\title[Elementary recursive algorithms]{Elementary recursive algorithms${\,}^\ast$}
\author{Yiannis N. Moschovakis}
\email{ynm@math.ucla.edu}
\urladdr{http://www.math.ucla.edu/\urltilde ynm}
\def\minutes{\ifnum\currminutes<10 0\number\currminutes%
\else\number\currminutes\fi}
\def\now{\today, \number\currhour:\minutes}
\def\bottomnote{\@ifnextchar
    [{\@xfootnotenext}{\xdef\@thefnmark{}\@footnotetext}}
\def\@xfootnotenext[#1]{\begingroup \csname c@\@mpfn\endcsname #1\relax
   \xdef\@thefnmark{\thempfn}\endgroup \@footnotetext}
\newcommand\eg{e.g., }
\newcommand\ie{i.e., }
\newcommand\cf{cf.\ }
\newcommand\ulp{\textup{(}}
\newcommand\urp{\textup{)}}
\newcommand\set{\textup{set}}
\newcommand\recto{\rightsquigarrow}
\newcommand\recursor{\mathfrak{r}}
\newcommand\refint{\textup{int}}
\newcommand{\where}{{\sf~where~}}
\newcommand\ycond{\textsf{cond}}
\newcommand\redto{\Rightarrow}
\newcommand\str[1]{\textup{\textbf{#1}}}
\newcommand\A{{\str{A}}}
\newcommand\B{{\str{B}}}
\newcommand\f[1]{\textup{\textsf{#1}}}
\newcommand{\inj}{\rightarrowtail}
\newcommand{\surj}{\mbox{\kern .2em$\rightarrow$\hspace*{-.8em}$\rightarrow$\kern .2em}}
\newcommand{\bij}{\mbox{\,$\rightarrowtail\kern -.8em \rightarrow$\,}}
\newcommand{\pfto}{\rightharpoonup}
\newcommand\bool{{\normalfont\texttt{boole}}}
\newcommand\ind{{\normalfont\texttt{ind}}}
\newcommand\boolset{\mathbb{B}}
\newcommand\booleans{{\{\true,\false\}}}
\def\true{{\textup{t\hskip -1.7pt t}}}
\def\false{{\textup{ff}}}
\newcommand\arity{\textup{arity}}
\newcommand\sort{{\textup{sort}}}
\newcommand\Lbrace{\Big\{}
\newcommand\Rbrace{\Big\}}
\newcommand\den{\textup{den}}
\newcommand\LS{{\str{L}^\ast}}
\newcommand\nil{\textup{nil}}
\newcommand\tail{\textup{tail}}
\newcommand\eq{{\textup{eq}}}
\newcommand\head{\textup{head}}
\newcommand\cons{\textup{cons}}
\def\tif{\textup{if }}
\def\tthen{\textup{then }}
\def\telse{\textup{else }}
\newcommand\nats{\mathbb{N}}
\newcommand\rationals{\mathbb{Q}}
\newcommand\reals{\mathbb{R}}
\newcommand{\converges}{\kern -.2em\downarrow\kern .1em}
\newcommand{\diverges}{\kern-.1em\uparrow\kern .05em}
\def\st{\,\mid\,}
\newcommand{\edf}{=_{\mathrm{df}}}
\newcommand\conj{{~\&~}}
\newcommand{\emptyproduct}{{\bf I}}
\newcommand{\oo}{\overline}
\newlength\wherelength
\newcommand\twhere{\settowidth{\wherelength}{$\textup{where}$}
\,\textup{where}\hskip-\wherelength%
\raisebox{1.3ex}{\rule{\wherelength}{.3pt}}\,}
\newcommand{\rwhere}{\textit{ where }}
\newcommand\nf{\textup{cf}}
\newcommand\iterator{\mathfrak{i}}
\newcommand\iterto{\rightsquigarrow}
\newcommand\hint[2][]{\textup{\textsc{Hint: }{#2}}}
\long\def\sol#1{}
\newcommand\size{{\textup{size}}}
\newcommand\yinput{\textup{input}}
\newcommand\youtput{\textup{output}}
\newcommand\etc{etc. }
\newcommand\inteq[1]{=^{#1}_{\textup{int}}}
\newcommand{\eedf}{\iff\kern -.65em_{\mathrm{df}}~}
\newcommand\ylength{\textup{length}}
\newcommand\id{\textup{id}}
\newcommand\vinj{{\textup{inj}}}
\newcommand\EQP{{\mathcal{E}}}
\newcommand\wwedge{\mbox{\rlap{$\bigwedge$}\hskip 3pt$\bigwedge$\,}}
\newcommand\prog[1]{\widetilde{#1}}
\def\plaindraft#1{%
\gdef\draftinfo{#1}
\let\oldps@headings\ps@headings%
\def\ps@headings{\oldps@headings%
\def\@oddfoot{\vbox to 0pt{\vskip 10pt\noindent \small\tt
#1 }}%
\def\@evenfoot{\vbox to 0pt{\vskip 10pt\noindent \small\tt
#1 }}}%
\AtBeginDocument{\ttdraft}%
\pagestyle{headings}}
\def\myplaindraft#1{%
\gdef\draftinfo{#1}
\let\oldps@myheadings\ps@myheadings%
\def\ps@myheadings{\oldps@myheadings%
\def\@oddfoot{\vbox to 0pt{\vskip 10pt\noindent \small\tt
#1 }}%
\def\@evenfoot{\vbox to 0pt{\vskip 10pt\noindent \small\tt
#1 }}}
\pagestyle{myheadings}}
\def\ttdraft{%
\let\oldps@firstpage\ps@firstpage%
\gdef\ps@firstpage{\oldps@firstpage
\def\@oddfoot{\vbox to 0pt{\vskip 10pt\noindent
\small\tt \draftinfo}}}}
\newcommand\problems{\protect\vspace*{10pt}\ysection{Problems for Section~\thesection}}
\numberwithin{equation}{section}
\theoremstyle{definition}
\newtheorem{prob}{\addtocounter{dprob}{1}\addtocounter{openprob}{1}\bfseries Problem}[section]
\newtheorem{dprob}{\addtocounter{prob}{1}\addtocounter{openprob}{1}\bfseries Problem}[section]
\newtheorem{note}{}[section]
\theoremstyle{plain}
\newtheorem{theorem}[note]{\bfseries Theorem}
\newtheorem{proposition}[note]{\bfseries Proposition}
\newtheorem{lemma}[note]{\bfseries Lemma}
\newtheorem{corollary}[note]{\bfseries Corollary}
\newcommand\ysection[1]{\subsubsection*{\textbf{#1}}}
\newcommand\ar{ARIC}
\newcommand\tsigma{\ddot{\sigma}}
\newcommand\tu[1]{\textup{(#1)}}
\newcommand\tPhi{\Phi^2}
\newcommand\fx{{\f w}}
\newcommand\fs{{\f u}}
\newcommand\ft{{\f v}}
\newcommand\ef[2]{{\textup{[#1-#2]}}}
\begin{document}
\mbox{}
\vspace*{-20pt}

\maketitle

\vspace*{-15pt}
\tableofcontents
\vspace*{-20pt}

\bottomnote{${}^\ast$A preliminary version of the results in this
article was included in an early draft of \cite{aric2019} (ARIC)
as an additional Chapter in Part 1. It was replaced by a brief
summary in Section 2H of the final, published version of \ar,
because it is not directly relevant to the main aim of that
monograph, which is to develop methods for deriving and justifying
robust lower complexity bounds for mathematical problems. There
are many references in the sequel to \ar\ and to older work by
several people, but I have included enough of the  basic
definitions and facts so that the statements and proofs of the new
results in Sections~\ref{cforms} and~\ref{decidability} stand on
their own.}

They run our lives, if you believe the hype in the news, but there is no
\looseness=-1
precise definition of \textit{algorithms} which is generally
accepted by the mathematicians, logicians and computer
scientists who create and study them.\footnote{Using imprecise formulations of the
\textit{Church-Turing Thesis} and vague references
to~\cite{church1935,church1936} and~\cite{turing1936}, it is
sometimes claimed naively that \textit{algorithms are Turing
machines}. This does not accord with the original formulations of
the Church-Turing Thesis, \cf the discussion in Section~1.1
of~\cite{ynm2014} (which repeats points well-known and understood
by those who have thought about this matter); and it is not a
useful assumption for algebraic complexity theory, \cf page~2 of
\ar.}
My main aims here are (first) to discuss briefly and point to the
few publications that try to deal with this foundational question
and (primarily) to outline in Sections~\ref{cforms}
and~\ref{decidability} simple proofs of two basic mathematical
results about the \textit{elementary recursive algorithms from
specified primitives} expressed by \textit{recursive} (McCarthy)
\textit{programs}.

\section{What is an algorithm?}
\label{algorithmssec}

With the (standard, self-evident) notation of Sections~1D and~\ref{eqlogic} of \ar, we
will focus on algorithms which compute partial functions and
(decide partial) relations
\begin{equation}
\label{sortedpfs}
f : A^n\pfto A_s \qquad(s\in\{\ind,\bool\}, A_\ind=A, A_\bool=\booleans)
\end{equation}
from the finitely many primitives of a (partial, typically infinite) $\Phi$-structure
\begin{equation}
\label{p-structure}
\A = (A, \{\phi^\A\}_{\phi\in\Phi})\quad(\phi\in\Phi, \phi^\A : A^{\arity(\phi)}\pfto A_{\sort(\phi)}).
\end{equation}

The most substantial part of this restriction is that it leaves
out algorithms with side effects and interaction, \cf the footnote
on page~\pageref{elementary} of \ar\ and the relevant Section 3B
in~\cite{ynmflr}.

Equally important is the restriction to \textit{algorithms from
specified primitives}, especially as the usual formulations of the
\textit{Church-Turing Thesis} suggest that the primitives of a
Turing machine are in some sense ``absolutely computable'' and
need not be explicitly assumed or justified. We have noted in the
Introduction to ARIC (and in several other places) why this is not
a useful approach; but in trying to understand
\textit{computability} and the meaning of the Church-Turing
Thesis, it is natural to ask whether there are absolutely
computable primitives and what those might be. See Sections 2 and
8 of~\cite{ynm2014} for a discussion of the problem and references
to relevant work, especially the eloquent analyses
in~\cite{gandy1980} and~\cite{kripke2000}.

There is also the restriction to \textit{first-order primitives},
partial functions and relations. This is necessary for the
development of a conventional theory of complexity, but recursion
and computability from higher type primitives have been
extensively studied: see~\cite{kleene1959},
\cite{ynmkechris-moschovakis} and~\cite{sacks1990} for the
higher-type recursion which extends directly the first-order
notion we have adopted in ARIC, and \cite{longley-normann2015} for a
near-complete exposition of the many and different approaches to
the topic.\footnote{See also~\cite{ynmflr}---which is about
recursion on structures with arbitrary monotone functionals for
primitives---and the subsequent~\cite{ynmbotik} where the
relevant notion of \textit{algorithm from higher-type
primitives} is modeled rigorously.}

Once we focus on algorithms which compute partial functions and
relations as in~\eqref{sortedpfs} from the primitives of a
$\Phi$-structure, the problem of modeling them by
set-theoretic  objects comes down basically to choosing between
\textit{iterative algorithms} specified by (classical) computation models as
in Section~\ref{cmodels} of \ar\ and \textit{elementary recursive
algorithms} expressed directly by recursive (McCarthy) programs; at least
this is my view, which I have explained and defended as best I can
in Section~3 of~\cite{ynmsicily}.

The first of these choices---that algorithms are iterative
processes---is \textit{the standard view},
explicitly or implicitly adopted  (sometimes
with additional restrictions) by most mathematicians and computer
scientists, including Knuth in Section 1.1 of his
classic~\cite{knuth1973}. More recently (and substantially more
abstractly, on arbitrary structures), this standard view has been
developed, advocated and defended by Gurevich and his
collaborators, \cf~\cite{gurevich1995,gurevich2000} and
\cite{gurevich2008}; see also \cite{zt2000} and \cite{duzi2014}.

I have made the second choice---that algorithms are directly
expressed by systems of mutual recursive definitions---and I have developed
and defended this view in several papers,
including~\cite{ynmsicily}. I will not repeat these arguments
here, except for the few remarks in the remainder of this Section
about the role that iterative algorithms play in the theory of
recursion and (especially) Proposition~\ref{recit}, which verifies
that iterative algorithms are ``faithfully coded'' by the
recursive algorithms they define, and so their theory is
not ``lost'' when we take recursive algorithms to be the basic
objects.

By the definitions in  Section~\ref{synsem} of \ar\ (reviewed in
Section~\ref{elptcond} below), a \textbf{recursive} (McCarthy)
$\Phi$-\textbf{program} is a syntactic expression
\begin{equation}
\label{recprogfirst}
E\equiv E_0\where\Lbrace\f p_1(\vec{\f
x}_1)= E_1, \ldots, \f p_K(\vec{\f x}_K)=E_K\Rbrace,
\end{equation}
where $E_0, E_1, \ldots, E_k$ are (pure, explicit) $\Phi\cup\{\f
p_1,\ldots,\f p_K\}$-terms and for every $i=1, \ldots, K$ all the individual
variables which occur in $E_i$ are included in the list $\vec{\f x}_i$ of distinct
individual variables; and an \textbf{extended program} is a pair
\begin{equation}
\label{recprogext}
(E,\vec{\f x}) \equiv E(\vec{\f x})\equiv E_0(\vec{\f x}) \where\Lbrace\f p_1(\vec{\f
x}_1)= E_1, \ldots, \f p_K(\vec{\f x}_K)=E_K\Rbrace
\end{equation}
of a program $E$ and a list of distinct individual variables $\vec{\f x}$ which includes
all the individual variables which occur in $E_0$.

\ysection{Recursive algorithms}

My understanding of \textit{the algorithm defined by $E(\vec{\f
x})$ in a $\Phi$-structure} $\A$ is that it calls for solving in
$\A$ the system of recursive equations in the \textit{body} of
$E(\vec{\f x})$ (within the braces $\{\cdots\}$) and then plugging
the solutions in its \textit{head} $E_0(\vec{\f x})$ to compute
$\den(\A,E(\vec x))$, the value of
the partial function computed by $E(\vec{\f x})$ on the input $\vec x$;
\textit{how} we find the canonical (least) solutions of this
system is not specified in this view by the algorithm from the
primitives of $\A$ defined by $E(\vec{\f x})$.

This ``lack of specificity'' of elementary recursive algorithms is
surely a weakness of the view I have adopted, especially as it
leads to some difficult problems.

\ysection{Implementations}

To compute $\den(\A,E(\vec x))$ for $\vec x\in A^n$, we might use
the method outlined in the proof of the Fixed Point
Lemma~\ref{lfp} or the recursive machine defined in
Section~\ref{cmodels} of \ar, or any one of several well known and
much studied \textit{implementations of recursion}. These are
iterative algorithms defined on structures which are (typically)
richer than $\A$ and they must satisfy additional properties
relating them to $E(\vec{\f x})$---we would not call any iterative
algorithm from any primitives which happens to compute the same
partial function on $A$ as $E(\vec{\f x})$ an implementation of
it; so to specify \textit{the implementations of a recursive
program} is an important (and difficult) part of this approach to
the foundations of the theory of algorithms, \cf~\cite{ynmsicily}
and (especially) \cite{ynmpaschalis2008} which proposes a precise
definition and establishes some basic results about it.

On the other hand, the standard view has some problems of its own:

\ysection{Recursion vs.\ iteration}

Iterators are defined rigorously in \ar\ and
Theorem~\ref{itrecthmA} gives a strong, precise version of the
slogan
\begin{equation}
\label{itisrec}
\textit{iteration can be reduced to recursion};
\end{equation}
\textit{on every structure $\A$, if $f:A^n\pfto A_s$ is computed
by an $\A$-explicit iterator, then $f$ is also computed by an
$\A$-recursive program}. The converse of~\eqref{itisrec} is not
true however: there are structures where some $\A$-recursive
relation is not \textit{tail recursive}, \ie it cannot be decided
by an iterator which is explicit in $\A$---it is necessary to add
primitives and/or to enlarge the universe of $\A$. Classical
examples include \cite{patterson-hewitt1970}
(Theorem~\ref{pathewhm} in \ar), \cite{lynch-blum1979} and (the
most interesting) \cite{tiuryn1989}.\footnote{See also
\cite{jones1999,jones2001} and
\cite{bhaskar2017int,bhaskar2017ext}.} The last
Chapter~\ref{algebra} of \ar\ discusses a (basic) problem from
\textit{algebraic complexity theory}, a very rich and active
research area in which the recursive representation of algorithms
is essential.

\ysection{Functional vs.\ imperative programming}

It is sometimes argued that the main difference between recursion
and iteration is a matter of ``programming style'', at least for
the structures which are mostly studied in complexity theory in
which every recursive partial function is \textit{tail recursive},
\ie computed by some $\A$-explicit iterator. This is not quite
true: consider, for example the classical \textit{merge-sort}
algorithm (Section~\ref{algexamples} of \ar) which sorts
strings from the primitives of the Lisp structure
\[
\LS = (L^\ast,\nil,\eq_{\nil},\head,\tail,\cons)
\]
over an ordered set $L$, \eqref{stringstr} in \ar; it is certainly
implemented by some $\LS$-explicit  iterator (as is every
recursive algorithm of $\LS$), but which one of these \textit{is}
the merge-sort? It seems that we can understand this classic
algorithm and reason about it better by looking at
Proposition~\ref{mergelemma} of \ar\ rather than focussing on
choosing some specific implementation of it.\footnote{See also
Theorem~\ref{sortinglb} of \ar\ for a precise formulation and
proof of the basic optimality property of the merge-sort.}

\ysection{Proofs of correctness}

In Proposition~\ref{mergesortprop} of \ar, we claimed the correctness of
the merge-sort---that it sorts---by just saying that

\begin{quotation}
\it The sort function satisfies the equation \ldots
\end{quotation}
whose proof was too trivial to deserve more than
the single sentence
\begin{quotation}
\it The validity of~\eqref{mergesort} is immediate, by induction on $|u|$.
\end{quotation}
To prove the correctness of an iterator that ``expresses the
merge-sort'', you must first design a specific one
and then explain how you can extract from all the
``housekeeping'' details necessary for the specification of
iterators a proof that what is actually being computed is the
sorting function; most likely you will trust that a formal version
of~\eqref{mergesort} of \ar\ is implemented correctly by some compiler or
interpreter of whatever higher-level language you are
using, as we did for the recursive machine.\smallskip

Simply put, whether correct or not, the view that algorithms are
faithfully expressed by systems of recursive equations typically
separates proofs of \textit{their correctness} and their basic
\textit{complexity properties} which involve only purely
mathematical facts from the relevant subject and standard results
of fixed-point-theory, from proofs of \textit{implementation
correctness} for programming languages which are ultimately
necessary but have a very different flavor.

\ysection{Defining mathematical objects in set theory}

Finally, it may be useful to review here briefly what it means to
\textit{define algorithms} according to a general (and widely
accepted, I think) naive view of what it means to \textit{define
mathematical objects}. This is discussed more fully in Section~3
of~\cite{ynmsicily}.\smallskip

One standard example is the ``definition'' (or ``construction'')
of \textit{real numbers} using (canonically) convergent sequences
of rational numbers: we set first
\begin{multline*}
x\in\textup{Cauchy}(\rationals) \iff x:\nats\to\rationals
\\
\conj (\forall k)(\exists n)(\forall i,j)
\Big[
i,j\geq n \implies |x(i)- x(j)|<\frac{1}{k+1}\Big],
\end{multline*}
next we put for $x,y\in\textup{Cauchy}(\rationals)$
\[
x\sim y \iff (\forall k)(\exists n)(\forall i>n)|x(i)-y(i)|<
\frac{1}{k+1},
\]
and finally we declare that $x,y\in\textup{Cauchy}(\rationals)$
\textit{represent} the same real number if $x\sim y$.

In general, a \textit{representation in set theory} of a
mathematical notion $P$ is a pair of a set (or class) $\mathcal
C_P$ of set-theoretic objects which represent the objects that
fall under $P$ and an equivalence condition $\sim_P$ on $\mathcal
C_P$ which models the identity relation on them; and the
representation is \textit{faithful}---and can be viewed as a
\textit{definition of $P$ in set theory}---to the extent that the
relations on $P$-objects that we want to study are those
which are equivalent to the $\sim_P$-invariant properties of the
objects in $\mathcal C_P$.

Now, number theorists could not care less about such
``definitions'' of real numbers and they were happily
investigating the existence and properties of rational, algebraic
and transcedental numbers for more than two thousand years before
they were given in the 1870s. Part of the reason for giving them
was to argue for adopting set theory as a ``foundation'' (a
``universal language'') for all of mathematics and to apply set
theoretic methods to algebraic number theory;\footnote{Most
spectacular of these was Cantor's proof that \textit{transcedental
numbers exist} by a counting argument much simpler than
Liouville's original proof---the first ``killer application'' of
set theory.} but this is not the main point---some people would
use category theory today and argue for its superiority over set
theory as both a foundation and a tool for applications. The main
point of looking for such ``definitions'' is to identify
\textit{the fundamental, characteristic properties of a
mathematical notion}, which, to repeat,  should be the
$\sim_P$-invariant properties of the set-theoretic objects that
model the notion.\smallskip

Another, fundamental (and much more sophisticated) example of this
process of constructing faithful modelings of mathematical notions
is the identification in~\cite{kolmogorov1933} of real-valued
\textit{random variables} with measurable functions
$X:\Omega\to\reals$ on a probability space, under various
equivalence relations.\ynm{equal, equal almost surely, or equal in distribution}

\section{Equational logic of  partial terms with conditionals}
\label{elptcond}

With the definitions in Section 1A of \ar, a \textit{partial function}
\[
f: X \pfto W
\]
with \textit{input set} $X$ and \textit{output
set} $W$ is an (ordinary) function $f:D_f\to W$ on some subset
$D_f\subseteq X$, the \textit{domain of convergence} of $f$. We
write
\[
f(x)\converges \eedf x\in D_f, \quad f(x)\diverges \eedf x\notin D_f
\qquad (x\in X)
\]
and most significantly, for $f,g : X\pfto W$ and $x\in X$,
\[
f(x) = g(x) \eedf \Big(f(x)\converges\conj g(x)\converges \conj f(x)=g(x)\Big)
\text{ or }\Big(f(x)\diverges \conj g(x)\diverges\Big).
\]
This relation between partial functions $f,g:X\pfto W$ with the
same input and output sets is sometimes called \textit{Kleene's
strong equality} between ``partial terms'', but there is nothing
partial about it: for any two $f,g: X\pfto W$ and any $x\in X$,
the proposition $f(x)=g(x)$ is either  true or false.

A \textit{signature} or \textit{vocabulary} is a set $\Phi$ of
\textit{function symbols}, each with assigned
$\arity(\phi)\in\nats$ and $\sort(\phi)\in\{\ind,\bool\}$; and as
in~\eqref{p-structure}, a (two-sorted, partial)
$\Phi$-\textit{structure} is a pair
\begin{equation}
\label{Phistr}
\A = (A, \{\phi^\A\}_{\phi\in\Phi})\quad(\phi\in\Phi, \phi^\A : A^{\arity(\phi)}\pfto A_{\sort(\phi)})
\end{equation}
of a (typically infinite) \textit{universe} $A$ and an
interpretation of the function symbols which matches their formal
arities and sorts, \ie for each $\phi\in\Phi$,
\[
\phi^\A : A^n \pfto A_s\qquad (n = \arity(\phi), s=\sort(\phi), A_\ind=A,
A_\bool=\{\true,\false\}).
\]

An \textit{expansion} of a $\Phi$-structure $\A$ in~\eqref{Phistr} is any $\Phi\cup\Psi$-structure
\begin{equation}
\label{expansion}
(\A,\Psi^\A)=(A,\{\phi^\A\}_{\phi\in\Phi}, \{\psi^\A\}_{\psi\in\Psi})
\end{equation}
on the same universe $A$ with $\Phi\cap\Psi=\emptyset$.

\ysection{Syntax}
\label{syntax}

The (pure, explicit) \textit{$\tPhi$-terms} are defined
by the \textit{structural recursion}
\begin{multline*}
F :\equiv  \true\mid \false \mid \f v_i \mid \f p^{s,n}(F_1,\ldots,F_n)\\
\mid \phi(F_1,\ldots,F_{\arity(\phi)})\mid
\tif F_0~\tthen F_1~\telse F_2,
\end{multline*}
where $\f v_0, \f v_1, \ldots$ is a fixed  list of
\textit{individual variables}; for each $s\in\{\ind,\bool\}$ and
each $n\in\nats$, $\f p^{s,n}_0, \f p^{s,n}_1, \ldots$ is a fixed
list of (partial) function variables of sort $s$ and arity $n$;
each term is assigned a sort in the natural way; and a
\textit{Parsing Lemma} (like Problem~x1E.1 of \ar) justifies the
standard method of defining functions on these terms by structural
recursion.\footnote{See Problem~\ref{fulltermdef} for a detailed
statement and proof of this. We just write $\phi$ for $\phi(~)$ when
$\arity(\phi)=0$, $\f p^{s,0}$ for $\f p^{s,0}(~)$  and we do not allow variables
over the set $\boolset=\{\false,\true\}$ of truth values, \cf
footnote 9 on p.\ 37 of \ar.}\smallskip

A $\Phi$-\textbf{term} is a $\tPhi$-term which has no function variables and a
$\Phi\cup\{\f p_1, \ldots, \f p_K\}$-\textbf{term} is a $\tPhi$-term whose function
variables are all in the list $\f p_1, \ldots, \f p_K$; these terms are
naturally interpreted in expansions $(\A,p_1,\ldots,p_K)$ of
$\Phi$-structures which interpret each $\f p_i$ by some $p_i$ of the correct sort and arity.\smallskip

An \textbf{extended} $\tPhi$-term is a pair
\[
(F, \vec{\f x}) \equiv F(\vec{\f x})
\equiv F(\f x_1,\ldots, \f x_n)
\]
of a $\tPhi$-term $F$ and a finite list of distinct
individual variables which includes all the  individual variables that occur
in $F$. The notation  provides a simple way to deal with substitutions,
\begin{multline*}
F(E_1,\ldots,E_n) :\equiv F\{\f x_1:\equiv E_1, \ldots, \f x_n :\equiv E_n\}\\
(F(\vec{\f x}) \text{ an extended term, $E_1,\ldots,E_n$ terms of sort $\ind$}).
\end{multline*}

\ysection{Semantics}

The \textit{denotation} (or \textit{value}) $\den((\A,\vec p),
F(\vec x))$ in a $\Phi$-structure $\A$ of each extended
$\Phi\cup\{\vec{\f p}\}$-term $F(\vec{\f x})$ for
given values $\vec p, \vec x$ of its variables is defined by the
usual (compositional) structural recursion on the definition of terms: skipping the
$(\A,\vec p)$ part (which remains constant),
\begin{gather*}
\den(\true(\vec x)) = \true,~~\den(\false(\vec x)) = \false, ~~\den(\f x_i(\vec x)) = x_i,
\\
\den(\f p_i(F_1,\ldots,F_n)(\vec x)) = p_i(\den(F_1(\vec x)), \ldots, \den(F_n(\vec x))),\\
\den(\phi(F_1,\ldots,F_{\arity(\phi)})(\vec x)) = \phi^\A(\den(F_1(\vec x)), \ldots,
\den(F_{\arity(\phi)}(\vec x))),\\
\den(\tif F_0~\tthen F_1~\telse F_2(\vec x))= \tif \den(F_0(\vec x))~\tthen
\den(F_1(\vec x))~\telse \den(F_2(\vec x));
\end{gather*}
and we will also use standard model-theoretic notation,\label{models}
\begin{align*}
(\A,\vec p) \models F(\vec x)=G(\vec x) &\eedf \den((\A,\vec p),F(\vec x)) = \den((\A,\vec p),G(\vec x)),\\
\A \models F(\vec{\f x})=G(\vec{\f x}) &\eedf
(\forall \vec p, \vec x)\Big((\A,\vec p) \models F(\vec x)=G(\vec x\Big)\\
\models F(\vec{\f x})=G(\vec{\f x})
&\eedf (\forall \A)\Big(\A\models F(\vec{\f x})=G(\vec{\f x})\Big).
\end{align*}

\problems

\begin{prob}
\label{fulltermdef}

A set $\mathcal T$ of pairs $(F,s)$ is closed under \textit{term formation} if
\begin{multline*}
(\true,\bool), (\false,\bool)\in\mathcal T,
\text{ for all }i,  (\f v_i,\ind)\in \mathcal T,\\
\text{for all }\phi\in\Phi, \Big((F_1,\ind), \ldots, (F_{\arity(\phi)}, \ind)\in\mathcal T\hspace*{3cm}\mbox{}\\
\mbox{}\hspace*{3cm}\implies (\phi(F_1,\ldots,F_{\arity(\phi)}),\sort(\phi))\in\mathcal T\Big),\\
\text{for all }\f p^{s,n}_i,
\Big((F_1,\ind), \ldots, (F_n, \ind)\in\mathcal T\hspace*{3cm}\mbox{}\\
\mbox{}\hspace*{3cm}\implies (\f p^{s,n}_i(F_1,\ldots,F_n),s)\in\mathcal T\Big)\\
\text{and }\Big((F_1,\bool), (F_2,s), (F_3,s)\in\mathcal T
\implies (\tif F_1~\tthen F_2~\telse F_3,s)\in \mathcal T\Big),
\end{multline*}
where we view $\true,\false$ and $\f v_i$ as \textit{strings of
symbols} of length $1$; and a string $F$ is a \textit{pure, explicit
$\tPhi$-term of sort $s$} if the pair $(F,s)$ belongs to every set
$\mathcal T$ which is closed under term formation.

Formulate a \textit{Parsing Lemma} for pure, explicit $\tPhi$-terms (as in Problem~x1E.1 of \ar\
for terms without function variables) and
give a complete proof of it using this precise definition.
\end{prob}

\section{Continuous, pure recursors}
\label{contrec}

For any two sets $X,W$, a (continuous, pure) \textit{recursor}
on $X$ to $W$ is a tuple
\begin{equation}
\label{recursor}
\alpha = (\alpha_0,\ldots,\alpha_K) : X\recto W,
\end{equation}
such that for suitable sets
$Y^\alpha_1,W^\alpha_1,\ldots,Y^\alpha_K,W^\alpha_K$,\footnote{For the definitions
of \textit{monotone} and \textit{continuous} functionals see Section~1A of \ar.}
\begin{align*}
\alpha_0 : X\times (Y^\alpha_1\pfto W^\alpha_1)\times\cdots\times
(Y^\alpha_K\pfto W^\alpha_K)&\pfto W, \text{ and}\\
\alpha_i :(Y^\alpha_1\pfto W^\alpha_1)\times\cdots\times
(Y^\alpha_K\pfto W^\alpha_K)&\pfto W^\alpha_i\quad(1\leq i\leq K)
\end{align*}
are continuous functionals. We allow
$X=\emptyproduct=\{\emptyset\}$ (as on page~\pageref{emptyproduct} of \ar),
in which case, by our conventions,
$\alpha_0 : (Y^\alpha_1\pfto W^\alpha_1)\times\cdots\times
(Y^\alpha_K\pfto W^\alpha_K)\pfto W$.

The number $K$ is the \textit{dimension} of $\alpha$, $\alpha_0$
is its \textit{output} or \textit{head functional}, the poset
$(Y_1\pfto W_1)\times\cdots\times (Y_K\pfto W_K)$ is its
\textit{solution space}, and we allow $K=0$--in which case there
is no solution space and $(\alpha_0)$ is naturally identified with
the partial function $\alpha_0:X\pfto W$. With the notation
of~\eqref{wheredef} of \ar, $\alpha$ \textit{defines} (or
\textit{computes}) the partial function $\oo\alpha:X\pfto W$,
where
\begin{equation}
\label{recursorvalue}
\oo\alpha(x) = \alpha_0(x,\vec p)
\twhere\Lbrace p_1(y_1) =
\alpha_1(y_1,\vec p),\ldots,p_K(y_K)=\alpha_K(y_K,\vec p)\Rbrace.
\end{equation}
We can summarize this situation succinctly by writing
\begin{multline}
\label{refwhereform}
\alpha(x)=(\alpha_0,\ldots,\alpha_K)(x)
\\=
\alpha_0(x,\vec p)
\rwhere\Lbrace p_1(y_1) =
\alpha_1(y_1,\vec p),\ldots,p_K(y_K)=\alpha_K(y_K,\vec p)\Rbrace,
\end{multline}
where ``\textit{where}'' is now an operation which assigns to every tuple
of continuous functionals $(\alpha_0,\ldots,\alpha_K)$ (with suitable input and
output sets) a recursor.

\ysection{The recursor defined by an extended program}
\label{recprogram}
Every (deterministic) extended recursive $\Phi$-program
\[
E(\vec{\f x})\equiv E_0(\vec{\f x})
\where\Lbrace\f p_1(\vec{\f x}_1)= E_1, \ldots,
\f p_K(\vec{\f x}_K)=E_K\Rbrace
\]
(as in~\eqref{recprogext}) with dimension $K$, arity $n$ and sort
$s$ defines naturally on each $\Phi$-structure $\A$ the continuous
pure \textit{recursor on $A$} of sort $s$ and arity $n$
\begin{equation}
\label{defprogrecursora}
\recursor(\A,E(\vec{\f x}))=(\alpha_0,\alpha_1,\ldots,\alpha_K)
: A^n\recto A_s,
\end{equation}
where
\begin{equation}
\label{defprogrecursorb}
\begin{array}{rcl}
\alpha_0(\vec x,\vec p)&=&\den((\A,\vec p),E_0(\vec x)),\\
\alpha_i(\vec x_i,\vec p)&=&\den((\A,\vec p),E_i(\vec x_i))
\quad(1\leq i\leq K)
\end{array}
\end{equation}
or, with the notation in~\eqref{refwhereform},
\begin{multline}
\label{defprogrecursor1}
\recursor(\A,E(\vec{x})) =
\den((\A,\vec p),E_0(\vec x))\\
\rwhere \Lbrace p_1(\vec x_1) = \den((\A,\vec p),E_1(\vec x_1)),\\
\ldots,p_K(\vec x_K) = \den((\A,\vec p),E_K(\vec x_K))\Rbrace.
\end{multline}

It is immediate from the semantics of recursive programs on
page~\pageref{progsemantics} of \ar\ that the recursor of an extended  program
computes its denotation,
\begin{equation}
\label{reccorrect}
\oo{\recursor(\A,E(\vec{x}))} = \den(\A,E(\vec x))
\quad (\vec x\in A^n).
\end{equation}
\noindent\textbf{However}
\label{however}: \textit{$\recursor(\A,E(\vec{\f x}))$ does not typically model faithfully the
algorithm expressed by $E(\vec{\f x})$ on $\A$}, partly because it does not
take into account the \textit{explicit steps} which may be
required to computes $\den(\A, E(\vec x))$ and (more importantly)
because {\it it does not record the dependence of that
algorithm on the primitives of $\A$.} In the extreme case, if
$E\equiv E_0\where\{~\}$ is a program with empty body (an explicit $\Phi$-term), then
\[
\recursor(\A,E(\vec{\f x})) = ((\lambda \vec x)\, \den(\A,E(\vec x)))
\]
is a trivial recursor of dimension $0$, a partial function on
$A$---and it is the same for all explicit terms which define this
partial function from any primitives, which is certainly not
right. In the next Section~\ref{cforms} we will define the
\textit{intension} $\refint(\A,E(\vec{\f x}))$ of $E(\vec{\f x})$
in $\A$ which (we will claim) models faithfully the algorithm from
the primitives of $\A$ \textit{expressed} by $E(\vec{\f x})$. As
it turns out, however,
\[
\refint(\A,E(\vec{\f x})) = \recursor(\A,\nf(E(\vec{\f x})))
\]
for some extended program $\nf(E(\vec{\f x}))$ which is
canonically associated with $E(\vec{\f x})$, and so every
recursive algorithm of a structure $\A$ is $\recursor(\A,F(\vec{\f
x})))$ for some $F(\vec{\f x})$.

\ynm{\ysection{About implementations (2)}
Our understanding of the algorithm
expressed by a recursor $\alpha$ in~\eqref{refwhereform} is that
it calls for solving the system of recursive equations in its body
and then plugging the solutions in the head functional to compute
the value $ \oo\alpha(x) = \alpha_0(x,\oo p_1,\ldots,\oo p_K)$.
\textit{How} we find the canonical solutions of this system is not
specified by $\alpha$: we might use the method outlined in the
proof of the Fixed Point Lemma~\ref{lfp}, or the recursive machine
defined in Section~\ref{cmodels} if $\alpha=\recursor(\A,E(\vec{\f
x}))$ is the recursor of an extended program in a structure, or
any one of several well-known and much studied
\textit{implementations} of recursion. What \textit{the
implementations of pure recursors} are is an important part of
this approach to the foundations of the theory of algorithms,
\cf~\cite{ynmsicily} and (especially) \cite{ynmpaschalis2008}
which includes a precise definition and some basic results. We
will not go into this here, except for a few additional comments
on page~\pageref{implementations3}.}

\ysection{Strong recursor isomorphism}
\label{recisomorphism}
The solutions of the system of recursive equations in the body of a recursor
$\alpha$ as in~\eqref{refwhereform} do not depend on the order in
which these equations are listed and the known methods for
computing them also do not depend on that order in any
important way; so it is natural to identify recursors which differ
only in the order in which their bodies are enumerated, so that,
for example
\begin{multline*}
\alpha_0(x,p_1,p_2,p_3)\rwhere\Lbrace p_1(y_1)=\alpha_1(y_1,p_1,p_2,p_3),\\
\hspace*{2cm}p_2(y_2)=\alpha_2(y_2,p_1,p_2,p_3),~~
p_3(y_3) = \alpha_3(y_3,p_1,p_2,p_3)\Rbrace\\
=
\alpha_0(x,p_1,p_2,p_3) \rwhere\Lbrace p_3(y_3) = \alpha_3(y_3,p_1,p_2,p_3),\\
\hspace*{1cm}p_1(y_1)=\alpha_1(y_1,p_1,p_2,p_3),~~
p_2(y_2)=\alpha_2(y_2,p_1,p_2,p_3)\Rbrace.
\end{multline*}
\ynm{Detail for the example
We need $q_1 = p_3, q_2 = p_1, q_3 = p_2$, so take $\pi(1)=3, \pi(2)=1,
\pi(3) = 2$, with the inverse being $\rho(3)=1, \rho(1) = 2, \rho(2)=3$.
This then requires the equations
\begin{align*}
\beta_0(x,q_1,q_2,q_3) &=
\alpha_0(x,p_1,p_2,p_3) =
\alpha_0(x,q_{\rho(1)},q_{\rho(2)},q_{\rho(3)})\\
\beta_1(y_3,q_1,q_2,q_3) &= \alpha_3(y_3,p_1,p_2,p_3)
= \alpha_3(y_3,q_{\rho(1)},q_{\rho(2)},q_{\rho(3)})
\end{align*}
For these we need to have
\[
Y^\beta_1 = Y^\alpha_3 = Y^\alpha_{\pi(1)},
Y^\beta_2 = Y^\alpha_1 = Y^\alpha_{\pi(2)},
Y^\beta_3 = Y^\alpha_2 = Y^\alpha_{\pi(3)}
\]
so
\[
q_i : Y^\beta_i=Y^\alpha_{\pi(i)}\pfto W^\alpha_{\pi(i)}=W^\beta_i,
\text{ so }q_{\rho(i)} : Y^\alpha_i\pfto W^\alpha_i.
\]}
The precise definition of this equivalence relation is
a bit messy in the general case:
two recursors $\alpha, \beta : X\iterto W$ on the same input
and output sets are \textit{strongly
isomorphic}---or just \textit{equal}---if they have
the same dimension $K$ and there is a permutation $\pi :
\{1,\ldots,K\}\bij \{1,\ldots,K\}$ with inverse $\rho=\pi^{-1}$
such that
\begin{equation}
\label{recidentity}
\begin{array}{c}
Y^\beta_i = Y^\alpha_{\pi(i)}, \quad
W^\beta_i= W^\alpha_{\pi(i)} \quad (i=1,\ldots,K),\\
\beta_0(x,q_1,\ldots,q_K) = \alpha_0(x,q_{\rho(1)}, \ldots, q_{\rho(K)}),\\
\beta_i(y_{\pi(i)},q_1,\ldots,q_K)
=\alpha_{\pi(i)}(y_{\pi(i)},q_{\rho(1)}, \ldots, q_{\rho(K)}),~
(1\leq i\leq K).
\end{array}
\end{equation}

Directly from the definitions, strongly
isomorphic recursors $\alpha,\beta : X\iterto W$
compute the same partial function $\oo{\alpha}=\oo{\beta} : X\pfto
W$; the idea, of course, is that \textit{strongly isomorphic
recursors model the same algorithm}, and so we will simply write
$\alpha=\beta$ to indicate that $\alpha$ and $\beta$ are strongly
isomorphic. This view is discussed in several of the articles
cited above and we will not go into it here.\footnote{This finest
relation of recursor equivalence was introduced (for
monotone recursors) in~\cite{ynmbotik}, and more general, weaker
notions were considered in~\cite{ynmsicily,ynmeng} and
in~\cite{ynmpaschalis2008}.}\smallskip

For the recursors defined by recursive programs, this definition takes
the following form, where for any $\tPhi$-term $F$ and
sequences of distinct function and individual variables $\vec{\f q}, \vec{\f p},
\vec{\f y}, \vec{\f x}$ (satisfying the obvious sort and arity conditions),
\begin{multline}
\label{alph}
F\{\vec{\f q}:\equiv \vec{\f p}, \vec{\f y}:\equiv \vec{\f  x}\}\\
:\equiv \text{the result of replacing in $F$ every $\f q_i$ by $\f p_i$
and every $\f y_j$ by $\f x_j$}.
\end{multline}

\begin{lemma}
\label{recprogiso}

Two extended $\Phi$-programs
\begin{equation}
\label{recprogisodisplay}
\left\{
\begin{array}{rcl}
E(\vec{\f x})&\equiv& E_0(\vec{\f x})
\where\Lbrace\f p_1(\vec{\f x}_1)= E_1, \ldots,
\f p_K(\vec{\f x}_K)=E_K\Rbrace,\\
F(\vec{\f x})&\equiv& F_0(\vec{\f x})
\where\Lbrace\f q_1(\vec{\f y}_1)= F_1, \ldots,
\f q_{K'}(\vec{\f y}_{K'})=E_{K'}\Rbrace
\end{array}
\right.
\end{equation}
of the same sort and arity define the same recursor on a
$\Phi$-structure $\A$ exactly when they have the same number of
parts $(K'=K)$ and there is a permutation
$\pi:\{0,\ldots,K\}\bij\{0,\ldots,K\}$ with inverse
$\rho=\pi^{-1}$ such that $\pi(0)=0$ and for all $\vec p$ and $\vec x$,
\begin{equation}
\label{recprogisochar}
\begin{array}{rcl}
(\A,\vec p) &\models& E_0(\vec x)=
F_0\{\vec{\f q} :\equiv \rho(\vec{\f p})\}(\vec x),\\
(\A,\vec p) &\models&
E_i(\vec x_i)=
F_{\pi(i)}\{\vec{\f q} :\equiv \rho(\vec{\f p}),\vec{\f y}_i:\equiv \vec{\f x}_i\}(\vec x_i), \quad (i=1,\ldots,K),
\end{array}
\end{equation}
where $\rho(\f p_1,\ldots,\f p_K)= (\f p_{\rho(1)},\ldots,\f p_{\rho(K)})$.
\end{lemma}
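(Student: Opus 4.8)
We have two recursive programs $E(\vec{\f x})$ and $F(\vec{\f x})$, each defining a recursor on $\A$ via the construction in \eqref{defprogrecursora}–\eqref{defprogrecursorb}. We want to characterize when these two recursors are *strongly isomorphic* (equal) in the sense of \eqref{recidentity}, purely syntactically/denotationally by condition \eqref{recprogisochar}.

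**The approach.** This is essentially unwinding definitions. The recursor $\recursor(\A, E(\vec{\f x}))$ has head functional $\alpha_0(\vec x, \vec p) = \den((\A,\vec p), E_0(\vec x))$ and body functionals $\alpha_i(\vec x_i, \vec p) = \den((\A, \vec p), E_i(\vec x_i))$; similarly for $F$ with functionals $\beta_0, \ldots, \beta_{K'}$. The definition of strong isomorphism \eqref{recidentity} says the two recursors are equal iff $K = K'$ and there's a permutation $\pi$ matching solution-space components and satisfying the functional identities. So the proof is a translation: I expect to show that the abstract functional identities in \eqref{recidentity} become exactly the denotational identities \eqref{recprogisochar} once we substitute the denotation definitions.

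**Key steps.** First, I'd note that in the program setting the permutation $\pi$ of \eqref{recidentity} ranges over $\{1,\ldots,K\}$, but we need to handle the head component too — hence the extension to $\pi:\{0,\ldots,K\}\bij\{0,\ldots,K\}$ with $\pi(0)=0$; the constraint $\pi(0)=0$ just records that the head functional must correspond to the head functional. Second, I'd translate the solution-space matching $Y^\beta_i = Y^\alpha_{\pi(i)}$, $W^\beta_i = W^\alpha_{\pi(i)}$: since the $i$-th component of the solution space of $\recursor(\A,E(\vec{\f x}))$ is determined by the arity and sort of $\f p_i$, these equalities say the matched recursion variables $\f q_i$ and $\f p_{\pi(i)}$ agree in arity and sort — which is built into requiring the substitution in \eqref{recprogisochar} to be well-formed. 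Third, and this is the heart, I'd expand the head identity $\beta_0(x, \vec q) = \alpha_0(x, q_{\rho(1)}, \ldots, q_{\rho(K)})$: the left side is $\den((\A, \vec q), F_0(\vec x))$ and the right side is $\den((\A, q_{\rho(1)}, \ldots, q_{\rho(K)}), E_0(\vec x))$. A substitution lemma — replacing function variables by a permuted list commutes with denotation — converts the permuted-argument form on the right into $\den((\A, \vec p), F_0\{\vec{\f q} :\equiv \rho(\vec{\f p})\}(\vec x))$ after renaming the bound $\vec q$ back to $\vec p$, yielding exactly the first line of \eqref{recprogisochar}. The body identities are handled the same way, with the extra individual-variable substitution $\vec{\f y}_i :\equiv \vec{\f x}_i$ accounting for the fact that the $i$-th recursion equation of $E$ uses variables $\vec{\f x}_i$ while the matched equation of $F$ uses $\vec{\f y}_{\pi(i)}$.

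**The main obstacle.** The genuinely delicate point is bookkeeping the permutation and its inverse correctly — getting $\rho$ versus $\pi$ on the right indices, and checking that substituting $\vec{\f q} :\equiv \rho(\vec{\f p})$ into $F_{\pi(i)}$ reproduces exactly the permuted-argument denotation $\alpha_{\pi(i)}(y_{\pi(i)}, q_{\rho(1)}, \ldots, q_{\rho(K)})$. The worked marginal example in the excerpt (with $\pi(1)=3,\pi(2)=1,\pi(3)=2$) is precisely a sanity check for this index juggling, and I'd lean on it to verify the direction of the substitution. The underlying substitution/compositionality fact — that $\den((\A,\vec p), G\{\vec{\f q}:\equiv \rho(\vec{\f p})\}(\vec x)) = \den((\A, \rho(\vec p)), G(\vec x))$ — follows by a routine structural induction on $G$ from the compositional clauses for $\den$ given in the Semantics subsection, so I'd state it as a lemma (or cite the Parsing/substitution machinery of Problem~\ref{fulltermdef}) rather than prove it inline. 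Everything else is a direct, reversible matching of the two definitions, so the equivalence is an "if and only if" established by reading \eqref{recidentity} through the denotation clauses in both directions.
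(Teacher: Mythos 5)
Your proposal is correct and takes essentially the same route as the paper: the paper likewise reduces the lemma to the substitution/compositionality fact (stated as the hint to Problem~\ref{recprogisoprob} and proved by a routine structural induction on terms), then applies it to the parts $E_i$ and $F_{\pi(i)}$ with the permuted list of function variables to translate the strong-isomorphism identities into \eqref{recprogisochar}. The remaining work, which the paper dismisses as ``an exercise in managing messy notations,'' is precisely the $\pi$-versus-$\rho$ index bookkeeping you identify as the main obstacle.
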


\begin{proofplain}[\sc The proof] is an exercise in managing
messy notations and we leave it for Problem~\ref{recprogisoprob}.
\end{proofplain}

\ysection{\tu{Extended} program congruence}

Two programs $E$ and $F$ are \textit{congruent} if $F$ can be
constructed from $E$ by an alphabetic change (renaming) of the
bound individual and function variables in its parts (as
in~\eqref{alph}) and a permutation of the equations in the body of
$E$. Congruence is obviously an equivalence relation on programs,
congruent programs have the same free variables and we write\label{congruence}
\begin{align*}
E\equiv_c F&\eedf \text{ $E$ and $F$ are congruent},\\
E(\vec{\f x})\equiv_cF(\vec{\f y})
&\eedf \vec{\f x}\equiv \vec{\f y} \text{ and }E\equiv_c F.
\end{align*}
By Lemma~\ref{recprogiso}, congruent extended programs define equal
recursors in every structure $\A$, \ie for every extended program
$E(\vec{\f x})$ and every program $F$,
\begin{equation}
\label{congintequal}
E\equiv_c F \implies(\forall \A)[
\recursor(\A,E(\vec{\f x}))=\recursor(\A,F(\vec{\f x}))],
\end{equation}
but the converse fails, \cf~Problem~\ref{congintequalprob2}.

\ysection{More general recursors}
\label{monrecursors}
The definition of recursors we gave is basically the one in
\cite{ynmbotik}, except that we allowed there the parts $\alpha_i$
of $\alpha$ to be (monotone but) not continuous and to depend on
the input set $X$, \ie
\begin{equation}
\tag{$\star$}
\label{botikrec}
\alpha = (\alpha_0,\alpha_1,\ldots,\alpha_K) : X\recto W,
\end{equation}
such that for suitable sets $Y^\alpha_1,W^\alpha_1,\ldots,Y^\alpha_K,W^\alpha_K$,
\begin{align*}
\alpha_0 : (Y^\alpha_1\pfto W^\alpha_1)\times\cdots\times
(Y^\alpha_K\pfto W^\alpha_K)\times X&\pfto W, \text{ and}\\
\alpha_i :Y^\alpha_i\times(Y^\alpha_1\pfto W^\alpha_1)\times\cdots\times
(Y^\alpha_K\pfto W^\alpha_K)\times X&\pfto W^\alpha_i\quad(1\leq i\leq K).
\end{align*}
Allowing the parts to be discontinuous is necessary for the theory
of higher-type recursion which was the topic
of~\cite{ynmflr,ynmbotik}, but their dependence on the input set
$X$ is not: the simpler, present notion is easier to work with and
it includes the more general objects if we identify
$\alpha$ in~\eqref{botikrec} with
\[
\alpha' = (\alpha'_0,\alpha'_1,\ldots,\alpha'_K) : X\recto W
\]
on the sets $Y'_i = X\times Y_i, W'_i=W_i$ ($1\leq
i \leq K$) with
\begin{align*}
\alpha'_0(x,p_1,\ldots,p_K) &=
\alpha_0(\lambda y_1p(x,y_1),
\ldots,\lambda y_Kp(x,y_K),x),\\
\alpha'_i(x,y_i,p_1,\ldots,p_K)
&=\alpha_i(\lambda y_1p(x,y_1), \ldots,\lambda y_Kp(x,y_K),x)
\quad(1\leq i\leq K).
\end{align*}

\hyphenation{Mos-cho-va-kis}

Substantially more general notions of (monotone  and continuous)
recursors whose solution spaces are products of arbitrary complete
posets were introduced in~\cite{ynmsicily,ynmeng} and (in greater
detail) in~\cite{ynmpaschalis2008}; and it is routine to define
\textit{nondeterministic algorithms} along the same lines, using
the fixpoint semantics of nondeterministic programs on
page~\pageref{ndfpsemantics} of \ar.\smallskip

\problems

\begin{prob}
\label{recprogisoprob}

Prove Lemma~\ref{recprogiso}. \hint Check by a (routine)
\looseness=+1
induction on terms, that {\it for any two lists of distinct function variables $\vec{\f
p}\equiv \f p_1, \ldots, \f p_K$ and $\vec{\f q}\equiv \f
q_1,\ldots, \f q_K$ and individual variables $\vec{\f y}\equiv \f
y_1,\ldots,\f y_m$, $\vec{\f x}\equiv \f x_1,\ldots,\f x_m$, and
for every $\Phi\cup\{\vec{\f q}\}$-term $M$ whose free variables
are all in the list $\vec{\f y}$,}
\begin{multline*}
\text{if }\beta(\vec y,\vec q)=\den((\A,\vec q),M(\vec y)),\\
\text{then }\beta(\vec x,\vec p)
=\den((\A,\vec p), M\{\vec{\f q}:\equiv\vec{\f p}, \vec{\f y}:\equiv
\vec{\f x}\}(\vec x)),
\end{multline*}
assuming, of course, that the sorts and arities of the function variables
are such that the claim makes sense.

\sol{The claim is obvious at the base $M\equiv \f y_i$ when
\[
\beta(\vec y,\vec q)=y_i, \quad
\den((\A,\vec p),\f y_i\{\vec{\f y}:\equiv \vec{\f x}\}(\vec x))=x_i
=\beta(\vec x, \vec p).
\]
In the induction step, the only non-trivial case is when
$M\equiv \f q_i(M_1,\ldots,M_n)$, and for this case, we compute
(taking $n=1$ for simplicity and skipping the lists $\vec{\f y}$,
$\vec{\f x}$ which do not enter the argument). We have
\[
\beta(\vec y,\vec q) = \den((\A,\vec q), \f q_i(M_1)(\vec y))
= q_i(\den((\A,\vec q),M_1(\vec y)))
= q_i(\beta_1(\vec y,\vec q)),
\]
so that by the induction hypothesis
\[
\beta_1(\vec x,\vec p) =
\den((\A,\vec p),M_1\{\vec{\f q}:\equiv \vec{\f p}\}(\vec x)),
\]
and
\begin{multline*}
\den((\A,\vec p), \f q_i(M_1)\{\vec{\f q}:\equiv \vec{\f p}\}(\vec x))
=
p_i(\den((\A,\vec p),M_1\{\vec{\f q}:\equiv \vec{\f p}\}(\vec x))
\\
= p_i(\beta_1(\vec x, \vec p))=\beta(\vec x,\vec p).
\end{multline*}

To prove the Lemma, we apply this fact to the parts $E_i$ and
$F_{\pi(i)}$ of the two programs and the lists of function variables
$\vec{\f p}$ and $\rho(\vec{\f q})$.}
\end{prob}

\begin{prob}
\label{congintequalprob2}
Prove that for every explicit term $E$ of sort $\bool$,
\[
\models \tif E~\tthen E~\telse E = E
\]
and use this to define two extended programs $E(\f x), F(\f x)$ such that
for all $\A$, $\recursor(\A,E(\vec{\f x}))=\recursor(\A,F(\vec{\f x}))$
but $E\not\equiv_c F$.

\sol{The identity is trivial, taking cases for each $\sigma$ on
whether $\sigma(\A,E)\diverges$, $\sigma(\A,E)=\true$, or
$\sigma(\A,E)=\false$. For the counterexample, on any structure, take
\[
E\equiv\true\where\Lbrace p = p\Rbrace,
\quad
F\equiv \true\where\Lbrace p = \tif p~\tthen p~\telse p\Rbrace.
\]
}
\end{prob}

\section{Canonical forms and intensions}
\label{cforms}

In this section we will associate with each extended (recursive) $\Phi$-program
\begin{equation}
\label{recprog}
E(\vec{\f x}) \equiv E_0(\vec{\f x}) \where\Lbrace\f p_1(\vec{\f x}_1)= E_1, \ldots,
\f p_K(\vec{\f x}_K)=E_K\Rbrace
\end{equation}
a (unique up to congruence) \textit{canonical form} $\nf(E(\vec{\f x}))$,
so that the construction
\[
(\A,E(\vec{\f x}))\mapsto
\recursor(\A,\nf(E(\vec{\f x})))
\]
captures the algorithm expressed by $E(\vec{\f x})$
in any $\Phi$-structure $\A$. This theory of canonical
forms yields, in particular, a robust notion of \textit{the
elementary \tu{first-order} algorithms} of a structure $\A$.

\ysection{Some more syntax}

For the syntactic work we will do in this Section we need to enrich
and simplify the notation of \ar\ summarized in Section~\ref{elptcond}.

We introduce two function symbols $\ycond_s$ of arity $3$ and
sort $s$ ($\equiv\ind$ or $\bool$) and the
abbreviations
\[
\ycond_s(F_0,F_1,F_2) :\equiv \tif F_0 ~\tthen F_1~\telse F_2;
\]
this is semantically inappropriate because the conditionals do not define
(strict) partial functions, but it simplifies considerably the definition
of pure, explicit $\tPhi$-terms on page \pageref{syntax}
which now takes the form
\begin{equation}
\tag{Pure explicit terms}
F :\equiv \true \mid \false \mid \f v_i \mid \f c(F_1,\ldots,F_n),
\end{equation}
where $\f c$ is any $n$-ary function symbol $\phi$ in the
vocabulary $\Phi$, or an $n$-ary function variable $\f p^{s,n}_i$
or $\ycond_s$ (and $n=3$).\smallskip

\ysection{Set representations}
With each extended $\Phi$-program $E(\vec{\f x})$ as in~\eqref{recprog}, we associate its
\textit{set representation}
\begin{equation}
\label{setrep}
\set(E(\vec{\f x}))=
\Lbrace E_0(\vec{\f x}),~~\f p_1(\vec{\f x}_1)= E_1, \ldots,
~~\f p_K(\vec{\f x}_K)=E_K\Rbrace.
\end{equation}
Notice that $\set(E(\vec{\f x}))$ determines $E_0(\vec{\f x})$, its only member which is not an equation,
and by the definition of the congruence relation on  page~\pageref{congruence},
\[
\set(E(\vec{\f x}))=\set(F(\vec{\f x})) \implies E\equiv_c F;
\]
the converse implication fails, but the algorithm which computes
canonical forms naturally operates on these set representations of
extended terms, so having a notation for them simplifies greatly its
specification.

\ysection{Immediate and irreducible terms}

A term $I$ is \textit{immediate} if it is an individual variable
or a direct call to a recursive variable applied to individual
variables,\ynm{Checked this out in detail.}
\begin{equation}
\tag{Immediate terms}
\label{immediate}
I :\equiv \f v_i \mid \f p^{s,0} \mid \f p^{n,s}_i(\f u_1,\ldots,\f u_n);
\end{equation}
and a term $F$ is \textit{irreducible} if it is immediate, a Boolean
constant, or of the form $\f c(I_1,\ldots,I_l)$ with immediate
$I_1,\ldots, I_n$,\footnote{There are natural sort
restrictions in these definitions and assignments of sorts to
immediate and irreducible terms that we will suppress in the
notation, \cf Problem~\ref{immsorts}.}
\begin{equation}
\tag{Irreducible terms}
\label{irreducible}
F :\equiv I \mid \true\mid\false \mid \f c(I_1,\ldots,I_n)
\end{equation}
For example, $\f u, \f p(\f u_1,\f u_2,\f u_1)$ are immediate,
$\phi(\f u)$ and $\f p(\f u,\f q(\f v),\f r(\f u))$ are irreducible but not
immediate, and $\f p(u,\phi(\f v))$ is reducible.

Computationally, we think of immediate terms as ``generalized
variables'' which can be accessed directly, like array entries
$a[i], b[i,j,i]$ in some programming languages, and a term is
irreducible if it can be computed by direct (not nested) calls to primitives or
to recursive variables. Both of these properties of terms are
(trivially) preserved by alphabetic change of
variables~\eqref{alph},
\begin{multline}
\label{alphchange}
\textit{if $F$ is immediate \ulp irreducible\urp},\\
\textit{then $F\{\vec{\f q}:\equiv \vec{\f p}, \vec{\f y}:\equiv \vec{\f x}\}$
is also immediate \ulp irreducible\urp}.
\end{multline}

\ysection{Irreducible extended programs}

An extended program
\[
E(\vec{\f x}) \equiv E_0(\vec{\f x})\where\Lbrace\f p_1(\vec{\f x}_1)= E_1, \ldots,
\f p_K(\vec{\f x}_K)=E_K\Rbrace
\]
is \textit{irreducible} if $E_0, E_1,\ldots, E_K$ are all
irreducible terms. These are the extended programs which cannot be
reduced by the basic process of \textit{reduction}, to which we
turn next.

\ysection{Arrow reduction}
\label{reduction}

We first define the simplest reduction relation
\[
E(\vec{\f x})\xrightarrow{(\f p,j,\f q)} F(\vec{\f x}),
\quad \vec{\f x} \equiv (\f x_1, \ldots, \f x_n)
\]
on extended programs with the same list of $n$ free variable,
where $\f p, \f q$ are function variables and $j$ is a number. The
definition splits in two cases on the triple $(E,\f p,j)$ and it
helps to call a function variable $\f r$ \textit{fresh} if it is none of the function
variables $\f p_i$ in  the body of $E$.\smallskip

(1) \textit{The body case}: {\it one of the equations in the body of $E$ is
\[
\f p(\vec{\f x}_{\f p})
= \f c(G_1,\ldots,G_{j-1},G_j,G_{j+1},\ldots, G_l)
\]
and the term $G_j$ is not immediate}. Put
\begin{multline*}
E(\vec{\f x})\xrightarrow{(\f p,j,\f q)} F(\vec{\f x})
\eedf \text{$\f q$ is fresh, $\arity(\f q)=\arity(\f p)$, $\sort(\f q) = \sort(\f p)$, and}\\
\set(F(\vec{\f x})) = \Big(\set(E(\vec{\f x}))\setminus \Lbrace\f p(\vec{\f x}_{\f p}) =
\f c(G_1,\ldots,G_{j-1},G_j,G_{j+1},\ldots,G_l)\Rbrace\Big)\\
\cup\Lbrace
\f p(\vec{\f x}_{\f p})
= \f c(G_1,\ldots,G_{j-1},\f q(\vec{\f x}_{\f p}),G_{j+1},\ldots, G_l),\quad
\f q(\vec{\f x}_{\f p}) = G_j\Rbrace.
\end{multline*}

(2) \textit{The head case}: {\it $\f p$ and $\f q$ are both fresh,
$\arity(\f q)=n$, $\sort(\f q)=\sort(E_0)$},
\[
E_0\equiv\f c(G_1,\ldots,G_{j-1},G_j,G_{j+1},\ldots,G_l)
\]
{\it and  the term $G_j$ is not immediate}. Put
\begin{multline*}
E(\vec{\f x}) \xrightarrow{(\f p,j,\f q)} F(\vec{\f x})\\
\eedf
\set(F(\vec{\f x})) = \Big(
\set(E(\vec{\f x}))\setminus\Lbrace\f c(G_1,\ldots,G_{j-1},G_j,G_{j+1},\ldots,G_l)\Rbrace\Big)\\
\cup\Lbrace \f c(G_1,\ldots,G_{j-1},\f q(\vec{\f x}),G_{j+1},\ldots, G_l), \quad
\f q(\vec{\f x}) = G_j\Rbrace.
\end{multline*}

Notice  that in the head case, the specific function variable $\f
p$ does not occur on the right-hand-side of the definition as it
does in the body case, it is used only as a ``marker'' that this
is the head case: we put
\begin{multline}
\label{marker}
\f p\sim_E\f p' \eedf \text{either }\f p\equiv\f p'\in \{\f p_1,\ldots,\f p_K\}
\quad \text{(the body case)}\\
\text{ or }
\{\f p, \f p'\}\cap\{\f p_1,\ldots,\f p_K\}=\emptyset\quad \text{(the head case)}.
\end{multline}

\begin{figure}[t]
\[
\begin{array}{cc}
\begin{array}{cclcl}
E(\vec{\f x})&:&
\begin{array}{rcl}
\f p(\vec{\f x}_{\f p}) &=& \f c(G_1,G_2,G_3)
\end{array}\\
&&\qquad\xrightarrow{(\f p,1, \f q)}
\\
F(\vec{\f x})&:&
\begin{array}{rcl}
\f p(\vec{\f x}_{\f p}) &=& \f c(\f q(\vec{\f x}_{\f p}),G_2,G_3)\\
\f q(\vec{\f x}_{\f p}) &=& G_1
\end{array}\\
&&\qquad\xrightarrow{(\f p, 3, \f q')}
\\
 H(\vec{\f x})&:&
\begin{array}{rcl}
\f p(\vec{\f x}_{\f p}) &=& \f c(\f q(\vec{\f x}_{\f p}),G_2,\f q'(\vec{\f x}_{\f p}))\\
\f q'(\vec{\f x}_{\f p}) &=& G_3\\
\f q(\vec{\f x}_{\f p}) &=& G_1
\end{array}
\end{array}
\quad&
\begin{array}{cclcl}
E(\vec{\f x})&:&
\begin{array}{rcl}
\f p(\vec{\f x}_{\f p}) &=& \f c(G_1,G_2,G_3)
\end{array}\\
&&\qquad\xrightarrow{(\f p,3, \f q')}
\\
F'(\vec{\f x})&:&
\begin{array}{rcl}
\f p(\vec{\f x}_{\f p}) &=& \f c(G_1,G_2,\f q'(\vec{\f x}_{\f p}))\\
\f q'(\vec{\f x}_{\f p}) &=& G_3
\end{array}\\
&&\qquad\xrightarrow{(\f p, 1, \f q)}
\\
H(\vec{\f x})&:&
\begin{array}{rcl}
\f p(\vec{\f x}_{\f p}) &=& c(\f q(\vec{\f x}_{\f p}),G_2,\f q'(\vec{\f x}_{\f p}))\\
\f q(\vec{\f x}_{\f p}) &=& G_1\\
\f q'(\vec{\f x}_{\f p}) &=& G_3
\end{array}
\end{array}
\end{array}
\]
\vspace*{-10pt}
\caption{Amalgamation example, the body case.}
\label{amalgbody}
\end{figure}

\begin{lemma}
\label{redfree}

\tu{1} If\/ $E(\vec{\f x}) \xrightarrow{(\f p,j,\f q)} F(\vec{\f x})$
and $E(\vec{\f x}) \xrightarrow{(\f p,j,\f q)} F'(\vec{\f x})$,
then $F\equiv_c F'$.\smallskip

\tu{2} If\/ $\f q'$ is fresh, $\arity(\f q')=\arity(\f q)$ and $\sort(\f q')=\sort(\f q)$, then
\begin{equation}
\label{redfree2}
E(\vec{\f x}) \xrightarrow{(\f p,j,\f q)} F(\vec{\f x})
 \implies E(\vec{\f x}) \xrightarrow{(\f p,j,\f q')} F\{\f q :\equiv \f q'\}(\vec{\f x}).
\end{equation}

\tu{3} If\/ $\f r'$ is fresh and $\f r'\not\equiv\f q$, then
\begin{equation}
\label{redfree3}
E(\vec{\f x}) \xrightarrow{(\f p,j,\f q)} F(\vec{\f x})
\implies E\{\f r:\equiv \f r'\}(\vec{\f x}) \xrightarrow{(\f p,j,\f q)} F\{\f r:\equiv \f r'\}(\vec{\f x}).
\end{equation}

\tu{4} An extended program $E(\vec{\f x})$ is irreducible if and only if there are
no $\f p,j,\f q, F$ such that
$E(\vec{\f x}) \xrightarrow{(\f p,j,\f q)} F(\vec{\f x})$.\smallskip

\tu{5} For every extended program $E(\vec{\f x})$, every program $F$, every triple
$(\f p,j,\f q)$ and every structure $\A$,
\[
E(\vec{\f x})\xrightarrow{(\f p,j,\f q)}F(\vec{\f x}) \implies
\den(\A,E(\vec{x}))=\den(\A,F(\vec{x}))\quad(\vec x\in A^n).
\]
\end{lemma}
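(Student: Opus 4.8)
The plan is to dispatch the four combinatorial parts (1)--(4) as bookkeeping about the set representation $\set(E(\vec{\f x}))$, on which $\xrightarrow{(\f p,j,\f q)}$ acts as a local surgery, and to isolate the one genuinely semantic part (5), which I would prove by a parametric fixed-point argument resting on the Fixed Point Lemma~\ref{lfp}. For (1): given $E(\vec{\f x})$ and the triple $(\f p,j,\f q)$, the case (body vs.\ head) is forced by whether $\f p$ occurs among the recursion variables of $E$; the relevant part is thereby pinned down (namely $E_0$ in the head case, or the unique body equation with left-hand side $\f p(\vec{\f x}_{\f p})$ in the body case), and its decomposition $\f c(G_1,\ldots,G_l)$ together with the distinguished argument $G_j$ is unique by the Parsing Lemma (Problem~\ref{fulltermdef}). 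Hence the displayed formula determines $\set(F(\vec{\f x}))$ outright, so $\set(F)=\set(F')$, and a fortiori $F\equiv_c F'$ by the implication recorded on page~\pageref{congruence}.

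For (2): since $\f q$ is fresh for $E$, its only occurrences in $F$ are the two I introduced, so renaming $\f q\to\f q'$ for a fresh $\f q'$ of the same arity and sort turns $\set(F)$ into exactly the set representation prescribed by $E\xrightarrow{(\f p,j,\f q')}F\{\f q:\equiv\f q'\}$, after checking the (unchanged) side conditions; this is \eqref{redfree2}. For (3): I would argue by cases on how $\f r$ meets the data of the reduction. The substitution $\{\f r:\equiv\f r'\}$ commutes with term decomposition and preserves immediacy by \eqref{alphchange}, hence with the surgery defining $\set(F)$, while the hypotheses ``$\f r'$ fresh'' and ``$\f r'\not\equiv\f q$'' are exactly what keeps $\f q$ fresh and distinct on the renamed side, so the triple $(\f p,j,\f q)$ still applies to $E\{\f r:\equiv\f r'\}$ and yields $F\{\f r:\equiv\f r'\}$, giving \eqref{redfree3}; the only delicate point is tracking the marker, which is vacuous in the head case since $\f p$ then occurs in neither $E$ nor $F$. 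For (4): a reduction fires at a part exactly when that part is an application $\f c(G_1,\ldots,G_l)$ with some non-immediate $G_j$, and by the classification \eqref{irreducible} this is possible for some part iff that part fails to be irreducible; choosing a fresh $\f q$ (and, in the head case, a fresh marker $\f p$) then produces a reduction whenever some part is reducible, which is the stated equivalence.

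The substantive step is (5), where I must show that ``naming a subterm by a fresh recursion variable'' is conservative for the least-fixed-point semantics. Both cases add one equation $\f q(\vec{\f x}_{\f p})=G_j$ (resp.\ $\f q(\vec{\f x})=G_j$ in the head case) for a fresh $\f q$ and replace the displayed occurrence of $G_j$ by the corresponding call to $\f q$. Because $\f q$ is fresh, $G_j$ does not contain $\f q$, so for every tuple $\vec p$ of values of the original recursion variables the new equation has the unique solution $q_{\vec p}$ given by $q_{\vec p}(\vec{\f x}_{\f p})=\den((\A,\vec p),G_j)$. Substituting $q_{\vec p}$ for $\f q$ in the modified $\f p$-equation (resp.\ head) turns it, by compositionality of $\den$, back into the original equation (resp.\ head) of $E$; hence the transformation $\vec p\mapsto(\text{the new }\vec p\text{ computed using }q_{\vec p})$ is literally the body functional of $E$ from the Fixed Point Lemma~\ref{lfp}. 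By the parametric fixed-point (Beki\v{c}) principle read off from that lemma, the least solution of the body of $F$ restricts on $\f p_1,\ldots,\f p_K$ to the least solution $\overline p$ of the body of $E$ and assigns $q_{\overline p}$ to $\f q$; plugging these into the head (which is $E_0$ up to the $\f q$-substitution) gives $\den(\A,F(\vec x))=\den(\A,E(\vec x))$ for all $\vec x\in A^n$.

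I expect (5) to be the main obstacle, and within it the single point needing care is the fixed-point alignment just described: everything hinges on the freshness of $\f q$, which guarantees both that $G_j$ is $\f q$-free (so $q_{\vec p}$ is well defined independently of $\f q$) and that no spurious coupling is introduced among the remaining equations. Parts (1)--(4) are routine once one reasons about $\set(E(\vec{\f x}))$ rather than the surface syntax, the only recurring nuisance being the freshness and distinctness side conditions on $\f q$ and the exact handling of the head-case marker $\f p$.
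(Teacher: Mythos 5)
Your proposal is correct and takes essentially the same route as the paper: the paper's own proof simply declares (1)--(3) trivial modulo notation, (4) immediate from the definition, and for (5) points to exactly the argument you give, namely ``a direct, fixpoint argument or the \emph{Head} and \emph{Beki\v{c}-Scott} rules,'' which your parametric fixed-point alignment (using freshness of $\f q$ to solve out the new equation uniquely and restrict the least solution of $F$'s body to that of $E$'s) carries out in detail. Your write-up merely supplies the bookkeeping on $\set(E(\vec{\f x}))$ and the fixpoint details that the paper leaves to the reader.
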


\begin{proof}
\ynm{If, for example, the body case applies and $\f q$ is any
variable with the correct arity and sort, we can take
\begin{multline*}
F \equiv E_0\where\Lbrace\f p_1(\vec{\f x}_1)= E_1, \ldots,\f p_{j-1}(\f x_{j-1}),
\f p_{j-1}(\f x_{j-1}),\ldots,\\
 \f p_K(\vec{\f x}_K)=E_K,
\f p(\vec{\f x}_{\f p})
= \f c(G_1,\ldots,G_{j-1},\f q(\vec{\f x}_{\f p}),G_{j+1},\ldots, G_l),~~
\f q(\vec{\f x}_{\f p}) = G_j\Rbrace.
\end{multline*}}
(1)~--~(3) are trivial, once we get through the notation and (4) follows immediately
by the definition. (5) is also quite easy, either by a direct, fixpoint argument or by applying the
\textit{Head} and \textit{Beki\v c-Scott} rules of Theorem~\ref{recidentities} of
\ar.
\end{proof}

\begin{figure}[t]
\[
\begin{array}{cc}
\begin{array}{cclcl}
E(\vec{\f x})&:&
\begin{array}{rcl}
 && \f c(G_1,G_2,G_3)
\end{array}\\
&&\qquad\xrightarrow{(\f p,1, \f q)}
\\
F(\vec{\f x})&:&
\begin{array}{rcl}
 \f c(\f q(\vec{\f x}),G_2,G_3)\\
\f q(\vec{\f x}) = G_1
\end{array}\\
&&\qquad\xrightarrow{(\f p, 3, \f q')}
\\
 H(\vec{\f x})&:&
\begin{array}{rcl}
\f c(\f q(\vec{\f x}),G_2,\f q'(\vec{\f x}))\\
\f q'(\vec{\f x}) = G_3\\
\f q(\vec{\f x}) = G_1
\end{array}
\end{array}
\qquad&
\begin{array}{cclcl}
E(\vec{\f x})&:&
\begin{array}{rcl}
\f c(G_1,G_2,G_3)
\end{array}\\
&&\qquad\xrightarrow{(\f p,3, \f q')}
\\
F'(\vec{\f x})&:&
\begin{array}{rcl}
\f c(G_1,G_2,\f q'(\vec{\f x}))\\
\f q'(\vec{\f x}) = G_3
\end{array}\\
&&\qquad\xrightarrow{(\f p, 1, \f q)}
\\
H(\vec{\f x})&:&
\begin{array}{rcl}
c(\f q(\vec{\f x}),G_2,\f q'(\vec{\f x})\\
\f q(\vec{\f x}) = G_1\\
\f q'(\vec{\f x}) = G_3
\end{array}
\end{array}
\end{array}
\]
\vspace*{-10pt}
\caption{Amalgamation example, the head case.}
\label{amalghead}
\end{figure}

The key property of the reduction calculus is the following, simple

\begin{lemma}[Amalgamation] If $(\f p\not\sim_E \f p' \text{ or } j\neq j'), ~~\f q\not\equiv\f q'$,
\label{perm}
\begin{equation}
\label{permhyp}
E(\vec{\f x})\xrightarrow{(\f p,j,\f q)} F(\vec{\f x})
\text{ and } E(\vec{\f x})\xrightarrow{(\f p',j', \f q')} F'(\vec{\f x}),
\end{equation}
then there is a program $H$ such that
\begin{equation}
\label{permconclusion}
E(\vec{\f x})\xrightarrow{(\f p,j,\f q)} F(\vec{\f x})\xrightarrow{(\f p',j',\f q')} H(\vec{\f x})
\text{ and }
E(\vec{\f x})\xrightarrow{(\f p',j',\f q')} F'(\vec{\f x})\xrightarrow{(\f p,j,\f p)} H(\vec{\f x}).
\end{equation}
\end{lemma}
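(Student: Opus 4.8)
The plan is to read this as the strong \emph{diamond property} (one-step local confluence) of arrow reduction and to prove it by the standard ``disjoint redexes commute'' argument, splitting on the disjunctive hypothesis. The one observation that makes everything routine is that an arrow reduction is a purely \emph{local} rewrite of the set representation $\set(E(\vec{\f x}))$: it selects one part (either the head or a single body equation), picks an argument position $j$ whose subterm $G_j$ is not immediate, replaces that $G_j$ \emph{in place} by a fresh call $\f q(\ldots)$, and appends the single new equation $\f q(\ldots)=G_j$. Since $\set$ is a set, the order in which equations are listed is immaterial, so to produce the common reduct $H$ it suffices to check that performing the two reductions in either order yields the \emph{same} set representation. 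Throughout I will lean on Lemma~\ref{redfree}: part~(1) for well-definedness up to $\equiv_c$, and parts~(2),(3) together with the marker remark~\eqref{marker} to rename auxiliary variables when freshness must be re-established.

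\textbf{Case A: $\f p\not\sim_E\f p'$.} Here the two reductions target different parts --- two distinct body equations, or one body equation and the head. They are then \emph{disjoint}: performing $(\f p,j,\f q)$ modifies only $\f p$'s part and appends $\f q$'s equation, so in $F(\vec{\f x})$ the part carrying $\f p'$ is untouched, its position-$j'$ subterm is still exactly $G'_{j'}$ and still not immediate, and $\f q'$ is still fresh because $\f q\not\equiv\f q'$; hence $F(\vec{\f x})\xrightarrow{(\f p',j',\f q')}H(\vec{\f x})$ is legitimate, and symmetrically $F'(\vec{\f x})\xrightarrow{(\f p,j,\f q)}H(\vec{\f x})$. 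Both orders delete the same two parts from $\set(E(\vec{\f x}))$ and adjoin the same four (the two rewritten parts and the two new equations), so the resulting set representations coincide and we may take $H$ to be this common program. At most one of the two can be a head reduction (two head reductions would be $\sim_E$), and since a head reduction's reduct does not depend on its marker by~\eqref{marker}, any incidental clash of that marker is harmless up to $\equiv_c$.

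\textbf{Case B: $\f p\sim_E\f p'$, hence $j\neq j'$.} This is the situation of Diagrams~\ref{amalgbody} and~\ref{amalghead}: both reductions rewrite the \emph{same} term $\f c(G_1,\ldots,G_l)$ (a common body right-hand side, or the common head), at the distinct positions $j$ and $j'$. Because the rewrite is in place, the arity $l$ is preserved and no positions shift, so after $(\f p,j,\f q)$ the subterm at position $j'$ is still $G_{j'}$ and still not immediate, while $\f q\not\equiv\f q'$ keeps $\f q'$ fresh; thus $(\f p',j',\f q')$ applies to $F(\vec{\f x})$. Carrying out $j$ then $j'$, and $j'$ then $j$, produces in each order the identical rewritten term $\f c(\ldots,\f q(\ldots)\text{ at }j,\ldots,\f q'(\ldots)\text{ at }j',\ldots)$ together with the two appended equations $\f q(\ldots)=G_j$ and $\f q'(\ldots)=G_{j'}$. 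As set representations these are equal, so the two reducts are one and the same $H$.

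\textbf{What needs care.} There is essentially no conceptual obstacle --- the content is exactly that independent redexes commute --- and the only work is the freshness and position bookkeeping: that in-place replacement never shifts the other redex, that the second new variable survives the first reduction (this is precisely where $\f q\not\equiv\f q'$ is used), and that the untouched subterm stays non-immediate. The sole point deserving a line of comment is to land on an \emph{identical} set representation in both orders rather than merely congruent programs; this follows because $\set$ ignores equation order and the two in-place rewrites commute, so a single common $H$ serves. The head-case markers require only the remark that, by~\eqref{marker}, they do not enter the reduct, so their possible coincidences are immaterial.
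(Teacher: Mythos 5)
Your proposal is correct and follows essentially the same route as the paper's proof: a case split on whether $\f p\sim_E\f p'$, with the disjoint case handled by noting the two rewrites touch different parts and commute, and the $\f p\sim_E\f p'$ (hence $j\neq j'$) case handled by observing that in-place replacement at distinct argument positions of the same term commutes. The paper merely sketches this (declaring the first case obvious and illustrating the second with Diagrams~\ref{amalgbody} and~\ref{amalghead}), whereas you spell out the freshness and set-representation bookkeeping explicitly, including the head-case marker subtlety covered by~\eqref{marker} --- a faithful, somewhat more complete rendering of the same argument.
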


\begin{proof}

This is obvious if $\f p\not\sim_E\f p'$, so that the two assumed
reductions operate independently on different parts of $E$ and
they give the same result if they are executed in either order.
For the proof in the more interesting case when $\f p\sim_E \f
p'$, we just put down as examples the relevant sequences of
needed replacements in Diagram~\ref{amalgbody} for the body case when $\f
p$ is ternary, $j=1$ and $j'=3$, and in Diagram~\ref{amalghead}
for the head case when $j=1$ and $j'=3$.  The general case is only
notationally more complex.
\end{proof}

\ysection{Extended program reduction}
Using now arrow reduction,
we define the \textbf{one-step} and \textbf{full} reduction
relations on extended programs by \begin{gather} \label{onestep}
E(\vec{\f x})\redto_1 F(\vec{\f x}) \eedf \text{ for some }\f p,
j, \f q,
E(\vec{\f x})\xrightarrow{(\f p,j,\f q)} F(\vec{\f x}),\\
\label{fullred}
E(\vec{\f x})\redto F(\vec{\f x}) \eedf E\equiv_c F\hspace*{5cm}\\\nonumber
\hspace*{1cm} \text{ or }(\exists F^1,\ldots,F^k)[
E(\vec{\f x})\redto_1 F^1(\vec{\f x})\redto_1\cdots\redto_1 F^k(\vec{\f x})
\text{ and }F^k\equiv_c F].
\end{gather}

It is immediate from Lemma~\ref{redfree} that for every structure $\A$, every extended
program $E(\vec{\f x})$ and every $F$,
\begin{equation}
\label{sameden}
E(\vec{\f x})\redto F(\vec{\f x}) \implies \den(\A,E(\vec{x}))=\den(\A,F(\vec{x}))\quad(\vec x\in A^n),
\end{equation}
but this is true even if we removed the all-important,
non-immediacy restrictions in the definition of reduction, by
Theorem~\ref{recidentities} of \ar. The claim is that
\begin{multline*}
\textit{if }E(\vec{\f x})\redto F(\vec{\f x}),\\
\textit{then $E(\vec{\f x})$ and $F(\vec{\f x})$ express the same
algorithm in every $\Phi$-structure},
\end{multline*}
but we will not get here into defending this naive view beyond
what we said in Sections~\ref{algorithmssec} and~\ref{contrec}.\smallskip

\textbf{Caution}. Reduction is a syntactic operation on extended recursive programs
which models (very abstractly) \textit{partial compilation},
bringing the mutual recursion expressed by the program to a useful
form before the recursion is implemented \textit{without
committing to any particular method of implementation}.
No real computation is done by it, and it does not
embody any ``optimization'' of the algorithm expressed by an
extended program, \cf Problem~\ref{noopt}.\smallskip

\ysection{Size} Let $\size(E)$ be \textit{the number of
occurrences of non-im\-me\-di\-ate proper subterms}\footnote{See
Problem~\ref{size0} for a rigorous definition and proofs of the
properties of $\size(E)$.} of some part of a program
$E$. For example, if
\begin{multline}
\label{expexample}
E \equiv\f p(\f x,\boxed{\phi_0(\f x)})
\where \\
\Lbrace \f p(\f x, \f y) = \tif \boxed{\text{test}(\boxed{\phi_0(\f x)},\f y)}
~\tthen \f y ~\telse \boxed{\f p(\f x,\boxed{\sigma(\f x,\f y)})}\Rbrace,
\end{multline}
with $\Phi=\{\phi_0,\textup{test},\sigma\}$,
then $\size(E)=5$, because   the proper subterms we count are the
boxed $\phi_0(\f x)$ (twice), $\text{test}(\phi_0(\f x),\f y)$,
$\sigma(\f x,\f y)$, and $\f p(\phi_0(\f x),\sigma(\f x,\f
y))$.\smallskip

From the definition of reduction, easily, an extended program $E(\vec{\f x})$
is irreducible exactly when $\size(E)=0$ and each one-step reduction lowers size by $1$,
so, trivially:

\begin{lemma}
\label{redlength}
If\/ $E(\vec{\f x})\redto_1 F^1(\vec{\f x})\redto_1 \cdots \redto_1 F^k(\vec{\f x})$ is
a sequence of one-step reductions starting with $E(\vec{\f x})$, then $k\leq\size(E)$;
and $F^k(\vec{\f x})$ is irreducible if and only if\/ $k=\size(E)$.
\end{lemma}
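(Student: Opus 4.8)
The plan is to reduce the statement to two bookkeeping facts about $\size$ and then finish with a one-line arithmetic induction. The two facts are: \textbf{(a)} an extended program $E(\vec{\f x})$ is irreducible if and only if $\size(E)=0$; and \textbf{(b)} each one-step reduction decreases size by exactly one, i.e. $E(\vec{\f x})\redto_1 F(\vec{\f x})$ implies $\size(F)=\size(E)-1$. Both are asserted in the paragraph preceding the lemma, so strictly I may just cite them; but since they carry all the content, I would want to have verified them first.

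Granting (a) and (b), the lemma is immediate. By (b) and a trivial induction on $i$, any chain $E(\vec{\f x})\redto_1 F^1(\vec{\f x})\redto_1\cdots\redto_1 F^i(\vec{\f x})$ satisfies $\size(F^i)=\size(E)-i$. Since $\size$ takes values in $\nats$, we must have $\size(E)-i\geq 0$ at each stage, and in particular $k\leq\size(E)$. For the second claim, combine this with (a): $F^k(\vec{\f x})$ is irreducible precisely when $\size(F^k)=0$, i.e. when $\size(E)-k=0$, i.e. when $k=\size(E)$.

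To establish (a), I would check that a single part (a term) is irreducible exactly when it has no non-immediate proper subterm: if a part has the form $\f c(I_1,\ldots,I_n)$ with immediate $I_1,\ldots,I_n$ (or is itself immediate or a Boolean constant), then all of its proper subterms are immediate, whereas if it contains a non-immediate proper subterm then a reduction applies to it. Summing over the parts $E_0,\ldots,E_K$ yields $\size(E)=0$ if and only if every part is irreducible if and only if $E(\vec{\f x})$ is irreducible; this is exactly Lemma~\ref{redfree}(4) phrased through $\size$.

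The only real work — and hence the main obstacle — is (b), which requires a careful count in each of the two cases of arrow reduction. Consider the body case, where an equation with right-hand side $\f c(G_1,\ldots,G_j,\ldots,G_l)$ and non-immediate $G_j$ is replaced by the pair $\f p(\vec{\f x}_{\f p})=\f c(G_1,\ldots,\f q(\vec{\f x}_{\f p}),\ldots,G_l)$ and $\f q(\vec{\f x}_{\f p})=G_j$. The occurrence of $G_j$, previously a non-immediate proper subterm, is replaced in the first equation by the immediate term $\f q(\vec{\f x}_{\f p})$, which is not counted; every non-immediate proper subterm occurring strictly inside $G_j$ is simply relocated to the new equation, where it remains a proper subterm and is still counted; and in the new equation $G_j$ itself is now a whole right-hand side, hence no longer a proper subterm and no longer counted. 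Thus exactly one counted occurrence — namely $G_j$ as a proper subterm — is removed and nothing is added, giving $\size(F)=\size(E)-1$. The head case is identical with $E_0$ in place of a body equation, the marker variable $\f p$ playing no role in the count. The subtlety to get right is precisely this accounting: $G_j$ stops being counted while its interior survives, which is what makes the net change $-1$ rather than $0$ or the larger drop one might naively expect.
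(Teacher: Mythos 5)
Your proof is correct and takes essentially the same route as the paper: the lemma is stated there as an immediate consequence of exactly your facts (a) and (b), which the paper asserts in the preceding paragraph ("an extended program $E(\vec{\f x})$ is irreducible exactly when $\size(E)=0$ and each one-step reduction lowers size by $1$") and whose rigorous verification it delegates to Problem~\ref{size0}. Your case-by-case count for (b)---$G_j$ stops being counted while the non-immediate occurrences inside it survive as proper subterms of the new part---is precisely the accounting in that problem's intended solution, so nothing is missing.
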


We illustrate the reduction process by constructing in
Figure~\ref{redexample} a maximal reduction sequence starting with $E(\f x)$
with $E$ the program in~\eqref{expexample}.

\ynm{\f p(\f x, \f y) = \tif \text{test}(\phi_0(\f x),\f y)
~\tthen \f y ~\telse \f p(\phi_0(\f x),\sigma(\f x,\f y))
}
\begin{figure}[t]
\gdef\Lbrace{\{}
\gdef\Rbrace{\}}
\[
\begin{array}{rcl}
E(\f x)&:&
\begin{array}{rcl}
\f p(\f x,\phi_0(\f x))&&\hspace*{-15pt}\where\Lbrace\\
\f p(\f x,\f y) &=&\tif \underline{\text{test}(\phi_0(\f x),\f y)}
~\tthen \f y ~\telse \f p(\f x,\sigma(\f x,\f y))\Rbrace
\end{array}\\
&&\qquad\xrightarrow{(\f p,1, \f q_1)~}
\\
F^1(\f x)&:&
\begin{array}{rcl}
\f p(\f x, \phi_0(\f x))&&\hspace*{-15pt}\where\Lbrace\\
\f p(\f x,\f y) &=& \tif \f q_1(\f x, \f y)~\tthen \f y ~\telse \f p(\f x,\underline{\sigma(\f x,\f y)}),\\
\f q_1(\f x,\f y) &=&\text{test}(\phi_0(\f x),\f y)\Rbrace
\end{array}\\
&&\qquad\xrightarrow{(\f p, 3, \f q_2)~}
\\
F^2(\f x)&:&
\begin{array}{rcl}
\f p(\f x, \phi_0(\f x))&&\hspace*{-15pt}\where\Lbrace\\
\f p(\f x,\f y) &=& \tif \f q_1(\f x, \f y)~\tthen \f y ~\telse \f q_2(\f x,\f y),\\
\f q_2(\f x,\f y) &=& \f p(\underline{\phi_0(\f x)},\sigma(\f x,\f y)),
\f q_1(\f x,\f y) = \text{test}(\phi_0(\f x),\f y)\Rbrace
\end{array}\\
&&\qquad\xrightarrow{(\f q_2,1,q_3)~}
\\
F^3(\f x)&:&
\begin{array}{rcl}
\f p(\f x, \phi_0(\f x))&&\hspace*{-15pt}\where\Lbrace\\
\f p(\f x,\f y) &=&  \tif \f q_1(\f x, \f y)~\tthen \f y ~\telse \f q_2(\f x,\f y),\\
\f q_2(\f x,\f y) &=& \f p(\f q_3(\f x, \f y),\sigma(\f x,\f y)),
\f q_3(\f x,\f y) = \phi_0(\f x),\\
\f q_1(\f x,\f y) &=&\text{test}(\underline{\phi_0(\f x)},\f y)\Rbrace
\end{array}
\\
&&\qquad\xrightarrow{(\f q_1,1,\f q_4)~}\\
F^4(\f x)&:&
\begin{array}{rcl}
\f p(\f x, \phi_0(\f x))&&\hspace*{-15pt}\where\Lbrace\\
\f p(\f x,\f y) &=&  \tif \f q_1(\f x, \f y)~\tthen \f y ~\telse \f q_2(\f x,\f y),\\
\f q_2(\f x,\f y) &=& \f p(\f q_3(\f x, \f y),\underline{\sigma(\f x,\f y)}),\\
\f q_3(\f x,\f y) &=& \phi_0(\f x),\\
\f q_1(\f x,\f y) &=&\text{test}(\f q_4(\f x, \f y),\f y),\\
\f q_4(\f x, \f y) &=& \phi_0(\f x)\Rbrace
\end{array}
\\
&&\qquad\xrightarrow{(\f q_2,2,\f q_5)~}\\
F^5(\f x)&:&
\begin{array}{rcl}
\f p(\f x, \underline{\phi_0(\f x)})&&\hspace*{-15pt}\where\Lbrace\\
\f p(\f x,\f y) &=&  \tif \f q_1(\f x, \f y)~\tthen \f y ~\telse \f q_2(\f x,\f y),\\
\f q_2(\f x,\f y) &=& \f p(\f q_3(\f x, \f y),\f q_5(\f x, \f y)),\\
\f q_5(\f x, \f y) &=& \sigma(\f x, \f y),\\
\f q_3(\f x,\f y) &=& \phi_0(\f x), ~~\f q_1(\f x,\f y) =\text{test}(\f q_4(\f x, \f y),\f y),\\
\f q_4(\f x, \f y) &=& \phi_0(\f x)\Rbrace
\end{array}
\\
&&\qquad\xrightarrow{(\f r,2,\f q_6)~}\\
F^6(\f x)&:&
\begin{array}{rcl}
\f p(\f x, \f q_6(x))&&\hspace*{-15pt}\where\Lbrace\\
\f p(\f x,\f y) &=&  \tif \f q_1(\f x, \f y)~\tthen \f y ~\telse \f q_2(\f x,\f y),\\
\f q_2(\f x,\f y) &=& \f p(\f q_3(\f x, \f y),\f q_5(\f x, \f y)),\\
\f q_5(\f x, \f y) &=& \sigma(\f x, \f y),\\
\f q_3(\f x,\f y) &=& \phi_0(\f x), ~~\f q_1(\f x,\f y) =\text{test}(\f q_4(\f x, \f y),\f y,)\\
\f q_4(\f x, \f y) &=& \phi_0(\f x), \f q_6(x) = \phi_0(\f x)\Rbrace
\end{array}
\end{array}
\]
\vspace*{-10pt}
\caption{A maximal reduction sequence for $E(\f x)$ in~\eqref{expexample}.}
\label{redexample}
\end{figure}

\begin{lemma}
\label{cfthmlemma}
For any extended program $E(\vec{\f x})$ and irreducible $F^1(\vec{\f x}), F^2(\vec{\f x})$,
\[
\Big(E(\vec{\f x})\redto F^1(\vec{\f x}) \conj E(\vec{\f x})\redto F^2(\vec{\f x})\Big)
\implies F^1\equiv_c F^2.
\]
\end{lemma}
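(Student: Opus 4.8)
The plan is to recognize this as the uniqueness-of-normal-forms half of a Newman-style confluence argument: reduction terminates (every one-step reduction strictly lowers $\size$, by Lemma~\ref{redlength}) and it is locally confluent up to congruence (the Amalgamation Lemma~\ref{perm}), so irreducible reducts are unique up to $\equiv_c$. Before starting I would record two routine facts that the definitions in~\eqref{onestep}--\eqref{fullred} and Lemma~\ref{redfree} supply for free: that $\size$ is invariant under congruence (renaming and reordering equations change no count of non-immediate subterms), and that $\redto$ is reflexive up to $\equiv_c$, transitive, and compatible with $\equiv_c$ — the last point using Lemma~\ref{redfree}(3) to push a congruence through a one-step reduction. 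Granting these, I would argue by induction on $\size(E)$.

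For the base case $\size(E)=0$, the program $E(\vec{\f x})$ is irreducible by Lemma~\ref{redfree}(4), so no one-step reduction issues from it; hence each hypothesis $E(\vec{\f x})\redto F^i(\vec{\f x})$ can only be the congruence clause of~\eqref{fullred}, giving $E\equiv_c F^i$ and therefore $F^1\equiv_c F^2$. For $\size(E)>0$ the terms $F^1,F^2$ are irreducible, hence of size $0$, hence not congruent to $E$ by congruence-invariance of $\size$; so each reduction $E(\vec{\f x})\redto F^i(\vec{\f x})$ must begin with a genuine step $E(\vec{\f x})\redto_1 G^i(\vec{\f x})$ followed by $G^i(\vec{\f x})\redto F^i(\vec{\f x})$. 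Write the two initial steps as $E(\vec{\f x})\xrightarrow{(\f p,j,\f q)}G^1(\vec{\f x})$ and $E(\vec{\f x})\xrightarrow{(\f p',j',\f q')}G^2(\vec{\f x})$.

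Now I would split on whether these two steps attack the same redex. If $\f p\sim_E\f p'$ and $j=j'$, then after using Lemma~\ref{redfree}(2) to align the fresh output variables, Lemma~\ref{redfree}(1) gives $G^1\equiv_c G^2$; since $\size(G^1)=\size(E)-1$ and both $F^1,F^2$ are irreducible reducts of $G^1$ (the second via $G^1\equiv_c G^2\redto F^2$ and compatibility of $\redto$ with $\equiv_c$), the induction hypothesis applied to $G^1$ yields $F^1\equiv_c F^2$. Otherwise the redexes differ, i.e.\ $\f p\not\sim_E\f p'$ or $j\neq j'$; renaming one fresh output by Lemma~\ref{redfree}(2) if necessary so that $\f q\not\equiv\f q'$, the Amalgamation Lemma~\ref{perm} furnishes a common $H(\vec{\f x})$ with $G^1(\vec{\f x})\redto_1 H(\vec{\f x})$ and $G^2(\vec{\f x})\redto_1 H(\vec{\f x})$. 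Fixing any irreducible $H^\ast(\vec{\f x})$ with $H(\vec{\f x})\redto H^\ast(\vec{\f x})$ (one exists by Lemma~\ref{redlength}), I would apply the induction hypothesis to each $G^i$, legitimate since $\size(G^i)=\size(E)-1$: from $G^1\redto F^1$ and $G^1\redto H^\ast$ it gives $F^1\equiv_c H^\ast$, and symmetrically $F^2\equiv_c H^\ast$, whence $F^1\equiv_c F^2$.

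The mathematical content is entirely in Lemmas~\ref{redlength} and~\ref{perm}; the main obstacle in writing this up is the bookkeeping around fresh variables and congruence. Concretely, I expect the fiddly points to be: checking that $\redto$ composes with $\redto_1$ and with $\equiv_c$ in the way used above; aligning the fresh output variables of the two branches so that the hypotheses of Lemma~\ref{perm} (notably $\f q\not\equiv\f q'$ and the matching sort and arity conditions) are literally met; and confirming that the ``same redex'' case really does collapse to a single reduct up to congruence in both the body and head subcases of $\sim_E$, where in the head subcase the marker variable must be tracked carefully since it does not occur in the resulting program. These are exactly the ``managing messy notations'' issues already flagged for Lemma~\ref{recprogiso}, and none should present a genuine difficulty.
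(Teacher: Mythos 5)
Your proposal is correct and takes essentially the same route as the paper's proof: induction on $\size(E)$, with termination supplied by Lemma~\ref{redlength}, local confluence by the Amalgamation Lemma~\ref{perm}, and Lemma~\ref{redfree}(2),(3) used to align fresh variables and push renamings through reduction sequences. The only differences are organizational---you fold the paper's $\size(E)=1$ basis and its three-way case split into a two-way split on whether the initial steps attack the same redex, which in fact also covers the trivial same-redex, same-fresh-variable case that the paper's case enumeration passes over in silence.
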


\begin{proofplain}
is by induction on $\size(E)$.\smallskip

\textit{Basis}, $\size(E)\leq 1$. If $\size(E)=0$ so that $E(\vec{\f x})$ is
irreducible, then the hypothesis gives immediately $E\equiv_c F^1$ and
$E\equiv_c F^2$, so $F^1\equiv_c F^2$.\smallskip

If $\size(E)=1$, then there is exactly one part $E_i$ of $E$ which
can activate a one-step reduction. In the head case, the definition gives
\begin{multline*}
\set(F^1(\vec{\f x})) = \Big(
\set(E(\vec{\f x}))\setminus\Lbrace\f c(G_1,\ldots,G_{j-1},G_j,G_{j+1},\ldots,G_l)\Rbrace\Big)\\
\cup\Lbrace \f c(G_1,\ldots,G_{j-1},\f q(\vec{\f x}),G_{j+1},\ldots, G_l), ~~
\f q(\vec{\f x}) = G_j\Rbrace
\end{multline*}
with a fresh function variable $\f q$ and the same equation for
$\set(F^2(\vec{\f x}))$ with a (possibly) different fresh function variable
$\f q'$; but then $F^1\equiv_c F^2$ by~(1) of
Lemma~\ref{redfree}. The same argument works in the body case.\smallskip

\textit{Induction Step}, $k=\size(E)\geq 2$. The hypothesis and
Lemma~\ref{redlength} supply reduction sequences
\begin{multline}
\tag{$\ast$}
\label{star1}
E(\vec{\f x})\redto_1 F^{1,1}(\vec{\f x}) \redto_1 \cdots \redto_1F^{1,k}(\vec{\f x})\equiv F^1(\vec{\f x}),
\\
E(\vec{\f x})\redto_1 F^{2,1}(\vec{\f x}) \redto_1 \cdots \redto_1F^{2,k}(\vec{\f x})\equiv F^2(\vec{\f x}),
\end{multline}
so focussing on the first one-step reductions in the two hypotheses,
there are triples $(\f p,j,\f q)$ and $(\f p',j',\f q')$ such that
\begin{equation}
\label{keyconcl}
E(\vec{\f x})\xrightarrow{(\f p,j,\f q)}F^{1,1}(\vec{\f x})
\text{ and }E(\vec{\f x})\xrightarrow{(\f p',j',\f q')} F^{2,1}(\vec{\f x}).
\end{equation}
We consider three cases on how this may arise:\smallskip

\textit{Case 1}: $(\f p\not\sim_E \f p' \text{ or } j\neq j')$ and $\f q\not\equiv\f q'$.
The Amalgamation Lemma~~\ref{perm} applies in this case and supplies a program $H$ such that
\begin{equation}
\tag{$\ast\ast$}
F^{1,1}(\vec{\f x})\redto_1 H(\vec{\f x}) \text{ and }F^{2,1}(\vec{\f x})\redto_1 H(\vec{\f x}).
\end{equation}
We fix a (maximal) reduction sequence
\begin{equation}
\tag{$\ast\ast\ast$}
H(\vec{\f x}) \redto_1 H^3(\vec{\f x})\redto_1\cdots\redto_1 H^k(\vec{\f x})
\text{ (with an irreducible $H^k(\vec{\f x})$)}
\end{equation}
and notice that from the reductions in the three starred displays,
\[
F^{1,1}(\vec{\f x})\redto F^1(\vec{\f x}),~~ F^{1,1}(\vec{\f x})\redto H^k(\vec{\f x}),
\qquad F^{2,1}(\vec{\f x})\redto F^2(\vec{\f x}),~~ F^{2,1}(\vec{\f x})\redto H^k(\vec{\f x});
\]
but $\size(F^{1,1})=\size(F^{2,1})=k-1$, so the Induction Hypothesis applies and yields the
required $F^1\equiv_c H^k \equiv_c F^2$.\smallskip

\textit{Case 2}, $(\f p\not\sim_E \f p' \text{ or } j\neq j')$ but $\f q\equiv\f q'$, so the first
one-step reductions supplied by the Hypothesis are
\[
E(\vec{\f x})\xrightarrow{(\f p,j,\f q)}F^{1,1}(\vec{\f x})
\text{ and }E(\vec{\f x})\xrightarrow{(\f p',j',\f q)} F^{2,1}(\vec{\f x}).
\]
If $\f q'$ is fresh but with the same arity and sort as $\f q$, then (2) of Lemma~\ref{redfree} gives
\[
E(\vec{\f x})\xrightarrow{(\f p',j',\f q')} F^{2,1}\{\f q:\equiv \f q'\}(\vec{\f x});
\]
now (3) of Lemma~\ref{redfree} gives
\[
F^{2,1}\{\f q:\equiv \f q'\}(\vec{\f x}) \redto_1 \cdots \redto_1 F^{2,k}\{\f q:\equiv \f q'\}(\vec{\f x})
\]
so that Case 1 applies and gives
\[
F^1\equiv_c F^2\{\f q:\equiv \f q'\};
\]
but $F^2\{\f q:\equiv \f q'\}\equiv_c F^2$, which completes the argument in this case.\smallskip

\textit{Case 3}, $\f p\sim\f p', j=j'$ and $\f q\not\equiv \f q'$. The first one-step reductions
supplied by the Hypothesis now are
\[
E(\vec{\f x})\xrightarrow{(\f p,j,\f q)}F^{1,1}(\vec{\f x})
\text{ and }E(\vec{\f x})\xrightarrow{(\f p,j,\f q')} F^{2,1}(\vec{\f x})
\]
where $\f q$ and $\f q'$ are distinct but have the same  sort and arity; so
\[
F^{1,1}\{\f q:\equiv \f r\}\equiv_c F^{2,1}\{\f q':\equiv \f r\}
\]
with any fresh $\f r$; and then  by the Induction Hypothesis and (3) of Lemma~\ref{redfree} as above,
\[
F^{1,k}\equiv_c F^{1,k}\{\f q:\equiv \f r\}\equiv_c F^{2,k}\{\f q':\equiv \f r\}\equiv_c F^2,
\]
which is what we needed to prove.
\end{proofplain}

\begin{theorem}[Canonical forms]
\label{cfthm}

Every extended program $E(\vec{\f x})$ is reducible to a unique up to congruence irreducible
extended program $\nf(E(\vec{\f x}))$, its canonical form.\smallskip

In detail: with every extended program $E(\vec{\f x})$, we can associate an
extended  program $\nf(E(\vec{\f x}))$ with the following properties:
\begin{ritemize}{(3)}

\item[(1)] $\nf(E(\vec{\f x}))$ is irreducible.

\item[(2)] $E(\vec{\f x})\redto\nf(E(\vec{\f x}))$.

\item[(3)] If $F(\vec{\f x})$ is irreducible and $E(\vec{\f x})\redto F(\vec{\f x})$,
then $F(\vec{\f x})\equiv_c\nf(E(\vec{\f x}))$.
\end{ritemize}

It follows that for all $E(\vec{\f x}),F(\vec{\f x})$,
\[
E(\vec{\f x})\redto F(\vec{\f x}) \implies \nf(E(\vec{\f x}))\equiv_c\nf(F(\vec{\f x})).
\]
\end{theorem}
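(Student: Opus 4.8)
The plan is to establish the three properties (1)--(3) directly and then read off the final displayed implication as a corollary. The machinery is already in place: Lemma~\ref{redlength} guarantees that every maximal reduction sequence from $E(\vec{\f x})$ has length exactly $\size(E)$ and terminates in an irreducible program, and Lemma~\ref{cfthmlemma} guarantees that any two irreducible programs reachable from $E(\vec{\f x})$ by full reduction are congruent. So the existence-and-uniqueness content of the theorem is essentially packaged by these two lemmas; the work is to assemble them cleanly.

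First I would \emph{define} $\nf(E(\vec{\f x}))$ by fixing any maximal one-step reduction sequence starting from $E(\vec{\f x})$, which exists and has length $\size(E)$ by Lemma~\ref{redlength}, and setting $\nf(E(\vec{\f x}))$ to be its final term. By the same lemma this final term is irreducible, giving property~(1), and by the definition of full reduction~\eqref{fullred} we immediately get $E(\vec{\f x})\redto\nf(E(\vec{\f x}))$, which is property~(2). (Of course $\nf$ is only determined up to congruence, since the choice of fresh variables and the order of reductions are arbitrary, but that is exactly the ``unique up to congruence'' in the statement, and it is harmless because $\equiv_c$ is the equivalence we work modulo throughout.)

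Property~(3) is where Lemma~\ref{cfthmlemma} does the real work. Suppose $F(\vec{\f x})$ is irreducible and $E(\vec{\f x})\redto F(\vec{\f x})$. Then both $F(\vec{\f x})$ and $\nf(E(\vec{\f x}))$ are irreducible extended programs with $E(\vec{\f x})\redto F(\vec{\f x})$ and $E(\vec{\f x})\redto\nf(E(\vec{\f x}))$, so Lemma~\ref{cfthmlemma} applies verbatim and yields $F(\vec{\f x})\equiv_c\nf(E(\vec{\f x}))$. This also retroactively shows that the definition of $\nf$ did not depend on the particular maximal sequence chosen, confirming uniqueness up to congruence.

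For the final implication, suppose $E(\vec{\f x})\redto F(\vec{\f x})$. Since reduction is transitive when composed through congruences (by the definition~\eqref{fullred}, a full reduction is a chain of one-step reductions up to $\equiv_c$ at each end), the chain $E(\vec{\f x})\redto F(\vec{\f x})\redto\nf(F(\vec{\f x}))$ gives $E(\vec{\f x})\redto\nf(F(\vec{\f x}))$; and $\nf(F(\vec{\f x}))$ is irreducible by~(1). Applying~(3) to $E(\vec{\f x})$ with the irreducible target $\nf(F(\vec{\f x}))$ yields $\nf(F(\vec{\f x}))\equiv_c\nf(E(\vec{\f x}))$, as required. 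The only point deserving care --- and the place I expect the bookkeeping to bite --- is verifying that $\redto$ genuinely composes through the $\equiv_c$-steps built into~\eqref{fullred}; this reduces to checking that a one-step reduction can be transported along a congruence, which is precisely parts~(2) and~(3) of Lemma~\ref{redfree} (freshness-respecting renaming of the new variable $\f q$ and of any other variable $\f r$). So transitivity of full reduction is not an extra hypothesis but a short consequence of Lemma~\ref{redfree}, and once it is recorded the theorem follows formally from the two preceding lemmas.
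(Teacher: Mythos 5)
Your proof is correct and takes essentially the same approach as the paper's: define $\nf(E(\vec{\f x}))$ as the endpoint of a maximal one-step reduction sequence (Lemma~\ref{redlength}), get (1) and (2) immediately, obtain (3) from Lemma~\ref{cfthmlemma}, and deduce the final implication by composing $E(\vec{\f x})\redto F(\vec{\f x})\redto\nf(F(\vec{\f x}))$ and applying (3). Your explicit check that full reduction composes through the congruence steps in~\eqref{fullred}, via parts (2) and (3) of Lemma~\ref{redfree}, addresses a point the paper leaves implicit and is a sound addition.
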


\begin{proof}

If $E(\vec{\f x})$ is irreducible, take $\nf(E(\vec{\f x}))\equiv E(\vec{\f x})$, and
if $\size(E)=k>0$, fix a maximal sequence of one-step reductions
\[
E(\vec{\f x})\redto_1 F^1(\vec{\f x})\redto_1 \cdots \redto_1 F^k(\vec{\f x})
\]
and set $\nf(E(\vec{\f x})):\equiv F^k(\vec{\f x})$; now (1) and (2) are immediate and (3) follows
from Lemma~\ref{cfthmlemma}. The last claim holds because
$F(\vec{\f x})\redto\nf(F(\vec{\f x}))$, so
\[
\textup{if }E(\vec{\f x})\redto F(\vec{\f x}) \textup{ then } E(\vec{\f x})\redto\nf(F(\vec{\f x}))
\]
and hence $\nf(E(\vec{\f x}))\equiv_c\nf(F(\vec{\f x}))$ by (3).
\end{proof}

It is clearly possible to assign to each extended program $E(\vec{\f x})$ a specific
canonical form, \eg by choosing $\nf(E(\vec{\f x}))$ to be the
``lexicographically least'' irreducible $F(\vec{\f x})$ such that
$E(\vec{\f x})\redto F(\vec{\f x})$. This, however, is neither convenient nor useful, and
it is best to think of $\nf(E(\vec{\f x}))$ as denoting ambiguously
\textit{any irreducible extended program} such that $E(\vec{\f x})\redto\nf(E(\vec{\f x}))$; the
theorem insures that any two such \textit{canonical forms} of $E(\vec{\f x})$
are congruent, and the construction in the proof gives a simple
way to compute one of them.

\ysection{Canonical forms for richer languages}
A substantially more general version of the Canonical Form Theorem
for \textit{functional structures} was established
in~\cite{ynmflr}, and there are natural versions of it for many
richer languages, including a suitable formulation of the typed
$\lambda$-calculus with recursion (PCF). The proof we gave here for McCarthy
programs is considerably easier than these, more general results.

\ysection{Intensions and elementary algorithms}
\label{intdefinition}

The (referential)
\textit{intension} of an extended McCarthy program $E(\vec{\f x})$
in a structure $\A$ is the recursor of its canonical form,
\begin{equation}
\label{refintdef}
\refint(\A,E(\vec{\f x})) \edf \recursor(\A,\nf(E)(\vec{\f x}));
\end{equation}
it does not depend on the choice of $\nf(E)$
by~\eqref{congintequal}, and it models the \textit{elementary
algorithm} expressed by $E(\vec{\f x})$ in $\A$.

\ynm{A (bad) discussion of intensional equivalence between programs, which
I should at some point address.

An operation $\boldsymbol\phi(\A,E(\vec{\f x}))$ on the extended programs of a
structure $\A$ is an \textit{intensional invariant} if it depends
only on the intension of $E(\vec{\f x})$ in $\A$, \ie
\begin{equation}
\label{intinvariant}
\refint(\A,E(\vec{\f x})) = \refint(\A,F(\vec{\f x}))
\implies \boldsymbol\phi(\A,E(\vec{\f x})) = \boldsymbol\phi(\A,F(\vec{\f x})).
\end{equation}
We would expect that the most basic complexity measures on
programs are intensional invariants, and they are,
Problems~\ref{callsinvariant}, \ref{depthinvariant}.}

\ynm{In the structure $(A,\phi,\psi)$ where $\phi=\psi$ the terms
$\phi(x)$ and $\psi(x)$ have the same intension, but do not have
the same calls-complexity functions for $\phi$ because of the
names; also, if $\phi(x)=x$, then the total calls-complexity is
not determined by the intension. Most likely I should avoid these
picky issues here, it is not the place for them.}

\ysection{The recursor representation of an iterative algorithm}

Using the precise definition of the classical notion of an \textit{iterator} (sequential
machine)
\[
\iterator = (\yinput, S, \sigma, T, \youtput) : X\iterto W
\]
in Section~\ref{cmodels} of \ar, let
\[
\A_{\iterator} = (A_{\iterator}, X, W, S,
\yinput, \sigma, T, \youtput)
\]
be the structure associated with $\iterator$, where
\[
A_{\iterator} = X\uplus W\uplus S
\]
is the disjoint sum of the sets $X,W$ and $S$,  and let
\[
E_{\iterator}(\f x) :\equiv
\f q(\yinput(\f x))
\where
\Lbrace\f q(\f s) = \tif T(\f s)~\tthen \youtput(\f s)
~\telse \f q(\sigma(\f s))\Rbrace.
\]

\begin{proposition}
\label{recit}

An iterator $\iterator$ is determined by its intension
\begin{equation}
\label{recit2}
\refint(\iterator) = \refint(\A_\iterator, E_\iterator(\f x))
= \recursor(\A_\iterator,\nf(E_\iterator)(\f x)),
\end{equation}
the recursive algorithm on $\A_\iterator$ expressed by the extended tail recursive program
$E_\iterator(\vec{\f x})$ associated with $\iterator$.
\end{proposition}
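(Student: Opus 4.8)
The plan is to compute the canonical form $\nf(E_\iterator)$ explicitly and then read the five components of $\iterator$ directly off the parts of its recursor. Since $E_\iterator(\f x)$ is a (tail recursive) extended program, Theorem~\ref{cfthm} gives a canonical form unique up to congruence, and by~\eqref{refintdef} the intension is $\recursor(\A_\iterator,\nf(E_\iterator)(\f x))$; so it suffices to exhibit $\nf(E_\iterator)$ and to recover $\iterator$ from the functionals that make up that recursor.

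First I would run the reduction. The head $\f q(\yinput(\f x))$ is reducible since $\yinput(\f x)$ is not immediate, and one head-case step replaces it by $\f q(\f w_1(\f x))$ while adjoining $\f w_1(\f x)=\yinput(\f x)$. In the single body equation, viewing the conditional as $\ycond_s(T(\f s),\youtput(\f s),\f q(\sigma(\f s)))$, each of the three arguments is non-immediate; successive body-case steps pull them out as $\f w_2(\f s)=T(\f s)$, $\f w_3(\f s)=\youtput(\f s)$, $\f w_4(\f s)=\f q(\sigma(\f s))$, and one further step on the last equation extracts $\f w_5(\f s)=\sigma(\f s)$. Since $\size(E_\iterator)=5$, by Lemma~\ref{redlength} this sequence of five one-step reductions is maximal, so the resulting $6$-part extended program is $\nf(E_\iterator)(\f x)$: its head is $\f q(\f w_1(\f x))$ and its body is
\[
\f q(\f s)=\tif \f w_2(\f s)\tthen \f w_3(\f s)\telse \f w_4(\f s),\quad
\f w_1(\f x)=\yinput(\f x),\quad \f w_2(\f s)=T(\f s),
\]
\[
\f w_3(\f s)=\youtput(\f s),\quad \f w_4(\f s)=\f q(\f w_5(\f s)),\quad \f w_5(\f s)=\sigma(\f s).
\]
The essential feature is that each of the four primitives occurs alone in an equation of the form $\f w(\vec{\f v})=\phi(\vec{\f v})$.

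Passing to the recursor, its dimension is $6$ and, reading off denotations, the parts for $\f w_1,\f w_2,\f w_3,\f w_5$ are the \emph{constant} functionals $\yinput^{\A_\iterator},T^{\A_\iterator},\youtput^{\A_\iterator},\sigma^{\A_\iterator}$ (they do not depend on the solution variables), whereas the head $\f q(\f w_1(\f x))$, the $\f q$-part $\ycond(\f w_2(\f s),\f w_3(\f s),\f w_4(\f s))$, and the $\f w_4$-part $\f q(\f w_5(\f s))$ form a fixed ``skeleton'' that merely wires the solution variables together and is the same for every iterator. From the recursor I would then recover $\iterator$: $T^{\A_\iterator}$ is the unique constant part of sort $\bool$, so $S$ is recovered as its domain (equivalently, as the $S$-summand of $A_\iterator=X\uplus W\uplus S$); among the constant parts of sort $\ind$, $\yinput$ is the one whose domain is disjoint from $S$ (it is $X$), $\sigma$ the one with domain and range in $S$, and $\youtput$ the one with domain $S$ and range in $W$ --- these three are separated because $X,W,S$ are disjoint summands of the universe. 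This yields $\iterator=(\yinput,S,\sigma,T,\youtput)$.

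The step that needs genuine care is the last one, because the intension determines the recursor only up to strong isomorphism: by~\eqref{recidentity} its parts are remembered only up to the permutation $\pi$ (with $\pi(0)=0$) and the attendant relabeling of solution variables. I would verify that this freedom is harmless, since strong isomorphism matches each part $\beta_i$ with $\alpha_{\pi(i)}$ up to the relabeling of the solution variables, hence preserves the head/body distinction and, for the parts that are constant in the solution variables, the underlying partial function itself. Consequently the multiset of primitive functionals $\{\yinput^{\A_\iterator},T^{\A_\iterator},\youtput^{\A_\iterator},\sigma^{\A_\iterator}\}$, together with their domains and ranges inside $A_\iterator$, is an invariant of the intension, and the separation above is well defined; so $\iterator$ is determined by $\refint(\iterator)$. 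The only residual subtlety is degenerate cases (a primitive with empty domain or range), which I would settle by appealing to the summand tagging of $A_\iterator=X\uplus W\uplus S$ rather than to ranges alone.
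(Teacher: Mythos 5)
Your proposal is correct and follows essentially the same route as the paper's intended proof (sketched in the hint and solution to Problem~\ref{recitprob}): compute $\nf(E_\iterator)$ explicitly by the reduction calculus---your six-part canonical form is congruent to the paper's---then recover $\iterator$ from the four parts that are constant in the solution variables, using the tags of the disjoint sum $A_\iterator=X\uplus W\uplus S$ to resolve the ambiguity left by congruence/strong isomorphism. The only (immaterial) difference is bookkeeping: the paper identifies $\yinput$ by its input set, $T$ by its Boolean output set, $\youtput$ by its output set and takes $\sigma$ to be the one left over, whereas you separate them by sort, domain and range.
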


\begin{proofplain} is easy, if a bit technical, and we leave it for
Problem~\ref{recitprob}.
\end{proofplain}

Theorem 4.2 in~\cite{ynmpaschalis2008} is a much stronger result
along these lines, but it requires several definitions for its
formulation and this simple Proposition expresses quite clearly
the basic message: every property of an iterator $\iterator$ can
be expressed as a property of its associated recursor
$\refint(\iterator)$, and so choosing recursive rather than
iterative algorithms as the basic objects does not force us to
miss any important facts about the simpler objects.

\ynm{\ysection{About implementations (3)}
\label{implementations3}

An outline of a \textit{theory of implementations} was sketched
first in~\cite{ynmsicily} and then (in more detail)
in~\cite{ynmpaschalis2008}. Briefly:\smallskip

(1) A \textit{reducibility relation} $\alpha\leq_r\beta$ on
recursors $\alpha,\beta:X\iterto W$ with the same input and output
sets is defined, aiming to capture an exact notion of
``simulation''.\smallskip

(2) By definition, an \textit{implementation} of a  recursor $\alpha:X\iterto W$
is any iterator $\iterator : X\iterto W$
such that $\alpha$ is
reducible to the recursor associated with $\iterator$,
$\alpha\leq_r\recursor(\A_\iterator,E_\iterator(\f x))$.\smallskip

By the main Theorem~5.3 in~\cite{ynmpaschalis2008},
\begin{quote}
\it
the recursive machine $\iterator(\A,E(\vec{\f x}))$ associated
with $E(\vec{\f x})$ in $\A$ is an implementation of the recursor
$\recursor(\A,E(\vec{\f x}))$.
\end{quote}
This provides some, initial evidence that this approach to a
theory of implementations at least covers a most basic case, but
not much more has been done on the question and we will not go
into it here. What is worth emphasizing (again) is that \textit{the
recursive machine $\iterator(\A,E(\vec{\f x}))$ lives in a
structure $\A_\iterator$ whose universe $A_\iterator$ extends the
universe $A$ of $\A$ and which has richer primitives}. In other
words, the usual assertion that \textit{recursion can be reduced
to} (or \textit{implemented by}) \textit{iteration} involves
extending the universe of the structure and adding new primitives;
and this is necessary by the results discussed in Section~\ref{recvstail}
and on page~\pageref{recvstailII} of \ar.}
\ynm{in~\cite{stolboushkin1983}
and~\cite{tiuryn1989} cited on page~\pageref{recistail}.}

\problems

\begin{prob}
\label{immsorts}

A term $I$ is \textit{immediate of sort $s$} if $I\equiv \f v_i$
and $s=\ind$; or $I\equiv \f p^{s,0}$; or $I\equiv \f p^{s,n}(\f
u_1,\ldots, \f u_n)$ where each $\f u_i$ is an individual variable.
Write out a similar, full (recursive) definition of \textit{irreducible terms
of sort $s$}.
\end{prob}

\ynm{\begin{prob}
\label{redfreeprob}
Prove Lemma~\ref{redfree}
\end{prob}}

\begin{prob}[Size]
\label{size0}

(1) Prove than there is exactly one function $\size(E)$ which assigns a number to
every explicit term so that
\begin{multline*}
\textit{if $E(\vec{\f x})$ is irreducible, then $\size(E)=0$, and}\\
\size(\f c(G_1,\ldots,G_l)) = \textstyle\sum\big\{\size(G_i)+1 \st G_i
\text{ is not immediate}\}.
\end{multline*}

(2) For a program $E$ as in~\eqref{recprog}, set
\[
\size(E) = \textstyle\sum\big\{\size(E)_i \st E_i \text{ is a part of }E\big\}
\]
and check that
\begin{enumerate}
\item[(1)] $E(\vec{\f x})$ is irreducible if and only if $\size(E)=0$, and
\item[(2)] if $E(\vec{\f x})\redto_1 F(\vec{\f x})$,  then $\size(F)=\size(E)-1$.
\end{enumerate}
\ynm{This was with only the intuitive def. around. \newcommand\tsum{\sum}
(1) If $E$ is irreducible, then every equation of $E$ is of the
form
\[
\f p_i(\vec{\f x}_i) = \f c(G_1,\ldots,G_l)
\]
with immediate $G_1,\ldots, G_l$ and no non-immediate term occurs
as a proper subterm of a part of $E$ and $\size(E)=0$; and
if $E$ is reducible, then in some equation of $E$ as above some
$G_i$ is not immediate and is counted, so
$\size(E)>0$.

(2) If $\f p_i(\vec{\f x}_i) = \f
c(G_1,\ldots,G_{j-1},G_j,G_{j+1},\ldots,G_l)$ is an equation of
$E$, then the contribution it makes to the size of $E$ is
\[
s = \tsum_{1\leq i \leq l}\size(G_i) + |\{i \st G_i \text{ is not immediate}\}|.
\]

If $G_j$ is not immediate, then the two equations in the program $E'$
constructed by the one-step reduction contribute
\begin{multline*}
s' = \tsum_{1\leq i\leq l, i\neq j}\size(G_i)\\
+ |\{i \st i\neq j \conj G_i \text{ is not immediate}\}|
+\size(G_j) = s-1
\end{multline*}
to the $\size(E')$.}
\end{prob}

\begin{prob}
\label{noopt}

Notice that the irreducible extended program $F_5(\vec{\f x})$ in
Diagram~\ref{redexample} to which $E(\vec{\f x})$ reduces calls for computing
the value $\phi_1(x)$ twice in each ``loop''. Define an extended program $F(\vec{\f x})$
which computes the same partial function as $E(\vec{\f x})$ but calls for
computing $\phi_1(x)$ only once in each loop, and construct a
complete reduction sequence for $E^\ast(\vec{\f x})$.

\sol{$E^\ast \equiv \tif\phi_0(q(x)~\tthen y~\telse \phi_2(q(x),y)
\where\{q(x) = \phi_1(x)\}$.}
\end{prob}

\begin{dprob}
\label{recitprob}

Prove Proposition~\ref{recit}. \hint Use the reduction calculus to
compute a canonical form of the program $E_\iterator(\vec{\f x})$,
\begin{multline*}
\nf(E_{\iterator}(\vec{\f x}))\equiv_c \f q(\f p_1(\f x)) \rwhere \Lbrace \f p_1(\f x) = \yinput(\f x),
\\
\f p_2(\f s) = T(\f s),~~\f p_3(\f s) = \youtput(\f s),~~
\f p_4(\f s) = \sigma(\f s),~~\f p_5(\f s) = \f q(\f p_4(\f s))\\
\f q(\f s) = \tif \f p_2(\f s)~\tthen \f p_3(\f s)~\telse \f p_5(\f s)\Rbrace,
\end{multline*}
so that $\refint(\iterator(\f x))=\refint(\A_\iterator,E_\iterator(\f
x)) =\recursor(\A_\iterator,\nf(E_\iterator(\f x)))=
(\alpha_0,\alpha_1,\ldots,\alpha_6)$, where
\[
\alpha_i: A_\iterator\times (A_\iterator\pfto A_\iterator)
\times(A_\iterator\pfto\boolset)\times(A_\iterator\pfto
A_\iterator)^4 \pfto A_\iterator\quad(i=1,\ldots,6).
\]
The result is practically trivial from this, except for the fact
that we only know the canonical form up to congruence, so it is
not immediate how to ``pick out'' from the functionals $\alpha_i$
the input, output, \etc of $\iterator$---and this is complicated
by the fact that the sets $X,W$ and $S$ may overlap, in fact they may all
be the same set. We need to use the details of the coding of
many-sorted structures by single-sorted ones specified in Section~1D of~\ar.

\sol{The functionals $\alpha_i$ determine the set
\[
A_{\iterator} = X\uplus W\uplus S = (\{0\}\times X)\cup (\{1\}\times W) \cup
(\{2\}\times S)
\]
and hence the sets $X,W$ and $S$ of the iterator.
Moreover, exactly four of them are independent of their partial
function arguments, \ie (with the listing we assumed)
\[
\alpha_1(x,\vec p) = f_1(x),~~\alpha_2(s,\vec p)=f_2(s), ~~
\alpha_3(s,\vec p)=f_3(s),~~\alpha_4(s)=f_4(s)
\]
for suitable partial functions $f_1,f_2,f_3,f_4$; and then we can
easily read off them the set $T$ of terminal states and the
partial functions
\[
\yinput:X\to S, \youtput:T\to W, \sigma:S\pfto S.
\]

For example, $\yinput$ is the only one of these functions whose
input set $\{0\}\times X$;
$f_2$ (with our listing) is the only one of these functions
whose output set is $\boolset$ and it determines $T$; $f_3$ is
the only one of these functions whose output set is $\{1\}\times
W$ and it determines $\youtput$; and then $\sigma$ is the only one left.
}
\end{dprob}

\section{Decidability of intensional program equivalence}
\label{decidability}\ynm{Why did I put this note here?
The reduction to irreducible identities needs to be checked.}

Two extended $\Phi$-programs are \textit{intensionally equivalent
on a $\Phi$-structure} $\A$ if they have equal intensions,
\begin{equation}
\label{inteq}
E(\vec{\f x})\inteq{\A} F(\vec{\f x}) \eedf
\refint(\A,E(\vec{\f x})) = \refint(\A,F(\vec{\f x}));
\end{equation}
they are (globally) \textit{intensionally equivalent} if they are
intensionally equivalent on every infinite $\A$,
\begin{equation}
\label{ginteq}
E(\vec{\f x})\inteq{} F(\vec{\f x}) \eedf
(\forall\text{ infinite }\A)[\refint(\A,E(\vec{\f x})) = \refint(\A,F(\vec{\f x}))].
\end{equation}

The main result of this section is

\begin{theorem}
\label{localdec}
For every infinite $\Phi$-structure $\A$, the relation of
intensional equivalence on $\A$ between extended $\Phi$-programs
is decidable.\footnote{Except for a reference to a published
result in Problem~\ref{intequivlowerbound}, I will not discuss in
this paper the question of \textit{complexity} of intensional
equivalence on recursive programs, primarily because I do not know
anything non-trivial about it.}
\end{theorem}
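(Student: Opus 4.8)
The plan is to reduce intensional equivalence, through the Canonical Form Theorem~\ref{cfthm} and Lemma~\ref{recprogiso}, to deciding finitely many \emph{irreducible identities} in $\A$, and then to show that each such identity is decidable because $\A$ is infinite. First I would compute canonical forms: by Theorem~\ref{cfthm} a maximal reduction sequence starting with $E(\vec{\f x})$ terminates after exactly $\size(E)$ steps in an irreducible $\nf(E)(\vec{\f x})$ unique up to congruence, and both $\nf(E)(\vec{\f x})$ and $\nf(F)(\vec{\f x})$ are effectively computable from the programs. By the definition~\eqref{refintdef} of the intension, $E(\vec{\f x})\inteq{\A}F(\vec{\f x})$ holds exactly when $\nf(E)(\vec{\f x})$ and $\nf(F)(\vec{\f x})$ define the same recursor on $\A$. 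Now Lemma~\ref{recprogiso} turns this into a finite search: the two canonical forms must have the same number $K+1$ of parts, and there must be a permutation $\pi$ of $\{0,\ldots,K\}$ with $\pi(0)=0$ for which the $K+1$ semantic identities~\eqref{recprogisochar} hold in $\A$. There are only finitely many candidate $\pi$, so it suffices to decide, for fixed $\pi$, each identity in~\eqref{recprogisochar}. Since the parts of $\nf(E)$ and $\nf(F)$ are irreducible terms and irreducibility is preserved by the alphabetic substitutions appearing in~\eqref{recprogisochar} (by~\eqref{alphchange}), every one of these identities has the form
\[
\A\models M(\vec{\f x}) = N(\vec{\f x})
\]
with $M,N$ irreducible $\Phi\cup\{\vec{\f p}\}$-terms. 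Thus the theorem follows once we establish the key lemma: for irreducible $M,N$, the identity $\A\models M(\vec{\f x})=N(\vec{\f x})$ is decidable.

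To prove this lemma I would exploit that irreducible terms are shallow and that the function variables $\vec{\f p}$ are free, ranging over \emph{all} partial functions, so their values at the individual variables are unconstrained except where forced by literal repetition. The identity is universal over $\vec{\f x}$ and $\vec{\f p}$; I would first split on the finitely many equality patterns $P$ among the individual variables occurring in $M,N$, realizing each $P$ (here is where infiniteness enters) by concrete pairwise-distinct elements of the infinite universe $A$. Fixing $P$, each distinct immediate subterm $\f p(\vec{\f u})$ becomes an independent ``generic'' value ranging over $A\cup\{\diverges\}$ --- two such subterms being linked only when $P$ forces their arguments equal --- and each conditional $\tif I_0\tthen I_1\telse I_2$ is resolved by a further finite split on whether $I_0\converges\true$, $I_0\converges\false$, or $I_0\diverges$. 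Because $A$ is infinite, every such assignment of generic values and truth/divergence patterns is realized by an actual choice of $\vec{\f p}$. Collecting these splits, $\A\models M=N$ becomes equivalent to a \emph{finite} Boolean combination of elementary facts about the finitely many primitives $\phi^\A$ occurring in $M,N$: strictness propagates the values of the immediate arguments, and what remains are assertions of the form ``$\phi^\A$ and $\psi^\A$ agree on a prescribed pattern of arguments'' together with convergence facts, each a single property of $\A$.

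The main obstacle is precisely this last reduction: managing the non-strict conditionals together with the linkage between repeated immediate subterms, and verifying that on an infinite $\A$ no coincidental collapse can occur beyond those recorded by the primitive identities (this is exactly what fails on small finite universes, where pigeonhole forces spurious equalities). Once it is checked that $\A\models M=N$ is equivalent to such a finite Boolean combination of primitive facts, decidability for each fixed $\A$ is immediate: the reduction from $(E,F)$ to this Boolean combination is effective, and for a fixed structure the finitely many primitive identities that can arise --- only the symbols actually occurring in $E,F$ matter, so the relevant fragment of $\Phi$ is finite --- have fixed truth values, which the decision procedure consults as the finite data determined by $\A$. Assembling the three stages, computing the canonical forms, the finite permutation search of Lemma~\ref{recprogiso}, and deciding each irreducible identity, yields the desired decision procedure for the relation $\inteq{\A}$.
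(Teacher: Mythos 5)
Your proposal is correct and takes essentially the same route as the paper: canonical forms via Theorem~\ref{cfthm}, a finite permutation search via Lemma~\ref{recprogiso}, and a reduction of each resulting irreducible identity to a finite list of facts about the primitives of $\A$ (the paper's ``dictionary'' of bare identities under injective validity) that the decision procedure consults as fixed data, with infiniteness of $A$ used exactly where you use it, to realize the generic assignments. The equality-pattern splits, generic values for immediate subterms with linkage, and truth/divergence case splits for conditionals in your sketch correspond precisely to the paper's equivalence relations on placed variables (Lemma~\ref{phipsilemma2}), fresh nullary function variables (Lemma~\ref{phipsilemma}), and the case Lemmas~\ref{2-2}--\ref{philemma1}, so the obstacle you flag is the same ``fussy'' part the paper's injective-validity machinery resolves.
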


We will also prove that global intensional equivalence between
$\Phi$-programs is essentially equivalent with congruence, and so
it is decidable, Theorem~\ref{globaldec}.

\ysection{Plan for the proof}
\label{proofplan}
To decide if $E(\vec{\f x})$ and $F(\vec{\f x})$ are intensionally
equivalent on $\A$, we compute their canonical forms, check
that these have the same number $K+1$ of irreducible parts $E_i, F_j$
and then (to apply Lemma~\ref{recprogiso}) check if there is a
permutation $\pi$ of $\{0,\ldots,K\}$ with inverse $\rho$ such
that $\pi(0)=0$ and
\[
\begin{array}{rcll}
\A &\models& E_0(\vec{\f x})&=
F_0\{\vec{\f q} :\equiv \rho(\vec{\f p})\}(\vec{\f x}),\\
\A &\models& E_i(\vec{\f x}_i)&= F_{\pi(i)}\{\vec{\f q} :\equiv \rho(\vec{\f
p}), \vec{\f y_i}_{\pi(i)}:\equiv\vec{\f x}_i\}, \quad (i=1,\ldots,K).
\end{array}
\]
These identities are all irreducible by~\eqref{alphchange}, so the
problem comes down to deciding the validity of \textit{irreducible
term identities} in $\A$. The proof involves associating with each
infinite $\Phi$-structure $\A$ a finite list of ``conditions''
about its (finitely many) primitives---its
\textit{dictionary}---which codifies all the information needed to
decide whether an arbitrary irreducible identity holds in $\A$; it
involves some fussy details, but fundamentally it is an elementary
exercise in the \textit{equational logic of partial terms with
conditionals}. In classical terminology, it is (basically) a
\textit{Finite Basis Theorem} for the theory of irreducible
identities which hold in an arbitrary, infinite (partial)
$\Phi$-structure.\footnote{The decidability of intensional equivalence is
considerably simpler for total structures $\A$, \cf
Lemma~\ref{totalirreducible} and substantially more difficult for
the \textsf{FLR}-structures of~\cite{ynmflr} which allow
\textit{monotone, discontinuous functionals} among their
primitives; see \cite{ynmfrege}---and a correction which fills a
gap in the proof of this result in~\cite{ynmfrege} which is posted
(along with the paper) on ynm's homepage. It is an open---and, I
think, interesting---question for the \textit{acyclic recursive
algorithms} of~\cite{ynmlcms}, where intensional synonymy  models \textit{synonymy}
(or \textit{faithful translation}) for fragments of natural
language.}

\ysection{The satisfaction relation}

We will also need in this Section the classical characterization
of valid identities in terms of the \textit{satisfaction
relation}.

An \textit{assignment} $\sigma$ in a $\Phi$-structure $\A$ associates with each
variable an object of the proper kind, \ie
\[
\sigma(\f v_i)\in A,\quad \sigma(\f p^{s,n}_i):A^n\pfto A_s,
\]
so that, for example, the \textit{boolean function variables} $\f
p^{\bool,0}_i$ are assigned nullary partial functions
$p:\emptyproduct\pfto \boolset$, essentially $\diverges$,
$\true$ or $\false$. For each $\tPhi$-term $E$, we define
\[
\sigma(\A,E) \edf\text{ the value (perhaps $\diverges$)
of $E$ in $\A$ under $\sigma$}
\]
by the usual compositional recursion and we set
\[
\A,\sigma \models E=F \eedf \sigma(\A,E) = \sigma(\A,F).
\]
If the variables of $E$ are among $\f p_1, \ldots, \f p_K, \f x_1, \ldots, \f x_n$, then
\[
\sigma(\A,E) = \den((\A,\sigma(\f p_1), \ldots, \sigma(\f p_K)),
\sigma(\f x_1),\ldots, \sigma(\f x_n)),
\]
and so by the notation on p.~\pageref{models}, for any two extended $\tPhi$-terms,
\[
\A\models E(\vec{\f x}) = F(\vec{\f x}) \iff (\text{for all } \sigma)[\A,\sigma\models E=F].
\]

\ysection{Preliminary constructions and notation}

We fix for this Section an infinite  $\Phi$-structure
$\A=(A,\Phi)$ (with finite $\Phi$, as always) and we assume
without loss of generality that among the symbols in $\Phi$ are
$\psi_\true, \psi_\false, \id$ such that
\[
\psi_\true^\A=\true, \quad \psi_\false^\A = \false, \quad \id^\A(t) = t.
\]

We will often define assignments partially, only on the variables
which occur in a term $E$ and specifying only finitely many values
$\sigma(\f p^{s,n}_i)(\vec x)$ of the function variables---those needed
to compute $\sigma(\A,E)$---perhaps
to come back later and extend $\sigma$ when we view $E$ as a
subterm of some $F$.\smallskip

Using the assumption that $A$ is infinite, we fix pairwise disjoint, infinite sets
\[
A^\ind, ~~A^{\ind,n}_i \subset A
\]
whose union is co-infinite in $A$ and we fix an assignment $\tsigma$ on the individual
and function variables of sort $\ind$ using injections
\[
\tsigma:\{\f v_0, \f v_1, \ldots\} \inj A^\ind, \quad \tsigma(\f p^{\ind,n}_i):A^n\inj A^{\ind,n}_i.
\]
We do not define at this point any values $\tsigma(\f
p^{\bool,n}_i)(x_1,\ldots,x_n)$ for function variables of sort
$\bool$.\smallskip

A term $E$ is \textbf{pure, algebraic} if it is of
sort $\ind$ and none of $\true, \false$, the conditional or any
symbol from $\Phi$ occurs in it, \ie
\begin{equation}
\tag{Pure, algebraic terms}
E :\equiv \f v_i \mid \f p^{\ind,0}_i \mid \f p^{\ind,n}_i(E_1,\ldots,E_n).
\end{equation}

\begin{lemma}
\label{purealgebraic}

If $E,F$ are pure, algebraic terms, then
\[
\A,\tsigma \models E =F \iff E\equiv F.
\]
\end{lemma}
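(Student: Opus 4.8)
The plan is to observe that the assignment $\tsigma$ has been rigged precisely so that the denotation map $E\mapsto\tsigma(\A,E)$ is \emph{injective} on pure, algebraic terms, and the lemma is exactly this injectivity together with its trivial converse. The direction $E\equiv F\implies\A,\tsigma\models E=F$ needs no argument, since syntactically identical terms take the same value under every assignment. So the whole content lies in the converse, which I would prove by induction on the structure of $E$ (with the induction hypothesis quantified over all $F$), establishing for all pure, algebraic $E,F$ that
\[
\tsigma(\A,E)=\tsigma(\A,F)\implies E\equiv F.
\]

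The engine of every case is that the three possible shapes of a pure, algebraic term---an individual variable $\f v_i$, a nullary function variable $\f p^{\ind,0}_i$, or an application $\f p^{\ind,n}_i(E_1,\ldots,E_n)$ with $n\geq 1$---deposit their values into the respective regions $A^\ind$, $A^{\ind,0}_i$, and $A^{\ind,n}_i$. Since these sets are pairwise disjoint by construction, the equation $\tsigma(\A,E)=\tsigma(\A,F)$ forces $E$ and $F$ to share the same top-level shape, and in the function-variable cases the very same symbol: a value lying simultaneously in $A^{\ind,n}_i$ and $A^{\ind,m}_j$ forces $(n,i)=(m,j)$. Thus any mismatch of shape, arity, or function symbol is ruled out immediately by disjointness.

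Having matched the top shapes, I would descend using injectivity. If $E\equiv\f v_i$ and $F\equiv\f v_j$, then $\tsigma(\f v_i)=\tsigma(\f v_j)$ and the injectivity of $\tsigma$ on variables give $i=j$, hence $E\equiv F$; the nullary case is settled as soon as the symbols are matched. In the remaining case $E\equiv\f p^{\ind,n}_i(E_1,\ldots,E_n)$ and $F\equiv\f p^{\ind,n}_i(F_1,\ldots,F_n)$ with the same $\f p^{\ind,n}_i$, the hypothesis reads
\[
\tsigma(\f p^{\ind,n}_i)(\tsigma(\A,E_1),\ldots,\tsigma(\A,E_n))=\tsigma(\f p^{\ind,n}_i)(\tsigma(\A,F_1),\ldots,\tsigma(\A,F_n)),
\]
and since $\tsigma(\f p^{\ind,n}_i):A^n\inj A^{\ind,n}_i$ is injective, the argument tuples agree coordinatewise, i.e.\ $\tsigma(\A,E_k)=\tsigma(\A,F_k)$ for each $k$. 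Applying the induction hypothesis to the strictly smaller subterms yields $E_k\equiv F_k$ for all $k$, whence $E\equiv F$.

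I do not expect a genuine obstacle here: the argument is wholly driven by the two design features of $\tsigma$---the pairwise disjointness of the target regions and the injectivity of each interpreting map. The only point demanding slight care is the bookkeeping of the index pair $(n,i)$, so that matching regions correctly forces matching arity \emph{and} symbol, which is exactly where the disjointness of the family $\{A^{\ind,n}_i\}$ is used. I would also note that the co-infiniteness of the union $A^\ind\cup\bigcup_{i,n}A^{\ind,n}_i$ plays no role in this lemma; it is held in reserve for the later constructions, where values of the primitives $\phi^\A$ and of Boolean function variables must be fitted in.
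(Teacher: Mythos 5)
Your proof is correct and is essentially the paper's own argument: the paper disposes of this lemma in one sentence (``an easy induction on $\ylength(E)$, using the Parsing Lemma''), and your structural induction—using the pairwise disjointness of $A^\ind$ and the $A^{\ind,n}_i$ to match top-level shapes and symbols, then the injectivity of $\tsigma(\f p^{\ind,n}_i)$ to descend to subterms—is exactly the induction the paper has in mind. Your closing remark that co-infiniteness of the union is not needed here is also accurate; it is reserved for the later extensions of $\tsigma$.
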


\begin{proofplain} of this (classical) result is by an easy induction on $\ylength(E)$, using
the Parsing Lemma in Problem~\ref{fulltermdef}.
\end{proofplain}

\ysection{Forms of irreducible identities}

To organize the proof of Theorem~\ref{localdec}, we will appeal to the (trivial) fact that
\textit{every irreducible term is in exactly one of the following
forms}:
\begin{enumerate}
\label{irrterms}

\item[(1)] $\true$, $\false$, or an individual variable $\f v$.

\item[(2)] Function variable application, $\f p(z_1,\ldots,z_n)$,
where the terms $z_1,\ldots,z_n$ are immediate of sort $\ind$ and
$n\geq0$; $\f p$ can be of either sort, $\ind$ or $\bool$.

\item[(3)] Conditional, $\tif z_1~\tthen z_2~\telse z_3$, with
immediate $z_1$ of sort $\bool$ and immediate $z_2,z_3$ of the
same sort, $\ind$ or $\bool$.

\item[(4)] Primitive application, $\phi(z_1,\ldots,z_k)$, $k\geq
0$, with immediate $z_1,\ldots,z_k$, all of sort $\ind$ and $\phi$
of either sort.
\end{enumerate}

It follows that every irreducible identity falls
in one of the ten, pairwise exclusive forms \ef{i}{j}  with $1\leq
i\leq j\leq 4$ depending on the forms of its two sides.
In dealing with these cases in the following Lemmas, we  will use
repeatedly the (trivial) fact that
\begin{quote}
\it if $u,v$ are immediate of sort $\bool$
and $u\not\equiv v$, then we can extend $\tsigma$ to the function
variables which occur in $u$ and $v$ so that $\tsigma(u)$ and
$\tsigma(v)$ take any pre-assigned truth values \tu{or diverge}.
\end{quote}

\begin{lemma}[\ef{2}{2}]
\label{2-2}
For any two irreducible terms in form \tu{2},
\[
\A\models\f p(z_1,\ldots,z_n) = \f q(w_1,\ldots,w_m) \iff \f p(z_1,\ldots,z_n) \equiv \f q(w_1,\ldots,w_m).
\]
\end{lemma}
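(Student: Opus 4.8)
The plan is to prove the forward direction; the converse is trivial, since syntactically identical terms denote the same value under every assignment in every structure. So I assume $\A\models\f p(z_1,\ldots,z_n)=\f q(w_1,\ldots,w_m)$. For the identity to be well-formed its two sides must have the same sort, so $\f p$ and $\f q$ have a common sort, and I split the argument according to whether this sort is $\ind$ or $\bool$.

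When $\f p$ and $\f q$ are of sort $\ind$, each argument $z_i$ and $w_j$ is immediate of sort $\ind$ and hence pure algebraic, so the whole terms $\f p(z_1,\ldots,z_n)$ and $\f q(w_1,\ldots,w_m)$ are themselves pure algebraic. The hypothesis specializes to $\A,\tsigma\models\f p(z_1,\ldots,z_n)=\f q(w_1,\ldots,w_m)$, and Lemma~\ref{purealgebraic} delivers $\f p(z_1,\ldots,z_n)\equiv\f q(w_1,\ldots,w_m)$ immediately.

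When $\f p$ and $\f q$ are of sort $\bool$ the terms are no longer pure algebraic, and the idea is to evaluate the $\ind$-arguments using the fixed $\tsigma$ and then exploit the fact that the boolean function variables are still unassigned to separate the two sides whenever they differ. If $\f p\not\equiv\f q$, I extend $\tsigma$ by sending $\f p$ to the constantly-$\true$ partial function and $\f q$ to the constantly-$\false$ one, which makes the two sides take different truth values and contradicts the hypothesis. If instead $\f p\equiv\f q$ (so in particular $n=m$) while the argument lists differ, I fix a coordinate $k$ with $z_k\not\equiv w_k$; since these are immediate, hence pure algebraic, Lemma~\ref{purealgebraic} forces $\tsigma(z_k)\neq\tsigma(w_k)$, so the tuples $(\tsigma(z_1),\ldots,\tsigma(z_n))$ and $(\tsigma(w_1),\ldots,\tsigma(w_n))$ are distinct. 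I then extend $\tsigma$ by assigning $\f p$ a partial function that returns $\true$ on the first tuple and $\false$ on the second, once more contradicting the hypothesis. Hence $\f p\equiv\f q$ and the two argument lists coincide, so the terms are identical.

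The whole proof is routine; the one place that needs care --- and the closest thing to an obstacle --- is the boolean case, where I must check that the freedom left in the still-undefined boolean function variables is exactly enough to refute any genuine syntactic difference, and that the injectivity of $\tsigma$ on the $\ind$-part (packaged in Lemma~\ref{purealgebraic}) really does force distinct immediate arguments to produce distinct value-tuples.
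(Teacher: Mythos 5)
Your proof is correct and takes essentially the same route as the paper's: the same case split on the common sort of $\f p$ and $\f q$, the $\ind$ case dispatched by Lemma~\ref{purealgebraic} under $\tsigma$, and the $\bool$ case by extending $\tsigma$ so as to separate the two sides, first when $\f p\not\equiv\f q$ and then when some arguments differ. The only cosmetic difference is in the last subcase, where the paper uses the coordinate test $\tsigma(\f p)(x_1,\ldots,x_n)=\tif (x_i=a)~\tthen\true~\telse\false$ while you define the discriminating partial function directly on the two distinct value-tuples; both work for the same reason, namely the injectivity of $\tsigma$ on pure algebraic terms.
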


\begin{proof}
Let $E\equiv \f p(z_1,\ldots,z_n)$
and $F\equiv \f q(w_1,\ldots,w_m)$, assume $\A\models E=F$ and consider two cases:

\textit{Case 1}, $\sort(\f p)=\sort(\f q)=\ind$; now $E$ and $F$ are pure algebraic
and so by Lemma~\ref{purealgebraic}:
\[
\A\models E=F\implies \A,\tsigma\models E=F \implies E\equiv F.
\]

\textit{Case 2}, $\sort(\f p)=\sort(\f q)=\bool$. If $\f p\not\equiv \f q$, extend $\tsigma$ by setting
\[
\tsigma(\f p)(x_1,\ldots,x_n)=\true, \quad \tsigma(\f q)(y_1,\ldots,y_m)=\false
\]
and check that $\A,\tsigma\not\models E=F$;
so $\f p\equiv \f q$, hence $n=m$,
and it is enough to prove that $z_1\equiv w_1, \ldots, z_n\equiv w_n$. If not,
then for some $i$, $\tsigma(z_i)\neq\tsigma(w_i)$ by Lemma~\ref{purealgebraic}; and then we can choose some
$a\in A$ and extend $\tsigma$ by setting
\[
\tsigma(\f p)(x_1,\ldots,x_n) =\tif (x_i=a)~\tthen\true~\telse\false,
\]
so that $\tsigma(\f p(z_1,\ldots,z_n))\neq \tsigma(\f p(w_1,\ldots,w_n))$, which contradicts
the hypothesis.
\end{proof}

\begin{lemma}[\ef{3}{3}]
\label{3-3}
If $z_1, \ldots, w_3$ are all immediate, then
\begin{multline*}
\A\models \tif z_1~\tthen z_2~\telse z_3 = \tif w_1~\tthen w_2~\telse w_3,
\\
\iff z_1\equiv w_1, z_2\equiv w_2, z_3\equiv w_3.
\end{multline*}
\end{lemma}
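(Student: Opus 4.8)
The \emph{if} direction is immediate: identical terms have equal value under every assignment, so they are equal in $\A$. For the converse, the plan is to assume $\A\models \tif z_1~\tthen z_2~\telse z_3 = \tif w_1~\tthen w_2~\telse w_3$, write $E,F$ for the two sides, and reduce to a finite combinatorial fact about the strict three-valued conditional. The individual variables of all the $z_i,w_i$ are already assigned injectively by $\tsigma$, and by the same reasoning as in the remark quoted just before Lemma~\ref{2-2}, distinct immediate terms of sort $\bool$ correspond to distinct argument points of their function variables; hence I may extend $\tsigma$ so as to give each distinct immediate $\bool$-subterm occurring in $E,F$ any one of $\true$, $\false$, or $\diverges$, independently. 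It is convenient to split on the common sort $s$ of the branches $z_2,z_3,w_2,w_3$.

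First I would establish that the tests agree, $z_1\equiv w_1$. Suppose not. Since $z_1$ and $w_1$ are distinct immediate $\bool$-terms I set $\tsigma(z_1)\diverges$, which already forces $\tsigma(E)\diverges$ because the conditional is strict in its test, and I try to keep $\tsigma(F)$ convergent. Taking $\tsigma(w_1)=\true$ this succeeds provided $\tsigma(w_2)\converges$: if $s=\ind$ this is automatic, and if $s=\bool$ it holds unless $w_2\equiv z_1$; symmetrically, taking $\tsigma(w_1)=\false$ succeeds unless $w_3\equiv z_1$. In every case this yields the desired separation ($\tsigma(E)\diverges$ while $\tsigma(F)\converges$), contradicting the hypothesis, \emph{except} in the residual all-$\bool$ situation $w_2\equiv w_3\equiv z_1$ with $z_1\not\equiv w_1$. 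In particular, when $s=\ind$ this residual case cannot arise on sort grounds, so the test identity is settled at once.

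To dispatch the residual case (necessarily $s=\bool$), note that then $F\equiv\tif w_1~\tthen z_1~\telse z_1$, whose value is that of $z_1$ when $w_1$ converges and is $\diverges$ otherwise. Probing with $\tsigma(z_1)=\true$ and $\tsigma(w_1)\diverges$ gives $\tsigma(F)\diverges$ while $\tsigma(E)=\tsigma(z_2)$, so the hypothesis forces $z_2\equiv w_1$ --- any other token for $z_2$ could be assigned a convergent value, separating the two sides; the symmetric probe with $\tsigma(z_1)=\false$ forces $z_3\equiv w_1$. But then $E\equiv\tif z_1~\tthen w_1~\telse w_1$ and $F\equiv\tif w_1~\tthen z_1~\telse z_1$, and the single assignment $\tsigma(z_1)=\true$, $\tsigma(w_1)=\false$ gives $\tsigma(E)=\false\neq\true=\tsigma(F)$, a contradiction. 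Hence $z_1\equiv w_1$ in all cases.

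With $z_1\equiv w_1$ in hand, the branches separate directly: setting the common test to $\true$ makes $\tsigma(E)=\tsigma(z_2)$ and $\tsigma(F)=\tsigma(w_2)$, so $\A\models z_2=w_2$ between immediate terms, whence $z_2\equiv w_2$ --- by Lemma~\ref{purealgebraic} when $s=\ind$, and because two distinct immediate $\bool$-terms other than the test can be driven to different truth values when $s=\bool$. Setting the test to $\false$ yields $z_3\equiv w_3$ in the same way, completing the proof. I expect the main obstacle to be the all-$\bool$ case, where a branch may be literally the same token as the test, so the naive ``set the test, read off the branch'' argument breaks down; there the strictness of the conditional together with the freedom to make distinct immediate $\bool$-terms diverge is exactly what restores enough independence to pin down all six tokens.
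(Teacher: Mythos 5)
Your proof is correct and takes essentially the same route as the paper's: the same technique of probing distinct immediate boolean terms with independently chosen values $\true$, $\false$, $\diverges$, the same exceptional configuration $w_2\equiv w_3\equiv z_1$, $z_2\equiv z_3\equiv w_1$ in the all-$\bool$ case, and the same final crossing assignment $\tsigma(z_1)=\true$, $\tsigma(w_1)=\false$ to dispatch it. The differences are purely organizational (you settle the test first and treat the two sorts uniformly, while the paper splits at the outset into the case where the branches have sort $\ind$ and the case where everything has sort $\bool$), together with one harmless informality: ``$\A\models z_2=w_2$'' should read ``$\sigma(z_2)=\sigma(w_2)$ for every assignment $\sigma$ with $\sigma(z_1)=\true$'', which is in fact what your subsequent argument uses.
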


\begin{proof}
Assume $\A\models \tif z_1~\tthen z_2~\telse z_3=\tif
w_1~\tthen w_2~\telse w_3$, note that $z_1,w_1$ must be of sort
$\bool$ and consider cases:\smallskip

\textit{Case} (a): $z_2,z_3,w_2,w_3$ are of sort $\ind$. If $z_1\not\equiv w_1$,
extend $\tsigma$ so that
\[
\tsigma(z_1)=\true, \quad \tsigma(w_1)\diverges
\]
which gives $\tsigma(\tif z_1~\tthen z_2~\telse z_3)=\tsigma(z_2) \text{
while } \tsigma(\tif w_1~\tthen w_2~\telse w_3)\diverges$,  hence
$z_1\equiv w_1$. Setting $\tsigma(z_1)=\tsigma(w_1)=\true$ gives $z_2\equiv w_2$ and
setting $\tsigma(z_1)=\false$ gives $z_3\equiv w_3$.\smallskip

\textit{Case} (b): all $z_1,z_2,z_3,w_1,w_2,w_3$ are of sort $\bool$.  Assume first,
towards a contradiction that $z_1\not\equiv w_1$, and set
\[
\tsigma(z_1)=\true, \quad \tsigma(w_1)\diverges.
\]
If $z_2\not\equiv w_1$, we can further set $\tsigma(z_2)=\true$ (even if $z_2\equiv z_1$) and
get a contradiction, so $z_2\equiv w_1$. The same argument, with $\tsigma(z_1)=\false$ this time
gives $z_3\equiv w_1$; and the symmetric argument gives $w_2\equiv w_3\equiv z_1$,
so what we have now is
\[
\A\models \tif z_1~\tthen w_1~\telse w_1 = \tif w_1~\tthen z_1~\telse z_1
\]
with the added hypothesis that $z_1\not\equiv w_1$; but then we can set
\[
\tsigma(z_1)=\true, \quad \tsigma(w_1)=\false,
\]
which gives
\[
\tsigma(\tif z_1~\tthen w_1~\telse w_1) = \tsigma(w_1)=\false,
\quad
\tsigma(\tif w_1~\tthen z_1~\telse z_1)=\tsigma(z_1)=\true,
\]
which violates the hypothesis; so $z_1\equiv w_1$, and the hypothesis in Case (b) takes the
form
\[
\A\models \tif z_1~\tthen z_2~\telse z_3 = \tif z_1~\tthen w_2~\telse w_3.
\]

Now assume towards a contradiction that $z_2\not\equiv w_2$; so one of them is different
from $z_1$, suppose it is $w_2$; so we now get a contradiction by setting
$\tsigma(z_1)=\tsigma(z_2)=\true$ and $\tsigma(w_2)=\false$.

At this point we know that $z_1\equiv w_1$ and $z_2\equiv w_2$, and the last bit that
$z_3\equiv w_3$ is simpler.
\end{proof}

\begin{lemma}[\ef{2}{3}]
\label{2-3}
If $z_1,\ldots,z_m, w_1, w_2, w_3$ are all immediate, then
\begin{multline*}
\A\models
\f p(z_1,\ldots,z_m)=\tif w_1~\tthen w_2~\telse w_3
\\
\iff \f p(z_1,\ldots,z_m)\equiv w_1\equiv w_2\equiv w_3.
\end{multline*}
\end{lemma}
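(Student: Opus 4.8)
The plan is to prove the two implications separately, with essentially all the work in the forward direction. Write $E\equiv\f p(z_1,\dots,z_m)$ for the left term and note that, since the two sides of a valid identity must share a sort, $\sort(\f p)=\sort(w_2)=\sort(w_3)$ while $w_1$ is of sort $\bool$. For the backward direction assume $\f p(z_1,\dots,z_m)\equiv w_1\equiv w_2\equiv w_3$; then all four terms are literally the one term $w_1$, which is of sort $\bool$ since it is a guard, and the right-hand side is $\tif w_1~\tthen w_1~\telse w_1$. By the valid identity $\models\tif F~\tthen F~\telse F=F$ of Problem~\ref{congintequalprob2}, this denotes the same partial function as $w_1\equiv E$, so $\A\models E=\tif w_1~\tthen w_2~\telse w_3$.

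For the forward direction I would split on $\sort(\f p)$. If $\sort(\f p)=\ind$, the conclusion $\f p(z_1,\dots,z_m)\equiv w_1$ is impossible because $w_1$ is of sort $\bool$, so it is enough to show the identity fails. Since $w_1$ is immediate of sort $\bool$ it is a boolean function-variable application, and I can extend $\tsigma$ so that $\tsigma(w_1)\diverges$, forcing the right-hand side to diverge; but $\tsigma(E)\converges$ because $\tsigma$ interprets the individual function variable $\f p$ by a total injection. Thus $\A,\tsigma\not\models E=\tif w_1~\tthen w_2~\telse w_3$, so $\A\not\models$, and both sides of the stated equivalence are false.

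The substantive case is $\sort(\f p)=\bool$, where all four terms are boolean. The key tool is that $\tsigma$ is undefined on boolean function variables and separates immediate individual terms into the pairwise disjoint ranges $A^\ind,A^{\ind,n}_i$; consequently, for the application $E=\f p(\vec z)$ and any immediate boolean term $w=\f r(\vec u)$, the value-points $(\f p,\tsigma(\vec z))$ and $(\f r,\tsigma(\vec u))$ coincide exactly when $E\equiv w$, so their boolean values may be assigned independently whenever $E\not\equiv w$. First I would show $E\equiv w_1$: otherwise extend $\tsigma$ so that $\tsigma(w_1)\diverges$ while $\tsigma(E)=\true$ (legitimate by independence, as $E\not\equiv w_1$), making the left side $\true$ and the right side divergent, a contradiction. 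With $w_1\equiv E$ (so the $z_i$ are individual variables and the hypothesis reads $\A\models E=\tif E~\tthen w_2~\telse w_3$), evaluating under any assignment with $\tsigma(E)=\true$ forces $\tsigma(w_2)=\true$, and with $\tsigma(E)=\false$ forces $\tsigma(w_3)=\false$. Finally, if $E\not\equiv w_2$ then by independence I can set $\tsigma(E)=\true$ and $\tsigma(w_2)=\false$, contradicting the first implication; symmetrically $E\equiv w_3$. Hence $E\equiv w_1\equiv w_2\equiv w_3$.

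The main obstacle is the bookkeeping behind each ``assign these two values independently'' step: one must verify that the head function variable being pinned down occurs only where intended — a boolean head cannot occur inside the individual terms $\vec z$, nor inside an immediate term's individual-variable arguments — and that equality of the relevant $\tsigma$-value-points is equivalent to syntactic congruence of the two applications, which is precisely where the disjointness of the ranges of $\tsigma$ together with its injectivity on individual terms (Lemma~\ref{purealgebraic}) enters. The nullary cases $m=0$ and nullary $w_i$ are handled uniformly and need no separate argument.
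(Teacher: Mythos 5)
Your proof is correct and takes essentially the same approach as the paper's: split on $\sort(\f p)$, use an assignment making $\f p(z_1,\ldots,z_m)$ convergent but $w_1$ divergent to force $\f p(z_1,\ldots,z_m)\equiv w_1$, then separate $w_2$ (resp.\ $w_3$) from $w_1$ under $\tsigma(w_1)=\true$ (resp.\ $\false$); your ``independence'' bookkeeping is exactly the trivial fact the paper records just before these lemmas. The only differences are cosmetic: you make explicit the trivial converse direction via $\models \tif F~\tthen F~\telse F = F$, which the paper leaves implicit, and you use $\tsigma(w_2)=\false$ where the paper uses divergence.
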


\begin{proof}
Notice first that if $\f p$ is of sort $\ind$, then
\[
\A,\sigma \not\models \f p(z_1,\ldots,z_m)=\tif w_1~\tthen w_2~\telse w_3
\]
for any $\sigma$ which converges on $\f p(z_1,\ldots,z_m)$
but such that $\sigma(w_1)\diverges$.\smallskip

If $\f p$ is of sort $\bool$ and $\f p(z_1,\ldots,z_m)\not\equiv w_1$,
then we can get a $\sigma$ such that
$\sigma(\f p(z_1,\ldots,z_m))\converges$ but $\sigma(w_1)\diverges$
so the hypothesis
fails again. It follows that $\f p(z_1,\ldots,z_m)\equiv w_1$ and the
hypothesis takes the form
\[
\A\models w_1=\tif w_1~\tthen w_2~\telse w_3.
\]
If $w_2\not\equiv w_1$, we get a contradiction by setting
$\sigma(w_1)=\true, \sigma(w_2)\diverges$,
and similarly if $w_1\not\equiv w_3$.
\end{proof}

\begin{lemma}[\ef{3}{4}]
\label{3-4}
If $z_1,\ldots,z_m, w_1, w_2, w_3$ are all immediate, then
\[
\A\not\models
\phi(z_1,\ldots,z_m)=\tif w_1~\tthen w_2~\telse w_3.
\]
\end{lemma}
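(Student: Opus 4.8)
The plan is to disprove the universal statement $\A\models\phi(z_1,\ldots,z_m)=\tif w_1~\tthen w_2~\telse w_3$ by exhibiting a single extension of $\tsigma$ under which the two sides take different values. The key observation is that the left-hand side is already pinned down by the $\ind$-part of $\tsigma$: the arguments $z_1,\ldots,z_m$ are immediate of sort $\ind$, so each $\tsigma(z_j)$ is determined by the (total, injective) $\ind$-assignment of Lemma~\ref{purealgebraic}, and hence the value $\phi^\A(\tsigma(z_1),\ldots,\tsigma(z_m))$ is fixed once and for all, independently of how we later assign the Boolean function variables. By contrast the conditional on the right can be steered either to converge or to diverge by choosing the value of its guard $w_1$, which is immediate of sort $\bool$. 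Since the guard $w_1$ (and $w_2$, when it is of sort $\bool$) is built from Boolean function variables whereas each $z_j$ is built from $\ind$-sort symbols only, these choices on the right never disturb the left-hand side.

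First I would split on the convergence of the left-hand side. If $\phi^\A(\tsigma(z_1),\ldots,\tsigma(z_m))\converges$, I extend $\tsigma$ so that $\tsigma(w_1)\diverges$, using the freedom to assign Boolean function variables recorded just before Lemma~\ref{2-2}; then the guard diverges, so by strictness of the conditional in its guard the right-hand side diverges while the left converges, giving $\A,\tsigma\not\models$ the identity (the values of $w_2,w_3$ are irrelevant here). If instead $\phi^\A(\tsigma(z_1),\ldots,\tsigma(z_m))\diverges$, I set $\tsigma(w_1)=\true$ and arrange $\tsigma(w_2)\converges$, so that the right-hand side converges to $\tsigma(w_2)$ while the left diverges. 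That $\tsigma(w_2)$ can be made to converge is automatic when $w_2$ is of sort $\ind$ (the $\ind$-part of $\tsigma$ is total) and is otherwise obtained by assigning $\tsigma(w_2)=\true$, which is consistent with $\tsigma(w_1)=\true$ whether or not $w_1\equiv w_2$.

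In either case we have produced an assignment witnessing $\A,\tsigma\not\models\phi(z_1,\ldots,z_m)=\tif w_1~\tthen w_2~\telse w_3$, which is exactly the asserted $\A\not\models$. The only point requiring explicit care is the independence noted in the first paragraph: because the Boolean function variables we manipulate on the right cannot occur in the $\ind$-immediate terms $z_j$, the two sides are controlled separately, so the choice of guard value cannot accidentally change the fixed behaviour of $\phi(z_1,\ldots,z_m)$. I do not expect a genuine obstacle; like the neighbouring cases \tu{(2,3)} and \tu{(3,3)}, this is a short argument exploiting the partiality of the conditional's guard, and the whole content is the diagonal choice ``make the conditional do the opposite of what the primitive application does.''
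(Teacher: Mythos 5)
Your proof is correct and takes essentially the same route as the paper's: split on whether $\phi^\A(\tsigma(z_1),\ldots,\tsigma(z_m))$ converges, make the guard diverge in the first case, and set $\tsigma(w_1)=\true$ with a convergent $\tsigma(w_2)$ in the second. The paper merely splits your second case into two according to the sort of $\phi$, and your independence observation (bool-sort function variables cannot occur in the $\ind$-immediate $z_j$) is the same point the paper records at the outset as $w_1\not\equiv z_i$ by sort considerations.
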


\begin{proof}
Notice that $w_1\not\equiv z_i$, since $\sort(w_1)=\bool$
while $\sort(z_i)=\ind$ and consider three cases:\smallskip

If $\phi^\A(\tsigma(z_1),\ldots,\tsigma(z_m))\converges$, extend
$\tsigma$ by setting $\tsigma(w_1)\diverges$ so
\begin{equation}
\tag{$\ast$}
\label{star34}
\A,\tsigma\not\models
\phi(z_1,\ldots,z_m)=\tif w_1~\tthen w_2~\telse w_3.
\end{equation}

If $\phi^\A(\tsigma(z_1),\ldots,\tsigma(z_m))\diverges$ and $\phi$ is of sort $\ind$,
then setting now $\tsigma(w_1)=\true$ gives~\eqref{star34} again,
since $\tsigma(w_2)\converges$.\smallskip

If $\phi^\A(\tsigma(z_1),\ldots,\tsigma(z_m))\diverges$ and $\phi$ is of sort $\bool$,
then we can set $\tsigma(w_1)=\tsigma(w_2)=\true$ which again gives~\eqref{star34}.
\end{proof}

\begin{lemma}
\label{philemma1}
If $z_1, \ldots, z_k, w_1, \ldots,w_m$ are all immediate, then
\begin{multline*}
\text{if }\A\models \phi(z_1,\ldots,z_k) = \f p(w_1,\ldots,w_m),\\
\text{then $\f p(w_1,\ldots,w_m)\equiv z_j$ for some $j$}\\
\text{and so }\A\models \phi(z_1,\ldots,z_k)=z_j=\id(z_j).
\end{multline*}
In particular, with $k>0$ and $m=0$,
\[
\A\models \phi(z_1,\ldots,z_k) = \f p \implies
\f p\equiv z_j \text{ for some }j,
\]
and with $k=0$, $\A\not\models \phi=\f p(z_1,\ldots,z_m)$.
\end{lemma}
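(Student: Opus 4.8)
The plan is to assume $\A\models\phi(z_1,\ldots,z_k)=\f p(w_1,\ldots,w_m)$ and to exploit that $\f p$ is a \emph{free} function variable, so that I may perturb its value at a single input tuple while keeping the canonical assignment $\tsigma$ of the section fixed everywhere else. First I would settle the sorts. If $\sort(\f p)=\bool$, then $\f p$ is not the head of any $z_i$ (these are of sort $\ind$) and occurs nowhere in $\phi(z_1,\ldots,z_k)$, so the left side is independent of the value of $\f p$; having fixed everything else, I can choose $\sigma(\f p)$ at the tuple $\sigma(w_1,\ldots,w_m)$ to converge to a truth value, diverge, or converge to a different value as the situation demands, always making the two sides disagree --- contradiction. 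Hence $\sort(\f p)=\ind$, and now an even simpler observation rules out $\sort(\phi)=\bool$: under $\tsigma$ the right side $\tsigma(\f p)(\tsigma(w_1,\ldots,w_m))$ converges to an individual (a member of the range set of $\f p$), whereas $\phi(z_1,\ldots,z_k)$ is boolean-valued or divergent and so cannot match it. Thus I may assume $\sort(\f p)=\sort(\phi)=\ind$.

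With both sorts equal to $\ind$ the hypothesis forces both sides to converge under $\tsigma$ to a common value $v=\tsigma(\f p)(\bar w)$, where $\bar w=\tsigma(w_1,\ldots,w_m)$. The core step is a single-point perturbation: let $\sigma'$ agree with $\tsigma$ except that $\sigma'(\f p)(\bar w)=v'$ for a chosen $v'\ne v$. Since each immediate $z_i$ can see $\f p$ only at its own argument tuple, the value $\phi^\A(\sigma'(z_1),\ldots,\sigma'(z_k))$ can differ from $v$ only if some $z_{i_0}$ has the form $\f p(\vec u_{i_0})$ with $\tsigma(\vec u_{i_0})=\bar w$. If no such $z_{i_0}$ exists the left side remains $v$ while the right side has become $v'\ne v$, contradicting $\A\models$; so such a $z_{i_0}$ must occur. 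Injectivity of $\tsigma$ now finishes the identification: from $\tsigma(\vec u_{i_0})=\tsigma(w_1,\ldots,w_m)$ componentwise, and because each component of $\vec u_{i_0}$ is an individual variable whose value lies in $A^\ind$ (disjoint from the ranges of the function variables), each $w_l$ is forced to be an individual variable equal to the corresponding $u_{i_0,l}$. Hence $\f p(w_1,\ldots,w_m)\equiv z_{i_0}$, and then $\A\models\phi(z_1,\ldots,z_k)=z_{i_0}=\id(z_{i_0})$ since $\id^\A$ is the identity.

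The delicate point --- the step I expect to cost the most care --- is checking that the perturbation leaves the \emph{right-hand} argument tuple unchanged, that is $\sigma'(w_1,\ldots,w_m)=\bar w$, so that the right side really does become $v'$. This can fail only if some $w_l$ is itself headed by $\f p$, say $w_l\equiv\f p(\vec t_l)$ with $\tsigma(\vec t_l)=\bar w$; but the same injectivity argument would then force $\vec w\equiv\vec t_l$ to be a tuple of individual variables, whence $w_l\equiv\f p(w_1,\ldots,w_m)$ --- impossible, since $w_l$ would then be at once an individual variable and a proper application containing itself. Ruling out this self-containment makes the perturbation clean. The two ``in particular'' clauses are then immediate: for $k>0,\ m=0$ the conclusion $\f p\equiv z_j$ is exactly the main statement for nullary $\f p$, and for $k=0$ there is no $z_j$ available, so the main statement can hold only vacuously, i.e.\ $\A\not\models\phi=\f p(w_1,\ldots,w_m)$.
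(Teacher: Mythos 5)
Your proposal is correct and takes essentially the same approach as the paper: both proofs exploit the freedom to redefine $\f p$ at the single tuple $\tsigma(w_1,\ldots,w_m)$, use the injectivity and disjoint ranges of the canonical assignment $\tsigma$ (the content of Lemma~\ref{purealgebraic}) to show this point is not ``seen'' by the left-hand side unless $\f p(w_1,\ldots,w_m)$ literally occurs among the $z_i$, and use the proper-subterm observation to show it is not seen by the $w_j$ either. The paper packages this as extending a partial assignment defined only on ``relevant'' terms, whereas you perturb the total assignment $\tsigma$ at one point and argue directly rather than by contradiction; the difference is presentational, not mathematical.
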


\begin{proof}
Assume the hypothesis $\A\models \phi(z_1,\ldots,z_k) =
\f p(w_1,\ldots,w_m)$ and also that $\f p(w_1,\ldots,w_m)\not\equiv z_j$ for any $j$.\smallskip

(a) $\sort(\phi)=\ind$; because if $\sort(\phi)=\bool$, then
\begin{multline*}
\text{either }\phi^\A(\tsigma(z_1),\ldots,\tsigma(z_k)))\converges
\text{ and we can set }\tsigma(\f p)(\tsigma(z_1),\ldots,\tsigma(z_k))\diverges\\
\text{or } \phi^\A(\tsigma(z_1),\ldots,\tsigma(z_k)))\diverges
\text{ and we can set }\tsigma(\f p)(\tsigma(z_1),\ldots,\tsigma(z_k))\converges.
\end{multline*}

($\ast$) Call $E$ \textit{relevant} if $E\equiv z_i$ or $E\equiv w_i$ for
some $i$, set $\sigma(\f x) = \tsigma(\f x)$ for every individual
variable which occurs in a relevant term, and for a function variable $\f q^{\ind,n}$ of sort
$\ind$ and arity $n$, set
\begin{multline*}
\sigma(\f q^{\ind,n})(x_1,\ldots,x_n) = v \eedf \text{ there is a relevant }F
   \equiv \f q^{\ind,n}(\f x_1,\ldots,\f x_n)\\
\text{such that }x_1=\tsigma(\f x_1), \ldots, x_n=\tsigma(\f x_n),
\conj\tsigma(\f q^{\ind,n})(x_1,\ldots,x_n)=v.
\end{multline*}

(b) \textit{For each pure, algebraic term $E$, $\sigma(E)$ is
defined exactly when $E$ is relevant and then
}$\sigma(E)=\tsigma(E)$. This is because by the definition, if
$\sigma(E)$ is defined, then $\sigma(E)=\tsigma(E)=\tsigma(F)$ for
a relevant $F$ and hence $E\equiv F$ by
Lemma~\ref{purealgebraic}.\smallskip

It follows that $\sigma(\f p(w_1,\ldots,w_m))$ is not defined by the stipulation
($\ast$), since
$\f p(w_1,\ldots,w_m)\not\equiv z_i$ by the hypothesis and each $w_j$
is a proper subterm of  $\f p(w_1,\ldots,w_m)$, so it cannot be equal
to it; and then we reach a contradiction by setting $\sigma(\f p)$ so that
\begin{endproofeqnarray*}
\sigma(\f p)(\tsigma(w_1),\ldots,\tsigma(w_m))\neq\phi^\A(\tsigma(z_1),\ldots,\tsigma(z_k).
\end{endproofeqnarray*}
\end{proof}

\begin{lemma}
\label{redto44}

If there is an algorithm which decides the validity in $\A$ of identities in form
\ef{4}{4}.
\[
\A\models \phi(z_1,\ldots,z_n)=\psi(z_{n+1},\ldots,z_l)\quad
(\phi,\psi\in\Phi,z_1,\ldots,z_l \textup{ immediate}),
\]
then the relation of intensional equivalence on $\A$ between
extended $\Phi$-programs is decidable.

\end{lemma}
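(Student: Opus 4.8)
The plan is to assemble a decision procedure for $\inteq{\A}$ out of three ingredients already in place: the effective computation of canonical forms (Theorem~\ref{cfthm}), the combinatorial characterization of recursor equality (Lemma~\ref{recprogiso}), and the case analysis of irreducible identities carried out in the preceding lemmas. Given $E(\vec{\f x})$ and $F(\vec{\f x})$, first I would compute canonical forms $\nf(E)(\vec{\f x})$ and $\nf(F)(\vec{\f x})$; by the definition~\eqref{refintdef} of the intension together with~\eqref{congintequal}, $E(\vec{\f x})\inteq{\A}F(\vec{\f x})$ holds exactly when the recursors $\recursor(\A,\nf(E)(\vec{\f x}))$ and $\recursor(\A,\nf(F)(\vec{\f x}))$ are equal. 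By Lemma~\ref{recprogiso} this in turn holds iff the two canonical forms have the same number $K+1$ of parts and some permutation $\pi$ of $\{0,\ldots,K\}$ fixing $0$ validates in $\A$ the $K+1$ identities~\eqref{recprogisochar} between corresponding (substituted) parts. Since there are only finitely many such $\pi$ and each induced identity is irreducible by~\eqref{alphchange}, the whole question reduces to deciding the validity in $\A$ of \emph{irreducible term identities}.

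Next I would invoke the classification of irreducible terms into the four forms (1)--(4) (page~\pageref{irrterms}), so that every irreducible identity falls into one of the ten shapes \ef{i}{j}. The preceding lemmas already dispose of most of these: \ef{2}{2}, \ef{2}{3}, \ef{3}{3}, and \ef{3}{4} are settled by Lemmas~\ref{2-2}, \ref{2-3}, \ref{3-3}, and~\ref{3-4} (the last always failing), each reducing validity to a decidable syntactic condition; and \ef{2}{4} is handled by Lemma~\ref{philemma1}, which reduces $\A\models\f p(w_1,\ldots,w_m)=\phi(z_1,\ldots,z_k)$ to the syntactic test ``$\f p(w_1,\ldots,w_m)\equiv z_j$ for some $j$'' together with the single query $\A\models\phi(z_1,\ldots,z_k)=\id(z_j)$, an identity in form \ef{4}{4}. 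The remaining forms all have a side in form (1), and these are easy: \ef{1}{2} and \ef{1}{3} never hold, because the form-(1) side always converges whereas the other side---a function-variable application, or a conditional whose test is a function-variable application---can be forced to diverge by extending $\tsigma$; and \ef{1}{1} is a trivial comparison of individual variables and boolean constants.

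The one point that genuinely uses the auxiliary primitives fixed at the start of the Section is \ef{1}{4}. Here I would use the equivalences $\A\models\f v=\id(\f v)$, $\A\models\true=\psi_\true$, and $\A\models\false=\psi_\false$ to rewrite an identity $\f v=\phi(z_1,\ldots,z_k)$ (respectively $\true=\phi(z_1,\ldots,z_k)$, $\false=\phi(z_1,\ldots,z_k)$) as the provably equivalent $\A\models\id(\f v)=\phi(z_1,\ldots,z_k)$ (resp.\ $\A\models\psi_\true=\phi(z_1,\ldots,z_k)$, $\A\models\psi_\false=\phi(z_1,\ldots,z_k)$), each of which is in form \ef{4}{4}. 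Thus every irreducible identity is either trivially decidable, settled by a previous lemma, or converted---using only finitely many \ef{4}{4} queries---into the form covered by the hypothesis. Feeding these decisions back into the permutation search of the first paragraph yields the claimed algorithm.

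I expect the main obstacle to be not any single hard argument but the discipline of verifying that the ten-way case split is genuinely exhaustive and that the reductions to \ef{4}{4} are valid in both directions; in particular one must check that Lemma~\ref{philemma1} yields an ``iff'' (its converse being immediate from $\A\models z_j=\id(z_j)$) and that the divergence arguments for \ef{1}{2} and \ef{1}{3} really apply, \ie that the relevant immediate subterm is a function-variable application rather than a boolean constant. Everything genuinely semantic has already been absorbed into the earlier lemmas and into the hypothesis on \ef{4}{4}, so what remains is careful assembly.
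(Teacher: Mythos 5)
Your proposal is correct and follows essentially the same route as the paper: reduce intensional equivalence to finitely many irreducible identities via canonical forms, Lemma~\ref{recprogiso} and~\eqref{alphchange}; split into the ten forms; dispatch \ef{1}{1}, \ef{1}{2}, \ef{1}{3}, \ef{2}{2}, \ef{2}{3}, \ef{3}{3}, \ef{3}{4} by the trivial arguments and the preceding lemmas; and use the assumed primitives $\psi_\true$, $\psi_\false$, $\id$ together with Lemma~\ref{philemma1} to convert the remaining forms into \ef{4}{4} queries. The only deviations are cosmetic and harmless: you rewrite $\f v=\phi(z_1,\ldots,z_k)$ directly as $\id(\f v)=\phi(z_1,\ldots,z_k)$ where the paper invokes Problem~\ref{14} to replace it by $\phi(z_1,\ldots,z_k)=\id(z_j)$ with $\f v\equiv z_j$, and in your divergence argument for \ef{1}{2} with an $\ind$-sorted function variable one must choose a fresh divergent assignment rather than ``extend'' $\tsigma$, since $\tsigma$ is already total on those variables.
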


\begin{proof}

\ynm{It is useful to assume here that there are nullary constants $\phi_\true,
\phi_\false$ of sort $\bool$ and a unary function symbol $\id$ of
sort $\id$ in $\Phi$ such that
\begin{equation}
\label{tfid}
\phi_\true^\A = \true,\quad \phi_\false^\A=\false \text{~~ and ~~ }\id^\A(x) = x,
\end{equation}
by adding them to $\Phi$ if necessary.\smallskip}

Forms \ef{1}{1}, \ef{1}{2} and \ef{1}{3} are trivial, \cf Problem~\ref{1-123}.\smallskip

Form \ef{2}{2} is decided by Lemma~\ref{2-2} and forms \ef{2}{3}, \ef{3}{3}
and \ef{3}{4} are decided by Lemmas~\ref{2-3}, \ref{3-3} and \ref{3-4}.\smallskip

Identities in forms \ef{4}{1} (equivalent to \ef{1}{4}) and \ef{4}{2}
(equivalent to \ef{2}{4}) are either
\[
\phi(z_1,\ldots,z_n)=\true=\psi_\true \text{ and }\phi(z_1,\ldots,z_n)=\false=\psi_\false
\]
which are in form \ef{4}{4} with our assumption that $\psi_\true, \psi_\false\in\Phi$,
or by Lemma~\ref{philemma1} and our assumption that $\id\in\Phi$,
they are equivalent in $\A$ to identities
\[
\phi(z_1,\ldots,z_n)=z_j=\id(z_j)
\]
for some $j$ (that we can compute), \cf Problem~\ref{14}.
\end{proof}

\textit{\bfseries At this point there is a fork in the road}: it is very easy to
finish the proof of Theorem~\ref{localdec} for total structures,
while the general case of arbitrary partial structures involves
some technical difficulties. We do the simple thing first.

\begin{lemma}
\label{totalirreducible}

Suppose $\A$ is a total, infinite, $\Phi$-structure,
$\phi, \psi\in\Phi$  and $z_1,\ldots,z_l$ are immediate terms.\smallskip

\tu{1} If $\A\models \phi(z_1,\ldots,z_n)=\psi(z_{n+1},\ldots,z_l)$,
then for all $i=1,\ldots,n$,
\[
\text{if $z_i$ is not an individual variable, then there
is a $j$ such that }z_i\equiv z_{n+j}.
\]

\tu{2} There is a sequence $\f x_1,\ldots,\f x_l$ of
\textup{(not necessarily distinct)} individual variables such that
\[
(\forall i,j)\Big(\f x_i\equiv \f x_j \iff z_i\equiv z_j\Big),
\]
and for any such sequence
\begin{multline}
\label{totalred}
\A\models \phi(z_1,\ldots,z_n)=\psi(z_{n+1},\ldots,z_l)\\
\iff
\A\models \phi(\f x_1,\ldots,\f x_n)=\psi(\f x_{n+1},\ldots,\f x_l).
\end{multline}

\end{lemma}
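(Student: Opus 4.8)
The plan is to prove (1) by forcing a divergence on one side of the identity while keeping the other side convergent, and then to obtain (2) by transporting assignments back and forth between the $z$-identity and the $\f x$-identity, using (1) to guarantee that the two sides diverge in step. Throughout I use the injective assignment $\tsigma$ fixed in the preliminary constructions, together with Lemma~\ref{purealgebraic} (immediate terms of sort $\ind$ are pure algebraic, so distinct ones get distinct $\tsigma$-values), and the fact that application of a primitive is strict, so $\phi^\A(\ldots)$ diverges as soon as one of its arguments diverges.

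For (1), assume $\A\models\phi(z_1,\ldots,z_n)=\psi(z_{n+1},\ldots,z_l)$ and fix $i\leq n$ with $z_i$ not an individual variable, so $z_i$ has a head function variable $\f p$ (with $z_i\equiv\f p$ if $\f p$ is nullary, otherwise $z_i\equiv\f p(\f u_1,\ldots,\f u_m)$). Suppose towards a contradiction that $z_i\not\equiv z_{n+j}$ for every $j$. I modify $\tsigma$ only at $\f p$, making it diverge at the single argument tuple $(\tsigma(\f u_1),\ldots,\tsigma(\f u_m))$ (respectively making the nullary $\f p$ diverge) and leaving it unchanged elsewhere; call the result $\sigma$. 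Then $\sigma(z_i)\diverges$, so the left side diverges. Every $z_{n+j}$ still converges: if it does not have head $\f p$ it is untouched, and if $z_{n+j}\equiv\f p(\f w_1,\ldots,\f w_m)$ then $z_{n+j}\not\equiv z_i$ forces $\f w_k\not\equiv\f u_k$ for some $k$, whence $\tsigma(\f w_k)\neq\tsigma(\f u_k)$ by injectivity and the argument tuple of $z_{n+j}$ differs from that of $z_i$, so $\sigma$ is unchanged on it and it converges. Thus the right side converges while the left diverges, contradicting the identity. Since equality is symmetric, the mirror statement (every non-variable $z_{n+j}$ is some $z_i$, $i\leq n$) follows by applying the same argument to $\psi(\ldots)=\phi(\ldots)$.

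For (2), the sequence $\f x_1,\ldots,\f x_l$ exists at once: put fresh individual variables in one-to-one correspondence with the $\equiv$-classes of $z_1,\ldots,z_l$ and let $\f x_k$ be the variable attached to the class of $z_k$, so that $\f x_k\equiv\f x_{k'}\iff z_k\equiv z_{k'}$. For the biconditional I would relate the two identities through assignments. On the side where the $z$-identity holds, (1) and its mirror form show that the left side diverges exactly when some non-variable argument — necessarily shared with the right side — diverges, hence exactly when the right side diverges; so the two sides diverge together, and on convergent $\sigma$ the values $b_k=\sigma(z_k)$ satisfy $z_k\equiv z_{k'}\implies b_k=b_{k'}$, so feeding $b$ into the $\f x$-identity yields $\phi^\A(b_1,\ldots,b_n)=\psi^\A(b_{n+1},\ldots,b_l)$, which is the equality of the two sides under $\sigma$. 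Conversely, given any values $b_1,\ldots,b_l$ with $b_k=b_{k'}$ whenever $z_k\equiv z_{k'}$, I would build a single assignment $\sigma$ realizing $\sigma(z_k)=b_k$ for all $k$ — setting individual variables directly and then defining each head function variable at the relevant argument tuple — so that $\sigma$ turns the $z$-identity into the $\f x$-identity evaluated at $b$.

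The routine parts are the bookkeeping with $\tsigma$ in (1) and the convergent-case computation in (2). The main obstacle is the realizability step together with the divergence-matching: one must produce one assignment under which each immediate term takes its prescribed value, which requires that distinct immediate terms be independently controllable (no two distinct terms forced into the same value by the shared individual variables occurring inside them), and one must ensure the two sides of the identity diverge in step — precisely the role of (1), which couples the non-variable arguments of the two sides. Both points are where the structure of irreducible terms and part~(1) have to be used with care.
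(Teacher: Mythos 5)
Your part (1) is correct and is essentially the paper's own argument: starting from the injective assignment $\tsigma$, redefine $\sigma(\f p)$ to diverge at the single argument tuple $(\tsigma(\f u_1),\ldots,\tsigma(\f u_m))$; Lemma~\ref{purealgebraic} (injectivity of $\tsigma$ on pure algebraic terms) guarantees that no $z_{n+j}\not\equiv z_i$ is touched, strictness of primitive application makes the left side diverge, and totality of $\psi^\A$ makes the right side converge.

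Part (2) is where your proposal has a genuine gap, and it sits exactly at the step you yourself flagged as ``the main obstacle'': distinct immediate terms are \emph{not} independently controllable, and no amount of bookkeeping makes them so. Your transfer argument establishes the bare identity only at tuples $(b_1,\ldots,b_l)$ that are realizable as $(\sigma(z_1),\ldots,\sigma(z_l))$ for a single assignment $\sigma$, and the condition $z_k\equiv z_{k'}\implies b_k=b_{k'}$ does \emph{not} characterize such tuples. Concretely, take
\[
\theta:\quad \phi(\f u,\f v,\f p(\f u),\f p(\f v))=\psi(\f u,\f v,\f p(\f u),\f p(\f v)),
\]
whose associated bare identity is $\phi(\f x_1,\f x_2,\f x_3,\f x_4)=\psi(\f x_1,\f x_2,\f x_3,\f x_4)$ with four distinct individual variables. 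Any assignment with $\sigma(\f u)=\sigma(\f v)$ forces $\sigma(\f p(\f u))=\sigma(\f p(\f v))$, so tuples $(a,a,c,d)$ with $c\neq d$ are never realized, yet they are legitimate instances of the bare identity. And this is not a repairable defect of the write-up: the biconditional~\eqref{totalred} itself fails here. Define a total structure with $\phi^\A(a,b,c,d)=c$ and $\psi^\A(a,b,c,d)=c$ unless $a=b$ and $c\neq d$, in which case $\psi^\A(a,b,c,d)=d$. Then $\A\models\theta$ (if $\sigma(\f u)=\sigma(\f v)$ the third and fourth arguments automatically coincide on both sides, and any divergence of $\sigma(\f p)$ kills both sides simultaneously since the two sides have the same argument list), while the bare identity fails at $(a,a,c,d)$, $c\neq d$. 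Note that this conclusion of part (1) holds for $\theta$, so (1) cannot rescue the construction; note also that the paper's own proof of (2) asserts the existence of the realizing assignment in one phrase (``which is possible by the assumption relating $\f x_i$ and $z_i$''), which is precisely the step that fails. The phenomenon is the one the paper is forced to confront in the general partial case, where it is handled by injective validity $\models_\vinj$ together with the conjunction over all equivalence relations on the placed individual variables (Lemmas~\ref{phipsilemma} and~\ref{phipsilemma2}, Corollary~\ref{phipsicor}); the total case needs the same device, with~\eqref{totalred} replaced by a conjunction of bare identities, one for each way of identifying placed variables. So your unease was justified, but the obstacle you isolated is not a technicality to be handled ``with care''---it is a counterexample to the stated equivalence, and neither your argument nor the paper's closes it.
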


\begin{proof}
(1) Suppose towards a contradiction that
\[
\A\models \phi(z_1,\ldots,z_n)=\psi(z_{n+1},\ldots,z_l)
\]
but $z_i\equiv \f p(\f x_1,\ldots,\f x_k)$ is not identical with any
$z_{n+j}$ and define the assignment $\sigma$ which agrees with
$\tsigma$ on all individual and function variables except that
\[
\sigma(\f p)(\tsigma(\f x_1),\ldots,\tsigma(\f x_k))\diverges;
\]
this gives $\phi^\A(\sigma(z_1),\ldots,\sigma(z_n))\diverges$ and
$\phi^\A(\sigma_{n+1},\ldots\sigma(z_l))\converges$,
which contradicts the hypothesis.\smallskip

(2) To construct a sequence $\f x_1,\ldots, \f x_l$ of individual  variables with the
required property by induction on $i$:
pick a fresh $\f x_i$ if $z_i\not\equiv z_j$ for every  $j<i$ and otherwise let
$\f x_i :\equiv \f x_j \text{ for the least (and hence every) $j<i$ such that $z_i\equiv z_j$}$.

Suppose $\f x_1,\ldots,\f x_l$ are such that $\f x_i\equiv \f x_j \iff
z_i\equiv z_j$, assume that
\[
\A\models \phi(z_1,\ldots,z_n)=\psi(z_{n+1},\ldots,z_l)
\]
and for any assignment $\sigma$ define $\tau$ so that
\[
(\forall i=1,\ldots,k)[\tau(z_i) = \sigma(\f x_i)],
\]
which is possible by the assumption relating $\f x_i$ and $z_i$. The hypothesis gives
\[
\phi^\A(\tau(z_1),\ldots,\tau(z_n))=\psi^\A(\tau(z_{n+1}),\ldots,\tau(z_l)),
\]
which then implies
\[
\phi^\A(\sigma(\f x_1),\ldots,\sigma(\f x_n))=\psi^\A(\sigma(\f x_{n+1}),\ldots,\sigma(\f x_l)),
\]
and since $\sigma$ was arbitrary we get the required
\[
\A\models \phi(\f x_1,\ldots,\f x_n)=\psi(\f x_{n+1},\ldots,\f x_l).
\]
The converse is proved similarly.
\end{proof}

The point of the Lemma is that, for given $\phi,\psi$, there are
infinitely many identities which may or may not satisfy the
left-hand-side of~\eqref{totalred}, because there are infinitely many
immediate terms, \eg $z_1$ could be any one of
\[
\f p_1^1(\f v_1), \f p^2_1(\f v_1,\f v_1), \f p^3_1 (\f v_1, \f v_1, \f v_1), \ldots;
\]
while the right-hand-side of~\eqref{totalred} involves only
finitely many identities (up to alphabetic change),  which we can
exploit as follows:\smallskip

An \textbf{individual bare identity} in $\Phi$ is any identity in form \ef{4}{4}
\begin{equation}
\tag{$\ast$}
\theta : \phi(\f x_1,\ldots,\f x_k) = \psi(\f x_{k+1},\ldots,\f x_l),
\end{equation}
where the (not necessarily distinct) individual variables
$\f x_1,\ldots,\f x_l$ are chosen from the first $l$ entries in a fixed
list $\f v_0, \f v_1, \ldots$ of all individual variables; and the
\textbf{individual dictionary} of a total structure $\A$ is the
set
\begin{equation}
\tag{Ind-Dictionary}
\label{inddictionary}
D(\A) = \{\theta \st \theta \text{ is an individual bare identity and }\A\models \theta\}.
\end{equation}
This is a finite set (because $\Phi$ is finite) and by (2) of Lemma~\ref{totalirreducible},
for any identity in form \ef{4}{4} we can construct an individual bare identity such that
\begin{multline}
\label{inddicequiv}
\A\models \phi(z_1,\ldots,z_k)=\psi(z_{k+1},\ldots,z_l)\\
\iff \phi(\f x_1,\ldots,\f x_k)=\psi(\f x_{k+1},\ldots,\f x_l)\in D(\A);
\end{multline}
and then Lemma~\ref{redto44} then gives immediately

\begin{theorem}
\label{totaldecide}
For every total, infinite $\Phi$-structure $\A$, the relation of
intensional equivalence on $\A$ between extended $\Phi$-programs
is decidable.
\end{theorem}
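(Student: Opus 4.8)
The plan is to reduce intensional equivalence on $\A$ to membership in a single finite table, using the apparatus already in place. Given extended programs $E(\vec{\f x})$ and $F(\vec{\f x})$, I would first compute canonical forms $\nf(E)(\vec{\f x})$ and $\nf(F)(\vec{\f x})$ by Theorem~\ref{cfthm}; since by \eqref{refintdef} the intension is the recursor of the canonical form, $E(\vec{\f x})\inteq{\A}F(\vec{\f x})$ holds exactly when these two canonical forms define the same recursor on $\A$. Lemma~\ref{recprogiso} turns this into a finite search: the two canonical forms must have the same number of parts, and one looks for a permutation $\pi$ fixing the head ($\pi(0)=0$) such that the system of identities \eqref{recprogisochar} is valid in $\A$. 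There are only finitely many parts and finitely many permutations to try, and by \eqref{alphchange} every identity occurring in the system is between irreducible terms, so the whole question reduces to deciding the validity in $\A$ of irreducible term identities.

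This last problem is purely local and has already been pared down. Lemma~\ref{redto44} shows that a decision procedure for all irreducible identities exists as soon as one has a procedure for the single form \ef{4}{4}, namely $\A\models\phi(z_1,\ldots,z_n)=\psi(z_{n+1},\ldots,z_l)$ with $\phi,\psi\in\Phi$ and immediate arguments, because Lemmas~\ref{2-2}, \ref{2-3}, \ref{3-3}, \ref{3-4} and~\ref{philemma1} dispose of every other form outright. So for a total $\A$ it remains only to decide \ef{4}{4} identities.

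The difficulty here is that the immediate arguments $z_i$ range over infinitely many shapes (nested function-variable calls of arbitrary arity), so there is no obvious finite stock of \ef{4}{4} identities to tabulate. Totality is exactly what removes this obstruction. By part~(2) of Lemma~\ref{totalirreducible} I can replace the $z_i$ by individual variables $\f x_i$ recording only the equality pattern $\f x_i\equiv\f x_j\iff z_i\equiv z_j$, and \eqref{totalred} guarantees that validity is preserved; thus each \ef{4}{4} identity collapses to an individual bare identity as in \eqref{inddictionary}. Because $\Phi$ is finite, there are only finitely many such bare identities up to renaming, so the individual dictionary $D(\A)$ is a finite set that encodes, once and for all, everything an \ef{4}{4} query can detect about the primitives of $\A$. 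The decision procedure for \ef{4}{4} is then to build the matching $\f x_i$ by the construction in the proof of Lemma~\ref{totalirreducible}(2) and test membership of $\phi(\f x_1,\ldots,\f x_n)=\psi(\f x_{n+1},\ldots,\f x_l)$ in $D(\A)$, which is correct by \eqref{inddicequiv}. Chaining this with Lemma~\ref{redto44} and the reduction of the first paragraph yields the theorem.

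I expect the genuinely delicate point to be part~(1) of Lemma~\ref{totalirreducible}, the assertion that a non-variable immediate argument on one side must reappear on the other side: its proof makes a single function value diverge while the total primitive $\phi^\A$ keeps the opposite side convergent, and it is precisely this use of totality that fails for partial structures and forces the substantially more elaborate dictionary argument needed for the general Theorem~\ref{localdec}.
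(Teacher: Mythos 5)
Your proposal is correct and follows essentially the same route as the paper: reducing intensional equivalence to irreducible identities (canonical forms, Lemma~\ref{recprogiso}, Lemma~\ref{redto44}) and then collapsing \ef{4}{4} identities to individual bare identities via Lemma~\ref{totalirreducible}(2), deciding them by membership in the finite dictionary $D(\A)$ exactly as in \eqref{inddicequiv}, is the paper's own argument. The only differences are expository: you unfold the reduction plan that the paper packages inside Lemma~\ref{redto44}, and your closing observation about where totality enters matches the paper's stated reason for treating partial structures separately.
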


\ysection{The general case} We now turn to the proof of
    Theorem~\ref{localdec} for an arbitrary infinite partial
$\Phi$-structure $\A$ which is similar in structure but requires
some additional arguments. It will be useful to keep in mind the
following example of an irreducible identity which illustrates
the changes that we will need to make to the argument:
\begin{equation}
\label{phipsiex}
\phi(\f p(\fs),\fs,\ft)=\psi(\f p(\ft),\fs,\ft).
\end{equation}

An individual variable $\fs$ is \textbf{placed} in an irreducible identity
\begin{equation}
\label{phipsieq}
\phi(z_1,\ldots,z_k) = \psi(z_{k+1}, \ldots,z_l)
\end{equation}
if $\fs\equiv z_i$ for some $i$; and an assignment $\sigma$ is
\textbf{injective} if it assigns distinct values
$\sigma(\fs)\neq\sigma(\ft)$ in $A$ to distinct, placed individual
variables $\fs\not\equiv\ft$. We write
\begin{multline}
\label{injective}
\A \models_\vinj \phi(z_1,\ldots,z_k) = \psi(z_{k+1}, \ldots,z_l)
\\ \eedf
\text{ for every injective }\sigma,~ \A,\sigma\models
\phi(z_1,\ldots,z_k) = \psi(z_{k+1}, \ldots,z_l).
\end{multline}

\begin{lemma}
\label{phipsilemma}

With every irreducible identity~\eqref{phipsieq}, we can associate a sequence
$\fx_1,\ldots,\fx_l$ of \textup{(not necessarily distinct)} individual
and nullary function variables of sort $\ind$ so that for every
\textup{(infinite)} $\Phi$-structure $\A$,
\begin{multline}
\label{phipsi}
\A\models_\vinj\phi(z_1,\ldots,z_k) = \psi(z_{k+1}, \ldots,z_l)
\\
\iff \A\models_\vinj \phi(\fx_1,\ldots,\fx_k) = \psi(\fx_{k+1}, \ldots,\fx_l).
\end{multline}
\end{lemma}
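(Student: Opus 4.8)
The plan is to exhibit the sequence $\fx_1,\ldots,\fx_l$ explicitly and then argue that, under injective assignments, the complex immediate subterms $z_i$ behave like independent nullary constants, so that the original and the translated identity impose \emph{the same} constraints on $\phi^\A$ and $\psi^\A$. First I would construct the translation coordinatewise: set $\fx_i:\equiv z_i$ when $z_i$ is an individual variable or a nullary function variable, and let $\fx_i$ be a fresh nullary function variable of sort $\ind$ when $z_i$ is an application $\f p(\f u_1,\ldots,\f u_n)$, choosing \emph{distinct} fresh variables for syntactically distinct applications. Since an individual variable is never identified with a nullary function variable, this gives $\fx_i\equiv\fx_j\iff z_i\equiv z_j$ throughout; each $\fx_i$ is immediate of sort $\ind$, so $\phi(\fx_1,\ldots,\fx_k)=\psi(\fx_{k+1},\ldots,\fx_l)$ is again an irreducible identity in form \ef{4}{4} (a ``bare'' one), and the individual variables occurring among the $\fx_i$ are exactly those occurring among the $z_i$. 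Hence the two identities have the same placed variables and the same notion of injective assignment.

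The heart of the argument is a correspondence between assignments. Given an injective $\tau$ for the translated identity I would build an injective $\sigma$ for the original with $\sigma(z_i)=\tau(\fx_i)$ for all $i$. On placed variables I simply put $\sigma(\fs)=\tau(\fs)$ (there $\fx\equiv\fs$), keeping $\sigma$ injective on placed variables. The key step is realizing the prescribed values at the complex subterms: I would first extend $\sigma$ so that it is injective on \emph{all} individual variables occurring in the identity, not merely the placed ones — possible because $A$ is infinite and only finitely many variables occur — and then set $\sigma(\f p)(\sigma(\f u_1),\ldots,\sigma(\f u_n))=\tau(\fx_i)$ for each $z_i\equiv\f p(\f u_1,\ldots,\f u_n)$ and $\sigma(\f p^{\ind,0})=\tau(\fx_i)$ for each nullary $z_i$. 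Recall that immediate terms contain no primitives from $\Phi$, only function variables, so these stipulations are unconstrained by the structure; divergent targets are met by leaving $\sigma(\f p)$ undefined at that tuple. Injectivity of $\sigma$ on \emph{all} individual variables guarantees that two distinct applications built from the same function variable receive distinct argument tuples, so the stipulations are mutually consistent, while repeated $z_i$ force the same $\fx_i$ and hence the same target; thus $\sigma(z_i)=\tau(\fx_i)$ as required.

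The reverse reading needs no injectivity of the arguments: given an injective $\sigma$ for the original, the definition $\tau(\fx_i):=\sigma(z_i)$ is automatically well defined (identical $z_i$ force identical $\fx_i$ and identical values) and injective on placed variables, since the nullary function variables among the $\fx_i$ carry no injectivity constraint and may safely take coinciding values. With the correspondence in both directions, the equivalence \eqref{phipsi} is immediate: if $\A\models_\vinj$ the original identity and $\tau$ is any injective assignment for the translated one, the matching $\sigma$ gives $\A,\sigma\models\phi(z_1,\ldots,z_k)=\psi(z_{k+1},\ldots,z_l)$, and since $\sigma(z_i)=\tau(\fx_i)$ this says exactly $\A,\tau\models\phi(\fx_1,\ldots,\fx_k)=\psi(\fx_{k+1},\ldots,\fx_l)$; the converse direction is symmetric.

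I expect the main obstacle to be precisely the consistency check for $\sigma(\f p)$ at the complex subterms — ensuring that two syntactically different immediate applications are never \emph{forced} to share an argument tuple, so that their values can be prescribed independently. This is exactly where the infinitude of $A$ is used, to spread \emph{all} individual variables, placed or not, onto distinct points; everything else is bookkeeping about immediacy, about the coincidence of placed variables between the two identities, and about the elementary fact that the values of the placed and the complex coordinates are mutually independent (the complex ones may freely equal placed values or one another, subject only to the syntactic identifications).
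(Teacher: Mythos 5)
Your proposal is correct and takes essentially the same route as the paper's proof: the same translation (copy individual and nullary function variables, replace each syntactically distinct complex immediate term by a distinct fresh nullary function variable, so that $\fx_i\equiv\fx_j\iff z_i\equiv z_j$), and the same assignment correspondence, where for the forward direction $\sigma$ is extended injectively to \emph{all} individual variables (placed or not, using infinitude of $A$) so that the stipulations $\sigma(\f p)(\sigma(\f u_1),\ldots,\sigma(\f u_n))=\tau(\fx_i)$ are mutually consistent, and for the converse one simply sets $\tau(\fx_i):=\sigma(z_i)$. The paper differs only cosmetically: it builds the $\fx_i$ by induction on $i$ via a ``new index'' bookkeeping and relegates the converse direction and the property $\fx_i\equiv\fx_j\iff z_i\equiv z_j$ to problems.
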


\begin{proof}

Call $i$ \textit{new} (in~\eqref{phipsieq}) if
there is no $j<i$ such that $z_i\equiv z_j$, and set
(by induction on $i$):
\begin{enumerate}
\item[(a1)] $\fx_i:\equiv z_i$, if $i$ is new and $z_i$
is an individual or nullary function variable;

\item[(a2)] $\fx_i :\equiv $  some fresh nullary function variable
(distinct from every $z_j$ and from every $\fx_j$
with $j<i$), if $i$ is new and $z_i\equiv \f p(\fs_1,\ldots,\fs_n)$ with
$n>0$;

\item[(a3)] $\fx_i:\equiv \fx_j$ for the least (and hence every) $j<i$ such that
$z_i\equiv z_j$, if $i$ is not new.

\end{enumerate}
Notice that directly from the definition,
\begin{equation}
\label{phipsi1}
\fx_i\equiv \fx_j \iff z_i\equiv z_j \quad(1\leq i,j\leq l),
\end{equation}
\cf Problem~\ref{phipsi1prob}.\smallskip

To prove first the direction $(\Longrightarrow)$ in~\eqref{phipsi}, assume that
\[
\A\models_\vinj\phi(z_1,\ldots,z_k) = \psi(z_{k+1}, \ldots,z_l),
\]
let $\tau$ be any injective assignment, and define $\sigma$ by setting first
\[
\sigma(z_i):=\tau(\fx_i), \text{ if $z_i$ is an individual or nullary function variable}.
\]
Since $\tau$ is injective, this already insures that, however we extend it,
$\sigma$ will be an injective assignment.

Next, if $\fs$ is an individual variable which occurs in some
$z_i\equiv \f p(\fs_1,\ldots,\fs_n)$ and is not placed, set
$\sigma(\fs)=\oo\fs$ to a fresh value in $A$, so that
\[
\fs\not\equiv \ft \implies \oo \fs\neq \oo \ft.
\]

At this point we have defined $\sigma$ on all the individual and
nullary function variables which occur in~\eqref{phipsi} and it
assigns distinct values to distinct individual variables. To
define it on $n$-ary function variables with $n>0$ which occur
in~\eqref{phipsi}, set
\[
\sigma(\f p)(\oo \fs_1,\ldots,\oo \fs_n) = \tau(\fx_i),
\text{ if }\f p(\fs_1,\ldots,\fs_n)\equiv z_i;
\]
which is a good definition, because if it happens that $\oo
\fs_1=\oo \ft_1,\ldots,\oo \fs_n=\oo \ft_n$ for some variables
$\ft_1,\ldots,\ft_n$ such that $\f p(\ft_1,\ldots,\ft_n)\equiv z_j$ for some
$j\neq i$, then $\fs_1\equiv \ft_1,\ldots,\fs_n\equiv \ft_n$, hence
$z_i\equiv z_j$ and $\tau(\fx_i)=\tau(\fx_j)$ by~\eqref{phipsi1}.\smallskip

The hypothesis now gives us that
$\A,\sigma\models\phi(z_1,\ldots,z_k) = \psi(z_{k+1},
\ldots,z_l)$, and we verify $\A,\tau\models\phi(\fx_1,\ldots,\fx_k) =
\psi(\fx_{k+1}, \ldots,\fx_l)$ by a direct computation:
\begin{multline*}
\tau(\A,\phi(\fx_1,\ldots,\fx_k)) = \phi^\A(\tau(\fx_1),\ldots,\tau(\fx_k))\\
= \phi^\A(\sigma(z_1),\ldots,\sigma(z_k))\quad(\text{by the construction})\\
= \psi^\A(\sigma(z_{k+1}),\ldots,\sigma(z_l))\quad(\text{by the hypothesis})\\
n=\psi^\A(\tau(\fx_{k+1}), \ldots, \tau(\fx_l))\quad(\text{by the construction})\\
= \tau(\A,\psi(\fx_{k+1},\ldots,\fx_l)).
\end{multline*}

The converse direction $(\Longleftarrow)$ of~\eqref{phipsi} is proved
similarly and we leave it for Problem~\ref{phipsiprob}.
\end{proof}

The lemma reduces the problem of the validity in $\A$ of any
identity in~\eqref{phipsieq} to the validity in $\A$ of a single
identity from a finite list and that would give us the decision method
we want by the same argument we used for total structures by appealing to
Lemma~\ref{totalirreducible}---except for the annoying subscript
${}_\vinj$, which we must deal with next.\smallskip

Consider the example in~\eqref{phipsiex}, for which the
construction in the Lemma verified that
\[
\A\models_\vinj \phi(\f p(\fs),\fs,\ft) = \psi(\f p(\ft),\fs,\ft) \iff
\A\models_\vinj \phi(\f q_1,\fs,\ft)=\psi(\f q_2,\fs,\ft),
\]
an equivalence which may fail without the subscript ${}_\vinj$ by Problem~\ref{problemex}. Now
\[
\A\models \phi(\f p(\fs),\fs,\ft) = \psi(\f p(\ft),\fs,\ft) \implies
\A\models \phi(\f p(\fs),\fs,\fs) = \psi(\f p(\fs),\fs,\fs);
\]
the construction in the Lemma associates the identity $\phi(\f q,\fs,\fs)=\psi(\f q,\fs,\fs)$
with $\phi(\f p(\fs),\fs,\fs)=\psi(\f p(\fs),\fs,\fs)$; and it is quite easy to check that
\begin{multline}
\label{phipsiex2}
\A\models\phi(\f p(\fs),\fs,\ft)=\psi(\f p(\ft),\fs,\ft) \\\iff
\A\models_\vinj\phi(\f q_1,\fs,\ft)=\psi(\f q_2,\fs,\ft) \conj \A\models_\vinj\phi(\f q,\fs,\fs)=\psi(\f q,\fs,\fs),
\end{multline}
\cf Problem~\ref{problemex2}. This reduces the validity of
$\phi(\f p(\fs),\fs,\ft)=\psi(\f p(\ft),\fs,\ft)$ in $\A$ to the injective validity
in $\A$ of two identities which are effectively constructed from
$\phi(\f p(\fs),\fs,\ft)=\psi(\f p(\ft),\fs,\ft)$; and the natural extension of this
reduction holds for arbitrary irreducible identities which involve
the primitives in $\Phi$:

\begin{lemma}
\label{phipsilemma2}

Let $\theta\equiv \phi(z_1.\ldots,z_k)=\psi(z_{k+1},\ldots,z_l)$
be any irreducible identity with $\phi,\psi\in\Phi$, let
$(\fs_1,\ldots,\fs_m)$ be an enumeration of the individual variables
which are placed in $\theta$, let $\EQP$ be the set of equivalence
relations on the set $\{\fs_1,\ldots,\fs_m\}$ and for any
$\sim\,\in\EQP$, let
\[
\widetilde \fs_i :\equiv \fs_j\text{ for the least $j$ such that $\fs_i\sim \fs_j$} \quad(i=1,\ldots,m).
\]
Then, for any infinite $\Phi$-structure $\A$,
\begin{equation}
\label{phipsilemma2eq}
\A\models\theta \iff \wwedge_{\sim\, \in\, \EQP} \A\models_\vinj
\theta\{\fs_i:\equiv \widetilde \fs_i\st i=1,\ldots,m\},
\end{equation}
where $\theta\{\fs_i:\equiv \widetilde \fs_i\st i=1,\ldots,m\}$ is the result of replacing
in $\theta$ each $\fs_i$ by its representative $\widetilde \fs_i$
in the equivalence relation $\sim$.
\end{lemma}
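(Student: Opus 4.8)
The plan is to read the biconditional as a case analysis on the \emph{equality pattern} an assignment imposes on the placed variables. Every assignment $\sigma$ in $\A$ induces an equivalence relation $\sim_\sigma\,\in\EQP$ on $\{\fs_1,\ldots,\fs_m\}$ by $\fs_i\sim_\sigma\fs_j\eedf\sigma(\fs_i)=\sigma(\fs_j)$, and conversely the assignments $\sigma$ with $\sim_\sigma=\sim$ correspond bijectively to the injective assignments of the substituted identity; so the implicit quantifier $(\forall\sigma)$ in $\A\models\theta$ breaks up into finitely many patterns, one per $\sim$, which is the finite conjunction on the right. Write $\theta_\sim$ for $\theta\{\fs_i:\equiv\widetilde\fs_i\st i=1,\ldots,m\}$; since the substitution only renames placed variables by representatives drawn from $\{\fs_1,\ldots,\fs_m\}$, the placed variables of $\theta_\sim$ are exactly the distinct representatives $\widetilde\fs_i$, while the function variables and the non-placed variables of $\theta$ are left untouched. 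The one tool I need is the routine substitution lemma for evaluation: if $\tau$ is any assignment and $\sigma$ is defined by $\sigma(\fs_i)=\tau(\widetilde\fs_i)$ for $i\le m$ and $\sigma=\tau$ elsewhere, then $\sigma(\A,T)=\tau(\A,T\{\fs_i:\equiv\widetilde\fs_i\})$ for each of the two sides $T$ of $\theta$, whence $\A,\sigma\models\theta\iff\A,\tau\models\theta_\sim$.

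For $(\Longrightarrow)$, assume $\A\models\theta$, fix $\sim\,\in\EQP$, and let $\tau$ be any assignment injective on the placed variables of $\theta_\sim$, i.e.\ on the distinct $\widetilde\fs_i$. Build $\sigma$ from $\tau$ as in the substitution lemma. Then $\A,\sigma\models\theta$ holds simply because $\A\models\theta$ is validity under \emph{all} assignments, and the substitution lemma turns this into $\A,\tau\models\theta_\sim$. As $\tau$ was arbitrary, $\A\models_\vinj\theta_\sim$, and as $\sim$ was arbitrary the whole conjunction holds.

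For $(\Longleftarrow)$, assume $\A\models_\vinj\theta_\sim$ for every $\sim$, and let $\sigma$ be an arbitrary assignment. Put $\sim\,:=\,\sim_\sigma$ and define $\tau$ on the representatives by $\tau(\widetilde\fs_i):=\sigma(\fs_i)$. This is well defined (if $\widetilde\fs_i\equiv\widetilde\fs_j$ then $\fs_i\sim_\sigma\fs_j$, so $\sigma(\fs_i)=\sigma(\fs_j)$) and, crucially, \emph{injective} on the distinct representatives (if $\widetilde\fs_i\not\equiv\widetilde\fs_j$ then $\fs_i\not\sim_\sigma\fs_j$, so $\sigma(\fs_i)\neq\sigma(\fs_j)$). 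Extending $\tau$ to agree with $\sigma$ on the remaining variables makes $\sigma$ the assignment produced from $\tau$ by the substitution lemma, so the injective hypothesis $\A,\tau\models\theta_{\sim_\sigma}$ delivers $\A,\sigma\models\theta$; since $\sigma$ was arbitrary, $\A\models\theta$.

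The only real difficulty is bookkeeping: one must verify that the least-representative convention defining $\widetilde\fs_i$ matches $\sim_\sigma$ closely enough that injectivity of $\tau$ corresponds \emph{exactly} to distinctness of representatives, and that the global substitution behaves correctly when a placed variable also occurs nested inside an immediate term $z_i\equiv\f p(\ldots)$ (as $\ft$ does in the running example $\phi(\f p(\fs),\fs,\ft)=\psi(\f p(\ft),\fs,\ft)$, where collapsing $\ft$ to $\fs$ silently turns $\f p(\ft)$ into $\f p(\fs)$). Both points are absorbed by the substitution lemma, which replaces \emph{all} occurrences uniformly; and the degenerate case $m=0$, where $\EQP$ is a singleton and $\models_\vinj$ collapses to $\models$, is immediate.
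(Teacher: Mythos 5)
Your proof is correct and takes essentially the same approach as the paper's: the forward direction by passing from a valid identity to its substitution instance and then weakening to injective assignments, and the converse by inducing the equivalence relation $\sim_\sigma$ from the given assignment $\sigma$ and observing that $\sigma$ is automatically injective on the distinct representatives. Your auxiliary assignment $\tau$ in fact coincides with $\sigma$ on every relevant variable, so your ``substitution lemma'' is just the paper's pointwise check that $\sigma(z)=\sigma(\widetilde z)$ for each immediate term $z$, packaged as a separate statement.
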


\begin{proof}

The direction $(\Longrightarrow)$ of~\eqref{phipsilemma2eq} is
trivial: because
\begin{multline*}
\A\models\theta \implies (\forall\sim\,\in\EQP)[\A\models\theta\{\fs_i:\equiv \widetilde \fs_i\st
i=1,\ldots,m\}]\\
\implies (\forall\sim\,\in\EQP)[\A\models_\vinj\theta\{\fs_i:\equiv \widetilde \fs_i\st i=1,\ldots,m\}].
\end{multline*}

For the converse, assume the right-hand-side
of~\eqref{phipsilemma2eq}, fix an assignment $\sigma$ and set
\[
\fs_i\sim \fs_j \iff \sigma(\fs_i)=\sigma(\fs_j) \qquad\text{(notice that $\sim$ depends on $\sigma$)};
\]
$\sigma$ is injective for the identity $\theta\{\fs_i:\equiv
\widetilde \fs_i\st i=1,\ldots,m\}$ whose placed individual
variables are exactly the representatives $\{\widetilde \fs_i \st
i=1,\ldots,m\}$ of the placed individual variables of $\theta$, and so the
hypothesis gives
\[
\A,\sigma\models\theta\{\fs_i:\equiv \widetilde \fs_i\st i=1,\ldots,m\}.
\]
To infer that $\A,\sigma\models\theta$, check
first that for every immediate term $z$,
\[
\sigma(z)= \sigma(z\{\fs_i:\equiv\widetilde \fs_i \st i=1,\ldots,m\})=\sigma(\widetilde z);
\]
this holds by the definition of $\sim$ if $z\equiv \fs_i$ for some
$i$ and then trivially in every other case. Finally, compute:
\begin{multline*}
\A,\sigma\models\theta\{\fs_i:\equiv \widetilde \fs_i\st i=1,\ldots,m\}\\
\implies \phi^\A(\sigma(\widetilde z_1),\ldots,\sigma(\widetilde z_k))
=\psi^\A(\sigma(\widetilde z_{k+1}),\ldots,\sigma(\widetilde z_l))\\
\implies\phi^\A(\sigma(z_1),\ldots,\sigma(z_k))=\psi^\A(\sigma(z_{k+1}),\ldots,\sigma(z_l))\\
\hspace*{2cm}\implies\sigma(\A,\phi(z_1,\ldots,z_k))=\sigma(\A,\psi(z_{k+1},\ldots,z_l))
\implies\A,\sigma\models\theta.\hspace*{.6cm}\qedsymbol
\end{multline*}
\noqed
\end{proof}
\vskip -10pt

A \textbf{bare} $\Phi$-\textbf{identity} is an identity of the form
\begin{equation}
\label{bare}
\phi(\fx_1,\ldots,\fx_k) = \psi(\fx_{k+1},\ldots,\fx_l),
\end{equation}
where each $\fx_i$ is an individual or nullary function variable of
sort $\ind$ chosen from the first $2l$ entries in a fixed list
\[
\f
v_0, \f p^{\ind,0}_0, \f v_1, \f p^{\ind,0}_1, \ldots
\]
of all such variables; and the \textbf{dictionary} of $\A$ is the set
\begin{equation}
\tag{Dictionary}
D(\A) = \{\theta\st \theta \text{ is a bare identity and }\A\models_\vinj\theta\}.
\end{equation}

The bare $\Phi$-identities are alphabetic variants of the
identities constructed in Lemma~\ref{phipsilemma} and there are
only finitely many of them, \cf Problem~\ref{barearity}.

\ynm{We understand~\eqref{bare} to include (by convention) identities of the form
\[
\phi(x_1,\ldots,x_k) = \psi, \quad \phi=\psi
\]
when the arity of one or both of $\phi,\psi$ is $0$. We will not
deal separately with these degenerate cases, since the arguments
we will give can be trivially adjusted to apply to them also.}

\begin{corollary}
\label{phipsicor}

With every irreducible identity of the form
\[
\theta \equiv \phi(z_1,\ldots,z_k)=\psi(z_{k+1},\ldots,z_l),
\]
we can associate a finite sequence $\theta_1,\ldots,\theta_n$ of bare
identities so that for every $\Phi$-structure $\A$,
\begin{equation}
\label{barerep}
\A\models\theta \iff \A\models_\vinj \theta_1 \conj\cdots\conj\A\models_\vinj\theta_n
\iff \theta_1, \ldots, \theta_n\in D(\A).
\end{equation}
\end{corollary}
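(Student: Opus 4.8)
The plan is to derive the Corollary by composing Lemmas~\ref{phipsilemma2} and~\ref{phipsilemma}, and then reading off a finite list of bare identities from their outputs; the substantive work is already contained in those two lemmas, so this amounts to a bookkeeping argument.

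First I would apply Lemma~\ref{phipsilemma2} to $\theta$. Letting $\fs_1,\ldots,\fs_m$ enumerate the individual variables placed in $\theta$ and $\EQP$ the (finite) set of equivalence relations on $\{\fs_1,\ldots,\fs_m\}$, this gives
\[
\A\models\theta \iff \wwedge_{\sim\,\in\,\EQP}\A\models_\vinj \theta_\sim,
\qquad \theta_\sim :\equiv \theta\{\fs_i:\equiv\widetilde\fs_i\st i=1,\ldots,m\}.
\]
Since $\theta_\sim$ arises from $\theta$ by replacing placed individual variables with (representative) individual variables, its two sides are still $\phi,\psi\in\Phi$ applied to immediate terms; so by~\eqref{alphchange} each $\theta_\sim$ is again an irreducible identity of the form~\eqref{phipsieq}, and Lemma~\ref{phipsilemma} applies to it.

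Next I would apply Lemma~\ref{phipsilemma} to each $\theta_\sim$, obtaining a sequence $\fx_1,\ldots,\fx_l$ of individual and nullary function variables of sort $\ind$ (depending on $\sim$) with
\[
\A\models_\vinj\theta_\sim \iff \A\models_\vinj
\phi(\fx_1,\ldots,\fx_k)=\psi(\fx_{k+1},\ldots,\fx_l).
\]
The right-hand identity already has the shape of a bare identity, except that its variables need not come from the fixed interleaved list $\f v_0,\f p^{\ind,0}_0,\f v_1,\f p^{\ind,0}_1,\ldots$. At most $l$ variables of each kind occur, so a sort- and kind-preserving injective renaming carries it to a genuine bare identity $\theta_\sim^\ast$ drawn from the first $2l$ entries of that list. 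Because $\models_\vinj$ quantifies over all (injective) assignments, and injectivity on placed individual variables is preserved under a bijective renaming, this step is harmless: $\A\models_\vinj\theta_\sim \iff \A\models_\vinj\theta_\sim^\ast$.

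Finally I would take $\theta_1,\ldots,\theta_n$ (with $n=|\EQP|$) to be the bare identities $\theta_\sim^\ast$, one for each $\sim\in\EQP$. Chaining the three displayed equivalences yields the first biconditional
\[
\A\models\theta \iff \A\models_\vinj\theta_1\conj\cdots\conj\A\models_\vinj\theta_n,
\]
and the second biconditional is immediate from the definition of the dictionary, since $\A\models_\vinj\theta_i \iff \theta_i\in D(\A)$. The one point to watch is the passage to bare form: one must keep individual variables and nullary function variables distinct---only the former are constrained by injectivity---and check that renaming into the fixed list does not disturb $\models_\vinj$. As this is the only mildly delicate step, I would expect it to be the main (and rather minor) obstacle.
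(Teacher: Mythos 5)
Your proposal is correct and follows essentially the same route as the paper: the paper's proof likewise enumerates the equivalence relations on the placed variables, applies the construction of Lemma~\ref{phipsilemma} to each substituted identity $\theta\{\fs_i:\equiv\widetilde\fs_i\st i=1,\ldots,m\}$ from Lemma~\ref{phipsilemma2}, and then takes the ``alphabetically least'' variant using only the allowed variables. Your treatment of the final renaming step (checking that $\models_\vinj$ is preserved) is the only part spelled out in more detail than the paper's terse argument, and it is handled correctly.
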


\begin{proof}

Let $\sim_1,\ldots,\sim_n$ be an enumeration of the equivalence
relations on the set $\{\fs_1,\ldots,\fs_m\}$ of the placed individual
variables, let, for each $i$, $\theta'_i$ be the identity
associated with $\theta\{\fs_i:\equiv \widetilde \fs_i\st
i=1,\ldots,m\}$ when $\sim\,=\,\sim_i$ by the construction in
Lemma~\ref{phipsilemma}, and let $\theta_i$ be the ``alphabetically least'' variant
of $\theta'_i$ in which only the allowed variables occur.
\end{proof}

For
example, to be ridiculously formal, the bare identities associated
with Example~\eqref{phipsiex} ny~\eqref{phipsiex2} are
\[
\phi(\f p^{\ind,0}_0,\f v_0, \f v_1) = \psi(\f p^{\ind,0}_1,\f v_0,\f v_1) \text{ and }
\phi(\f p^{\ind,0}_0,\f v_0,\f v_0)=\psi(\f p^{\ind,0}_0,\f v_0,\f v_0).
\]

\begin{proofplain}[Proof of Theorem~\ref{localdec}]
is now immediate by appealing to Lemma~\ref{redto44}.
\end{proofplain}

\ynm{Following the plan for the proof on page~\pageref{proofplan}, we
reduce the problem of intensional equivalence between two extended
$\Phi$-programs $E(\vec{\f x})$ and $F(\vec{\f x})$ to deciding
the validity in $\A$ of irreducible term identities, and these fall
in one of ten cases i-j with $1\leq i\leq j\leq 4$
on the same page depending of the form of their two sides,
for example
\begin{equation}
\tag*{Case (3-4)} \tif w_1~\tthen w_2~\telse w_3 = \phi(z_1,\ldots,z_m).
\end{equation}

Case (1-1) is trivial, as no identity holds
between distinct terms which are immediate, or $\true$ or
$\false$, and
Lemma~\ref{2-2} and the first three parts of Lemma~\ref{irrthm}
decide the validity of the identity if $1\leq i\leq j\leq
3$ or $i=3, j=4$---for example, the identity in case (3-4) never holds by
Part (3) of Lemma~\ref{irrthm}.

Cases (1-4) and (2-4), $w = \phi(z_1,\ldots,z_k)$ where $w$ is $\true$,
$\false$ or an irreducible (perhaps immediate) term of the form
$p(w_1,\ldots,w_m)$ or $p$. Now part (4) of Lemma~\ref{irrthm}
reduces this identity to
\[
\phi(z_1,\ldots,z_k)=z_j
\]
for some $j$ (that we can compute), and the same holds in the
simpler case when $w$ is an individual or nullary function
variable, \cf Problem~\ref{14}. We assume that the identity  map
$\id(x)=x$ on $A$ is among the primitives---or we just add it,
which will just make the decidability result stronger---and then
this identity takes the form
\[
\phi(z_1,\ldots,z_k)=\id(z_i)
\]
and is covered by case (4-4) below. The same reduction
to (4-4) works for $\true$ and $\false$, we just assume that
these boolean constants have names in $\Phi$.\smallskip

Case (4-4) requires us to decide whether an irreducible
identity of the form
\[
\phi(z_1,\ldots,z_k)=\psi(z_{k+1},\ldots,z_l)
\]
holds in $\A$ and it is the only case which depends on the
primitives of $\A$. For this we will use the \textit{dictionary of
$\A$}, the set
\begin{equation}
\tag{Dictionary}
\label{dictionary}
D(\A) = \{\theta \st \theta \text{ is a bare identity and }\A\models_\vinj\theta\}.
\end{equation}
This is a finite set by Problem~\ref{barearity}, and by Corollary~\ref{phipsicor},
we can construct from $\phi(z_1,\ldots,z_k)$ and $\psi(z_{k+1},\ldots,z_l)$
bare identities $\theta_1,\ldots,\theta_n$ such that
\begin{multline*}
\A\models \phi(z_1,\ldots,z_k)=\psi(z_{k+1},\ldots,z_l)
\\
\hspace*{1cm}\iff \A\models_\vinj \theta_1\conj\dots\conj\A\models_\vinj\theta_n
\iff\theta_1,\ldots,\theta_n\in D(\A).\hspace*{.65cm} \qedsymbol
\end{multline*}
\noqed
\end{proof}}

\ysection{Global intensional equivalence}
We might guess that irreducible programs are globally
intensionally equivalent only if they are congruent, but this is
spoiled by the trivial,
\begin{equation}
\label{trivialpidentity}
\models E = \tif E~\tthen E~\telse E \quad(\text{with
$E$ explicit of boolean sort})
\end{equation}
already noticed in Problem~\ref{congintequalprob2}. So we need to
adjust:\smallskip

An irreducible program $E$ is \textbf{proper} if none of its parts
is an immediate term of the form $\f p$ or $\f p(\f u_1,\ldots,\f
u_n)$ and boolean sort.

\begin{theorem}
\label{globaldec}

\tu{1} Every extended irreducible $\Phi$-program is globally
intensionally equivalent to a proper one.

\tu{2} Two extended, proper, irreducible $\Phi$-programs are
globally intensionally equivalent if and only if they are
congruent.

\tu{3} The relation of global intensional equivalence
between extended $\Phi$-programs is decidable.

\end{theorem}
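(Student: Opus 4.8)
The plan is to handle the three parts in order, reducing each to the characterization of recursor equality in Lemma~\ref{recprogiso} together with the case analysis of irreducible identities in Lemmas~\ref{2-2}--\ref{philemma1}. \textbf{Part (1).} The sole obstruction to properness is a part which is a boolean immediate term $\f p$ or $\f p(\f u_1,\ldots,\f u_n)$. I would remove each such part $E_i$ (the head $E_0$ included) by replacing it with $\tif E_i~\tthen E_i~\telse E_i$. Since $E_i$ is immediate, this term is a conditional of immediate terms, hence irreducible and of form \ef{3}{3}, so it is not boolean immediate; and the trivial validity~\eqref{trivialpidentity} gives $\models E_i = \tif E_i~\tthen E_i~\telse E_i$. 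Keeping the same recursion variable and parameters and taking the identity permutation in Lemma~\ref{recprogiso}, the modified program defines the same recursor as $E(\vec{\f x})$ on every structure, hence is globally intensionally equivalent to it; iterating over all boolean immediate parts produces the desired proper program, with no new parts introduced.

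\textbf{Part (2).} The direction $(\Leftarrow)$ is immediate from~\eqref{congintequal}, since congruent extended programs define equal recursors in every structure and, for an irreducible program, the intension \emph{is} its recursor by~\eqref{refintdef}. For $(\Rightarrow)$ I would fix one sufficiently generic infinite structure $\A^\ast$: each $\phi\in\Phi$ of sort $\ind$ is interpreted by an injection whose range is disjoint from the ranges of the other primitives and from the values of variables, while the boolean primitives are chosen so that no two distinct boolean primitive-applications agree on all arguments; concretely, $\A^\ast$ is arranged so that its dictionary contains only the reflexive bare identities. Assuming $E(\vec{\f x})\inteq{}F(\vec{\f x})$, we have in particular $\refint(\A^\ast,E(\vec{\f x}))=\refint(\A^\ast,F(\vec{\f x}))$, so Lemma~\ref{recprogiso} supplies a common number $K+1$ of parts and a permutation $\pi$ with $\pi(0)=0$ for which the irreducible identities~\eqref{recprogisochar} all hold in $\A^\ast$.

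I would then run through the forms \ef{1}{1}--\ef{4}{4} of irreducible terms to show that in $\A^\ast$ each of these identities between two parts---both non-boolean-immediate by properness---forces the two sides to be syntactically identical after the renaming $\vec{\f q}:\equiv\rho(\vec{\f p})$. Form \ef{2}{2} is Lemma~\ref{2-2}, form \ef{3}{3} is Lemma~\ref{3-3}, and \ef{3}{4} cannot occur by Lemma~\ref{3-4}; the mixed forms \ef{2}{3} (Lemma~\ref{2-3}) and \ef{2}{4} (Lemma~\ref{philemma1}) can hold only when one side is boolean immediate or a primitive acts as a projection, both excluded by properness and by the independence of $\A^\ast$; the cases with a form-\ef{1}{1} side are structure-independent except for $\true,\false$ against a primitive application, which fails in $\A^\ast$ because the boolean primitives are non-degenerate; and the genuinely structural case \ef{4}{4} reduces, by Corollary~\ref{phipsicor}, to membership of finitely many bare identities in the dictionary of $\A^\ast$, which by minimality forces $\phi\equiv\psi$ and matching arguments. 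Since every part then agrees syntactically after the renaming and $\pi$ fixes the head, this is precisely $E\equiv_c F$.

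\textbf{Part (3).} Given $E(\vec{\f x})$ and $F(\vec{\f x})$, I would compute canonical forms by Theorem~\ref{cfthm}, make them proper as in Part (1)---both steps preserving global intensional equivalence by~\eqref{refintdef}---and then, by Part (2), decide $E\inteq{}F$ by deciding congruence of the two proper irreducible forms, which is a finite search over permutations of the body equations fixing the head together with the induced variable renamings. The main obstacle is the construction of the generic structure $\A^\ast$ used in Part (2): one must exhibit a single infinite structure realizing the minimal dictionary, and the boolean primitives---whose range is forced to be $\{\true,\false\}$---need care to avoid spurious agreements, so that no two distinct boolean irreducible terms are identified. Verifying that properness eliminates exactly the exceptional identity~\eqref{trivialpidentity} uniformly across all ten form-combinations is then routine but fussy bookkeeping.
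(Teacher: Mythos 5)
Your overall architecture---pass to canonical forms, make them proper, and prove that for proper irreducible programs global intensional equivalence coincides with congruence via Lemma~\ref{recprogiso} plus a case analysis of irreducible identities on a ``generic'' structure---is the paper's, and your Part (1), your Part (3), and the $(\Leftarrow)$ direction of Part (2) match the paper's argument. But the $(\Rightarrow)$ direction of Part (2) has a genuine gap: the single generic structure $\A^\ast$ you postulate does not exist when $\Phi$ contains nullary primitives of sort $\bool$ (boolean constants). A nullary boolean primitive $\gamma$ can only denote $\true$, denote $\false$, or diverge; so among the irreducible terms $\true$, $\false$, $\gamma_1,\ldots,\gamma_r$ (where the $\gamma_i$ are the boolean constants of $\Phi$), as soon as $r\geq 2$ some two \emph{distinct} ones must be identified in \emph{any} single structure: if some $\gamma_i^{\A^\ast}$ converges it is identified with the literal $\true$ or $\false$, and if all diverge then any two distinct constants are identified with each other. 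Either way $\A^\ast$ validates an identity between two distinct proper irreducible parts, so Lemma~\ref{recprogiso} may pair a part $E_i\equiv\gamma$ of $E$ with a part $F_{\pi(i)}\equiv\delta$ (or $\equiv\true$) of $F$, and congruence does not follow. This is exactly the difficulty you flag at the end as ``care to avoid spurious agreements,'' but it is not fussy bookkeeping: it is a pigeonhole obstruction that no choice of a single $\A^\ast$ can remove (your single-structure argument is sound only when $\Phi$ has at most one boolean constant, interpreted as divergent).

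The paper circumvents this by using the full strength of \emph{global} equivalence, playing several structures against each other. It first builds a structure $\A$ much like your $\A^\ast$ (sort-$\ind$ primitives injective with pairwise disjoint, co-infinite ranges; positive-arity boolean primitives separated on fresh tuples, one for each equivalence pattern of arguments) but with \emph{every} boolean constant divergent; this yields its Lemma 1: corresponding parts are syntactically identical \emph{unless both are \tu{possibly distinct} boolean constants}. Then, for each boolean constant $\gamma$, it uses a second structure $\B^\gamma$ with $\gamma^{\B}=\true$ and all other boolean constants divergent, and a counting argument (its Lemma 2) shows that $E$ and $F$ have the same number of parts equal to $\gamma$; a $\false$-variant of the same construction pins down the head part (its Lemma 3). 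The congruence permutation is then assembled piecewise: the identity on the non-constant parts, and arbitrary bijections between the equinumerous blocks of parts equal to each $\gamma$. To repair your proof you would need to import this multi-structure counting step; the rest of your argument can stand as written.
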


\begin{proof}

(1) Replace every part $E_i\equiv\underline{\f
p^{\bool,n}(\fx_1,\ldots,\fx_n)}$ of the program which is
immediate and of Boolean sort by $\underline{\tif E_i~\tthen
E_i~\telse E_i}$.\smallskip

(2) We need to show that if $E(\vec{\f x})$ and $F(\vec{\f x})$
are proper, extended irreducible $\Phi$-programs, then
\[
\Big(E(\vec{\f x})\inteq\A F(\vec{\f x}) \text{ for every infinite $\Phi$-structure $\A$}\Big)
\implies E\equiv_c F;
\]
and the idea is to use the hypothesis on a single, suitably
``free'' total structure $\A$ and then apply Lemma~\ref{totalirreducible}.\smallskip

We fix an infinite (countable) set $A$.\smallskip

\textit{Step 1}.  For each $\phi\in\Phi$ of sort $\ind$ and arity
$n\geq0$ (if there are any such), fix a set $R_\phi\subset
A$ and an injection $\phi^\A:A^n\inj R_\phi$, so that
\[
\phi\not\equiv\psi\implies R_\phi\cap R_\psi=\emptyset
\text{ and $\bigcup\{R_\phi \st \phi\in\Phi\}$ is co-infinite}.
\]
If $\arity(\phi)=0$, then $R_{\phi} = \{\oo\phi\}$ is a
singleton.\smallskip

It is easy---if a bit tedious---to check that for all immediate
terms $z_1,\ldots,z_k$, $w_1,\ldots,w_n$ and however we complete
the definition of $\A$, if $\phi$ and $\psi$ are of sort $\ind$,
then
\begin{multline}
\label{globaldec1}
\A\models\phi(z_1,\ldots,z_k) = \psi(w_1,\ldots,w_n)
\implies \phi(z_1,\ldots,z_k)\equiv\psi(w_1,\ldots,w_n),
\end{multline}
\cf Problem~\ref{step1}.\smallskip

\textit{Step 2}. For any two $\phi,\psi\in\Phi$ of sort $\bool$ and
arities $k>0, n>0$ and for every equivalence relation $\sim$ on the
set $\{1,\ldots,k,k+1,\ldots,k+n\}$, we choose a fresh tuple
$a_1,\ldots,a_k,a_{k+1},\ldots,a_{k+n}$ of elements in $A$ such
that
\[
a_i=a_j \iff i\sim j
\]
and set $\phi^\A(a_1,\ldots,a_k)=\true,\quad
\psi^\A(a_{k+1},\ldots,a_{k+n})=\false$. We do this by enumerating
all triples $(\phi,\psi,\sim)$ where $\phi,\psi$ are of boolean
sort and arities $k>0,n>0$ and $\sim$ is an equivalence relation
on $\{1,\ldots,k+n\}$, and then successively fixing a fresh tuple
$a_1,\ldots,a_{k+n}$ for each triple with the required
property---which we can do since, at any stage, we have used up
only finitely many members of $A$. This insures~\eqref{globaldec1}
for $\phi,\psi$ of sort $\bool$ and non-zero arities no
matter how we complete the definitions of $\phi^\A$.\smallskip

\textit{Step 3}. If $\gamma\in\Phi$  is of sort $\bool$ and arity $0$
(a boolean constant), we set $\gamma^\A\diverges$.\smallskip

At this point we have completed the definition of a structure $\A$
and the hypothesis gives us that
\[
\A\models E_i=F_i \quad (i=0, \ldots K).
\]

{\it Lemma 1. For $i=0,\ldots,K$, $E_i\equiv F_i$, unless both $E_i$
and $F_i$ are distinct boolean constants} in $\Phi$.

\begin{proofplain}[\it Proof]
\renewcommand\qedsymbol{\oldqedsymbol~(Lemma 1)}
is easy (if tedious) by appealing to Lemmas~\ref{purealgebraic} and \ref{2-3}
and we leave it for Problem~\ref{lemma1}.
\end{proofplain}

{\it Lemma 2. For each boolean constant $\gamma\in\Phi$},
\[
\Big|\{i\leq K \st E_i\equiv\gamma\}\Big| =
\Big|\{j\leq K \st F_j\equiv\gamma\}\Big|.
\]

\begin{proof}[\it Proof]
\renewcommand\qedsymbol{\oldqedsymbol~(Lemma 2)}

Fix $\gamma$ and define the structure $\B=\B^\gamma$ exactly as we defined $\A$ in Steps 1
and 2 above but replacing Step 3 by the following:\smallskip

\textit{Step $3^\gamma$}. Set $\gamma^\B=\true$ and for every other
boolean constant $\delta\in\Phi$, set $\delta^\B\diverges$.\smallskip

The hypothesis gives us a permutation $\pi:\{0,\ldots,K\}
\to\{0,\ldots, K\}$ such that $\pi(0)=0$ and
\[
\B\models E_i=F_{\pi(i)},\quad i\leq K.
\]
By a slight modification of the detailed case analysis in the
proof of Lemma~1 (in Cases \ef{2}{5},  \ef{3}{5} and \ef{4}{5}),
\begin{multline*}
\text{if $E_i$ is not a propositional constant in $\Phi$, or $\true$ or
$\false$,}\\
\text{then $F_{\pi(i)}$ is not a propositional constant in $\Phi$,
or $\true$ or $\false$,}
\end{multline*}
and directly from the definition of $\B$,
\begin{multline*}
\text{if $E_i$ is a propositional constant in $\Phi$ other than $\gamma$,}
\\
\text{then $F_{\pi(i)}$ is also a propositional constant in
$\Phi$ other than $\gamma$.}
\end{multline*}
So $\pi$ pairs the parts of $E$ which are $\true$ or $\gamma$ with
the parts of $F$ which are $\true$ or $\gamma$, hence
\[
\Big|\{i\leq K \st E_i\equiv\gamma \text{ or }E_i\equiv\true\}\Big| =
\Big|\{j\leq K \st F_j\equiv\gamma \text{ or }F_j\equiv\true\}\Big|;
\]
but $\Big|\{i\leq K \st E_i\equiv\true\}\Big| =
\Big|\{j\leq K \st F_j\equiv\true\}\Big|$ by Lemma~1, and so the
last displayed identity implies the claim in the Lemma.
\end{proof}

{\it Lemma 3}. $E_0\equiv F_0$.

\begin{proof}[\it Proof]
\renewcommand\qedsymbol{\oldqedsymbol~(Lemma 3)}

This is  true by Lemma~1, if $E_0$ is not a boolean constant in $\Phi$.

If $E_0\equiv\gamma$, then the construction in Lemma~2 gives
$F_0\equiv\true\vee F_0\equiv\gamma$; and the same construction
starting with $\gamma^\B=\false$ gives $F_0\equiv\false\vee
F_0\equiv\gamma$, so that
\[
(F_0\equiv\true\vee F_0\equiv\gamma)\conj
(F_0\equiv\false\vee F_0\equiv\gamma),
\]
which implies that $F_0\equiv \gamma$.
\end{proof}

To complete the proof that $E\equiv_cF$, we need to construct a
permutation $\rho:\{0,\ldots,K\}\bij\{0,\ldots,K\}$ such that
$\rho(0)=0$ and
\begin{equation}
\label{lastbit}
E_i\equiv F_{\rho(i)} \quad (i=1,\ldots,K).
\end{equation}

We start by setting  $\rho(i)=i$ if $E_i$ is not a boolean
constant in $\Phi$ or $i=0$, which insures~\eqref{lastbit} for
such $i$ by Lemmas~1 and~3. Next, for each boolean constant
$\gamma\in\Phi$, we appeal to Lemma~2 to extend $\rho$ in any way
so that
\[
\rho : \{i\leq K \st E_i\equiv\gamma\}\bij \{j\leq K\st F_j\equiv\gamma\}
\]
and establishes that $E\equiv_c F$.\smallskip

Part (2) follows almost immediately from (1) and we leave it for
Problem~\ref{cordecprob}.
\end{proof}

Together with the reduction calculus in Section~\ref{cforms},
Theorem~\ref{globaldec} suggests an obvious way to axiomatize the
relation of \textit{global intensional equivalence} between
programs, but we will not go into it here.

\problems

\begin{prob}
\label{1-123}
Give a decision procedure for $\A\models E=F$ in forms
\ef{1}{1}, \ef{1}{2}, \ef{1}{3} and arbitrary $\A$.
\end{prob}

\ynm{\begin{prob}[[Lemma~\ref{irrthm}, (3)]
\label{prob4d3}

Prove that if $z_1,\ldots,z_m$ and $w_1$, $w_2$, $w_3$ are immediate,
$\phi\in\Phi$ and $\A$ is any infinite $\Phi$-structure, then
\begin{equation}
\tag{$\ast$}
\label{prob4d3eq}
\A\not\models \phi(z_1,\ldots,z_m)=\tif w_1~\tthen w_2~\telse w_3.
\end{equation}

\sol{By Lemma~\ref{convdiv}, we can define some $\sigma$ on the
(individual) variables which occur in $\phi(z_1,\ldots,z_m)$ so
that it assigns convergent values $\oo z_1,\ldots,\oo z_m$ in $A$
to $z_1,\ldots,z_m$ and also $\oo w_2, \oo w_3\in A$ to $w_2,
w_3$, if they are of sort $\ind$---but it does not yet assign a
value to $w_1$, which is of boolean sort.

If $\phi^\A(\oo z_1,\ldots,\oo z_m)\converges$, extend $\sigma$
(by Lemma~\ref{convdiv} again) so that $\sigma(w_1)\diverges$; and
if $\phi^\A(\oo z_1,\ldots,\oo z_m)\diverges$, extend $\sigma$ so
that $\sigma(w_1)=\true$; and if $\sigma(w_2)$ has not been
defined yet, extend $\sigma$ again so that
$\sigma(w_2)\converges$. In any of these cases,
$\A,\sigma\not\models
\phi(z_1,\ldots,z_m)=\tif w_1~\tthen w_2~\telse w_3$.
}
\end{prob}}

\begin{prob}
\label{immcond1}

True or false: for any infinite $A$ and any immediate terms
$z_1,\ldots,z_m, w_1,w_2,w_3$ with $\sort(w_2)=\sort(w_3)=s\in\{\ind,\bool\}$,
we can define a partial function $\phi:A^m\pfto A_s$ such that
\[
(A,\phi)\models \phi(z_1,\ldots,z_m)=\tif~w_1~\tthen w_2~\telse w_3.
\]

\sol{False, this is just a restatement of (1) in Lemma~\ref{irrthm},
if you think about it.}
\end{prob}

\begin{prob}
\label{phipsi1prob}

Prove~\eqref{phipsi1}, that with the definitions in the proof of
Lemma~\ref{phipsilemma},
\begin{equation}
\tag{$\ast$}
\label{phipsi1probeq}
\fx_i\equiv \fx_j \iff z_i\equiv z_j \quad(1\leq i,j\leq l).
\end{equation}

\hint Use induction on $i$.

\sol{Use induction on $i+j$, considering all six possibilities for
the combination (i-j).

(1) If $i$ is new and $z_i$ is an individual or nullary function
variable, then $x_i:\equiv z_i$ and we consider subcases on what
$z_j$ is:

\quad If $j$ is new and $z_j$ is also a variable, then
$x_j:\equiv z_j$ and so $x_i \equiv x_j \iff z_i\equiv z_j$.

\quad If $j$ is new and $z_j\equiv p(s_1,\ldots,s_n)$, then
$z_j\not\equiv z_i$ and $x_j$ is a fresh nullary function variable
different from $x_i$, and so $x_i\not\equiv x_j$.

\quad If $j$ is not new, then $x_j:\equiv x_k$ for some $k<j$ such
that $z_j\equiv z_k$ and (using the induction hypothesis this
time),
\[
x_i\equiv x_j \iff x_i\equiv x_k \iff z_i\equiv z_k \iff z_i\equiv z_j.
\]

(2) If $i$ is new and $z_i\equiv q(t_1,\ldots,t_m)$, then $x_i$ is
a fresh nullary function variable different from all the $z_k$'s,
and we take cases (not considered under (1)) on what $z_j$ may be:

\quad If $j$ is new and $z_j\equiv p(s_1,\ldots,s_n)$, then $x_j$
is a fresh nullary function variable. The required equivalence is,
of course trivial if $i=j$; and if $i\neq j$ and both $z_i,z_j$
are new, then we cannot have $z_i\equiv z_j$---and then we also
have $x_i\not\equiv x_j$.

\quad If $j$ is not new, then the corresponding argument in case
(1) can be used to get the required result.

(3) If $i$ is not new and $j$ is not new either, then $x_i\equiv
x_l$ and $x_j\equiv x_k$ for some $l<i, k<j$ such that $z_i\equiv
z_l, z_j\equiv z_k$ and we can use the induction hypothesis,
\[
x_i\equiv x_j \iff x_l\equiv x_k \iff z_l\equiv z_k\iff z_i\equiv z_j.
\]
}
\end{prob}

\begin{prob}
\label{case11}

Prove that if $E$ and $F$ are distinct immediate terms or $\true$
or $\false$, then $\A\not\models E=F$, for any infinite $\A$.
\end{prob}

\begin{prob}

Suppose $\A$ is a total $\Phi$-structure and $\phi,\psi\in\Phi$.
Find a sequence of (not necessarily distinct) individual variables
$\fx_1,\ldots,\fx_6$ such that
\[
\A\models \phi(\f p(\fs),\fs,\ft) = \psi(\f p(\ft),\fs,\ft) \iff \A\models\phi(\fx_1,\fx_2,\fx_3)
=\psi(\fx_4,\fx_5,\fx_6).
\]
\sol{The method of proof of~\ref{totalirreducible} gives
\[
\A\models \phi(p(s),s,t) = \psi(p(t),s,t) \iff \A\models\phi(u,x,y)=\psi(v,x,y),
\]
which is easy to verify.}
\end{prob}

\begin{prob}
\label{problemex}
Give an example where
\[
\A\models\phi(\f p(\fs),\fs,\ft) = \psi(\f p(\ft),\fs,\ft) \text{ but }
\A\not\models\phi(\f q_1,\fs,\ft) = \psi(\f q_2,\fs,\ft).
\]
\hint Set $\phi^\A(x,s,t) = \psi^\A(y,s,t) = f(s,t)$, with a partial function $f(s,t)$
which converges only when $s=t$.
\sol{Set $\phi(\f r,s,t) = \psi(\f r,s,t) = \tif s=t~\tthen
\true~\telse\diverges$. Both sides of $\phi(\f p(s),s,t) =
\psi(\f p(t),s,t)$ diverge if $s\neq t$. If $s=t$, the identity
becomes $\phi(\f p(s),s,s) = \psi(\f p(s),s,s)$, and either both sides
diverge when $\f p(s)\diverges$ or both are $=\true$ when
$\f p(s)\converges$. The new identity becomes
$\phi\f (q_1,s,s)=\psi(\f q_2,s,s)$ and fails to hold when
$\f q_1\converges$ but $\f q_2\diverges$. }
\end{prob}

\begin{prob}
\label{phipsiprob}
Prove the direction ($\Longleftarrow$) of~\eqref{phipsi}, \ie
\begin{multline*}
\tag{$\ast$}
\label{phipsiprobeq}
\A\models_\vinj \phi(\fx_1,\ldots,\fx_k) = \psi(\fx_{k+1}, \ldots,\fx_l)\\
\implies \A\models_\vinj\phi(z_1,\ldots,z_k) = \psi(z_{k+1}, \ldots,z_l)
\end{multline*}
with the choice of variables $\fx_1,\ldots,\fx_l$ made in the proof of
Lemma~\ref{phipsilemma}.

\sol{Assume the hypothesis of~\eqref{phipsiprobeq},
suppose $\sigma$ is any injective (for the given equation)
assignment, set
\[
\tau(x_i) = \sigma(z_i) \quad (i=1,\ldots,l),
\]
and note directly from the definition that $\tau$ is an injective
assignment. Now compute,
\begin{multline*}
\sigma(\A,\phi(z_1,\ldots,z_k)) = \phi^\A(\sigma(z_1),\ldots,\sigma(z_k))
\\
=\phi^\A(\tau(x_1),\ldots,\tau(x_k))\quad(\text{by def})
\\
=\psi^\A(\tau(x_{k+1}),\ldots,\tau(x_l))\quad (\text{by hyp})
\\
=\psi^\A(\sigma(z_1),\ldots,\sigma(z_k))=\sigma(\A,\psi(z_{k+1},\ldots,z_l))
\end{multline*}
which is what we needed to show.
}
\end{prob}

\begin{prob}
\label{problemex2}
Prove~\eqref{phipsiex2}.
\sol{Take cases on whether $s\equiv t$ of $s\not\equiv t$.}
\end{prob}

\begin{prob}
\label{barearity}
Prove that the number of bare $\Phi$-identities as in \eqref{bare}
is bounded above by $2^{2\arity(\Phi)}|\Phi|^2$, where
$\arity(\Phi)$ is the largest arity of any $\phi\in\Phi$ and
$|\Phi|$ is its cardinality.

\sol{We are considering identities of the form
\[
\phi(x_1,\ldots,x_k)=\psi(x_{k+1},\ldots,x_l),
\]
where the $x_i$'s are individual of nullary function variable  of
sort $\ind$ from the first $l$ such in a fixed list, choosing each
$x_i$ to be the unique, first variable in that list that can be
used. There are 2 possibilities foe each $x_i$ for a total of
$2^l\leq 2^{2\arity(\Phi)}$ possibilities for each pair
$\phi,\psi$, and there are $|\Phi|^2$ such pairs.}
\end{prob}

\begin{prob}
\label{14}

Prove that if $\phi(z_1,\ldots,z_k)$ is irreducible, $\f w$ is an
individual variable and $\A\models \f w = \phi(z_1,\ldots,z_k)$ for
some structure $\A$, then every $z_i$ is an individual variable
and $\f w$ is one of them.

\sol{For every $\sigma$, $\sigma(w)\in A$, so
it is not possible to have $\sigma(w)=\phi^\A(\sigma(z_1),\ldots,\sigma(z_k))$ if
$\phi^\A(\sigma(z_1),\ldots,\sigma(z_k))\diverges$. So the hypothesis implies
that $\phi^\A(\sigma(z_1),\ldots,\sigma(z_k))\converges$ for every $\sigma$,
and then every $z_i$ must be a variable, otherwise we can get a
$\sigma$ such that $\sigma(z_i)\diverges$. Finally, if $w$ is
different from all the $z_i$'s, we can choose $\sigma$ that
violates the hypothesis by setting all the $\sigma(z_i)$ and then
choosing $\sigma(w)$ to be different from
$\sigma(\phi(z_1,\ldots,z_k))$.
}
\end{prob}

\begin{prob}
\label{step1}
Prove~\eqref{globaldec1}
\[
\A\models\phi(z_1,\ldots,z_k) = \psi(w_1,\ldots,w_n)
\implies \phi(z_1,\ldots,z_k)\equiv\psi(w_1,\ldots,w_n),
\]
in Step 1 of the  proof of Theorem~\ref{globaldec}.
\end{prob}

\begin{prob}
\label{lemma1}
Prove Lemma 1 in the proof of Theorem~\ref{globaldec}.

\sol{
Since $E$ and $F$ are both irreducible and proper, the parts $E_i$
and $F_j$ are all in exactly one of the following forms:
\begin{enumerate}

\item[(1)] An individual variable $v$, a nullary function variable
$p$ of sort $\ind$, $\true$ or $\false$.

\item[(2)] A function variable application, $p(z_1,\ldots,z_k)$,
with immediate $z_i$ and such that either $p$ is of sort $\ind$,
or at least one $z_i$ is not a variable.

\item[(3)] A conditional, $\tif z_1~\tthen z_2~\telse z_3$,
with immediate $z_1,z_2,z_3$.

\item[(4)] An application of a primitive, $\phi(z_1,\ldots,z_k)$,
with immediate $z_i$ and such that either $\phi$ is of sort $\ind$
or $\phi$ is of boolean sort and $k>0$.

\item[(5)] A boolean constant $\gamma\in\Phi$.

\end{enumerate}
We check that for every combination i-j with $1\leq i\leq j\leq 5$
except for 55,
\[
\A\models E_i=F_i \implies E_i\equiv F_i.
\]

1-1, 1-2 and 1-3 are trivial.

1-4. Consider first the case $v=\phi(z_1,\ldots,z_k)$. This
is well formed only when $\sort(\phi)=\ind$, and then it fails for
any total assignment which gives $\phi(a_1,\ldots,a_k)= b$ for
some $a_1,\ldots,a_k, b$ contradicting the fact that (by the
construction) $\phi^\A$ is an injection and not the
identity---because it takes values in $R_\phi\subsetneq A$. The
other possibilities are equally easy.

1-5. No well formed identity of this form holds in $\A$,
because we have set $\gamma^\A\diverges$.

2-2, $p(z_1,\ldots,z_k) = q(z_{k+1},\ldots,z_l)$ with
immediate $z_i$ (and at least one of them on each side not a
variable if $p$ and $q$ are of boolean sort). This has nothing to
do with the specific structure $\A$ (except that $A$ is infinite),
and it is easy to check that its validity implies that
$p(z_1,\ldots,z_k) \equiv q(z_{k+1},\ldots,z_l)$ using
Lemma~\ref{declemma1}, \cf Problem~\ref{22}.

2-3, $p(z_1,\ldots,z_k)=\tif w_1~\tthen w_2~\telse w_3$ with
all $z_i$ and $w_j$ immediate and at least one $z_i$ not a
variable. The identity cannot be valid by
Lemma~\ref{irrthm}.

2-4, $p(z_1,\ldots,z_k) = \phi(w_1,\ldots,w_m)$. This
identity cannot be valid by Lemma~\ref{irrthm}: because if
$\A\models p(z_1,\ldots,z_k) = \phi(w_1,\ldots,w_m)$, then
$p(z_1,\ldots,z_k)$ is immediate, which is not  true if
$p(z_1,\ldots,z_k)$ is a part of the proper program $E$.

2-5, $p(z_1,\ldots,z_k) =\gamma$. This fails for every
total assignment which gives a convergent value to the
left-hand-side while $\gamma\diverges$.

3-3, $\tif z_1~\tthen z_2~\telse z_3 = \tif z_1~\tthen
z_2~\telse z_3$. By Lemma~\ref{irrthm}, this identity holds in
$\A$ only when the two sides are identical.

3-4 and 3-5, $\tif z_1~\tthen z_2~\telse z_3 =
\phi(w_1,\ldots,w_k)$, with $k=0$ for 3-5. By
Lemma~\ref{irrthm}, this identity never holds in $\A$.

4-4, $\phi(z_1,\ldots,z_k) = \psi(z_{k+1},\ldots,z_l)$.
Steps 1 and 2 of the construction insure~\eqref{globaldec1}, that
\[
\A\models\phi(z_1,\ldots,z_k) = \psi(z_{k+1},\ldots,z_l)
\implies \phi(z_1,\ldots,z_k) \equiv \psi(z_{k+1},\ldots,z_l),
\]
whether $\phi,\psi$ are of sort $\ind$ or they are boolean and
$0<k<l$.

4-5, $\phi(z_1,\ldots,z_k)=\gamma$. This identity is not
satisfied in $\A$, by any total assignment, which gives a convergent value
to the left-hand-side while $\gamma^\A\diverges$.}
\end{prob}

\begin{prob}
\label{22}

Prove Case (2-2) of Theorem~\ref{globaldec}, that
for immediate terms $z_1,\ldots, z_l$,
\[
\models
p(z_1,\ldots,z_k) = q(z_{k+1},\ldots,z_l)
\implies p(z_1,\ldots,z_k) \equiv q(z_{k+1},\ldots,z_l).
\]

\end{prob}

\begin{prob}
\label{cordecprob}
Derive (2) of Theorem~\ref{globaldec} from (1).

\sol{Every irreducible program $E$ is globally intensionally equal to
the proper irreducible $E'$ constructed by replacing each part
$E_i$ of Boolean sort and of the form $\f p_j(\vec{\f u})$ by
$E_i':\equiv \tif \f p_j(\vec{\f u})~\tthen \f p_j(\vec{\f
u})~\telse \f p_j(\vec{\f u})$.}
\end{prob}

\ysection{Propositional programs}
An explicit term $E$ is \textit{propositional} if it has no
individual variables, no symbols from $\Phi$ and only boolean, nullary
function variables,
\begin{equation}
\tag{Prop terms}
\label{propterms}
P :\equiv \true \mid \false \mid \f p^{\bool, 0}_i \mid
\tif~P_1~\tthen P_2~\telse P_3;
\end{equation}
and a program is \textit{propositional} if all its parts are, \eg
\begin{multline*}
\text{Liar} :\equiv \f p\where\Lbrace \f p=\tif \f p~\tthen\false~\telse\true\Rbrace,
\\
\text{Truthteller}:\equiv \f p\where\Lbrace
\f p=\tif \f p ~\tthen \true~\telse\false\Rbrace.
\end{multline*}

These are significant for the project of using referential
intensions to model \textit{meanings} initiated
in~\cite{ynmfrege,ynmlcms} which is far from our topic, but they
also bear on the complexity problem for intensional
equivalence.\smallskip

For our purposes here, a \textit{finite graph with $n>0$ nodes} is
a binary relation $E$ on the set $\{0,\ldots,n-1\}$, the
\textit{edge relation} on the set of nodes of the graph
$(\{0,\ldots,n-1\}, E)$ in more standard terminology.\footnote{To
the best of my knowledge (and that of Wikipedia), it is still open
as I write this whether the graph isomorphism problem is co-NP.}

\begin{dprob}[\cite{ynmfrege}, for a related language]
\label{intequivlowerbound}
Prove that the problem of intensional
equivalence between propositional programs is at least as hard as
the problem of isomorphism between graphs. \hint You need to
associate with each graph $E$ of size $n$ an irreducible
propositional program $\prog E$ which codes $E$, so that
\[
E \text{ is isomorphic with }F \iff \prog E \inteq{}\prog F \iff
\prog E\equiv_c \prog F;
\]
and the trick is to use propositional variables $\f p_i$, one for
each $i<n$ and $\f p_{ij}$, one for each pair $(i,j)$ with $i,j<n$.

\sol{Given a finite graph $E$ with $n>0$ nodes,
choose distinct propositional variables $r$, $p_i$ for each
$i<n$ and $p_{ij}$ for every pair $(i,j)$ with $i,j<n$, and let
\[
\prog{E}:\equiv \true\where\Lbrace \{p_i=p_i \st i<n\},
\{p_{ij}=E_{ij} \st i,j<n\}, r=\false\Rbrace
\]
be the irreducible propositional program with $n+n^2+2$ parts, where
\begin{multline*}
\text{if $E(i,j)$, then }  E_{ij}:\equiv \tif p_i~\tthen p_j~\telse r,
\\
\text{ and if $\lnot E(i,j)$, then } E_{ij}:\equiv \tif p_i~\tthen r ~\telse p_j.
\end{multline*}
By Part (1) of Lemma~\ref{irrthm}, for any $i,j,i',j'$
\[
\models E_{ij}=E_{i'j'} \iff E_{ij}\equiv E_{i'j'}.
\]

(1) For any two graphs $E,F$ with $n$ nodes,
\[
\text{if $E$ is isomorphic with $F$, then
$\prog{E}\equiv_c\prog F$, hence $\prog{E}\inteq{} \prog{F}$}.
\]

\textit{Proof}. To avoid confusion, assume that we have used
different variables $q_i, q_{ij}$ to construct $\prog{F}$ (which
we can insure by passing to a congruent program if necessary), and
to facilitate invocations of Lemma~\ref{recprogiso} (which is the
key here), put also
\[
\vec p = \{ p_i \st i<n\}\cup\{p_{ij}\st i,j<n\}
\]
and similarly for $\vec q$; by $\{\vec q:\equiv\vec p\}$ we will mean
the simultaneous replacement of every $q_i$ by $p_i$ and every
$q_{ij}$ by $p_{ij}$.

An isomorphism of $E$ with $F$ is any bijection
\[
\sigma:\{0,\ldots,n-1\}\bij\{0,\ldots,n-1\},
\]
such that
\[
E(i,j)\iff F(\sigma(i),\sigma(j)) \quad (i,j<n).
\]
We extend $\sigma$ to the pairs $(i,j)$ in the obvious way
\[
\sigma(ij)=\sigma(i)\sigma(j),
\]
and we claim that this extended map establishes that
$\prog{E}\inteq{}\prog{F}$. By Lemma~\ref{recprogiso}, we need to
check the following, where $\rho=\sigma^{-1}$ is the inverse of
$\sigma$.\smallskip

(i) $\models E_0 = F_0\{\vec q:\equiv\vec p\}$, which is trivial,
since $E_0\equiv F_0\equiv \true$.\smallskip

(ii) For every $i<n$, $\models E_i = F_{\sigma(i)}\{\vec q:\equiv
\rho(\vec p)\}$; which is again obvious, since
\[
F_{\sigma(i)}\{\vec q:\equiv \rho(\vec p)\}
\equiv q_{\sigma(i)}\{\vec q:\equiv \rho(\vec p)\}
\equiv p_{\rho(\sigma(i)}\equiv p_i\equiv E_i.
\]

(iii) For all pairs $i,j<n$, $\models E_{ij}=F_{\sigma(i)\sigma(j)}
\{\vec q:\equiv \rho(\vec p)\}$. For this we must take cases, on whether
$E(i,j)$ holds or not.\smallskip

If $E(i,j)$, then also $F(\sigma(i),\sigma(j))$, and
\begin{multline*}
F_{\sigma(i)\sigma(j)}\{\vec q:\equiv \rho(\vec p)\}
\equiv
\Big(\tif q_{\sigma(i)}~\tthen q_{\sigma(j)}~\telse r\Big)
\{\vec q:\equiv \rho(\vec p)\}\\
\equiv
\tif p_{\rho\sigma(i)}~\tthen p_{\rho\sigma(j)}~\telse r
\equiv \tif p_i~\tthen p_j~\telse r \equiv E_{ij}.
\end{multline*}

The argument for the case $\lnot E(i,j)$ is almost identical.
\hfill\qedsymbol\smallskip

(2) For any two graphs $E,F$ with $n$ nodes,
\[
\text{if $\prog{E}\inteq{} \prog{F}$, then $E$ is isomorphic with }F.
\]

\textit{Proof}. We are given a bijection
\[
\sigma : \{i\st i<n\}\cup\{ij\st i,j<n\}\bij
\{i\st i<n\}\cup\{ij\st i,j<n\}
\]
which establishes the hypothesis (if we also associate the last part
$r$ in $\prog{E}$ with the same part in $\prog{F}$), and we let $\rho=\sigma^{-1}$
be its inverse.\smallskip

(i) For each $i<n$, $\sigma(i)<n$ (\ie we cannot have
$\sigma(i)=ab$ for some $a,b<n$).

This is because if, towards a contradiction, $\sigma(i)=ab$ for
some $i,a,b$, then the assumption about $\sigma$ gives that
\[
\models E_i=F_{ab}\{\vec q:\equiv\rho(\vec p) \},
\]
and if, for example, $F(a,b)$, then this gives
\[
\models p_i = \Big(\tif q_a~\tthen q_b~\telse r\Big)
\{\vec q:\equiv\rho(\vec p) \}
\equiv \tif p_{\rho(a)}~\tthen p_{\rho(b)}~\telse r,
\]
which is not true no matter what $\rho(a), \rho(b)$ are.\smallskip

It follows that (the restriction to $\{0,\ldots,n-1\}$ of)
$\sigma$ is a bijection, and all we need to check is that it is an
isomorphism.\smallskip

(ii) For all $i,j<n$, $\sigma(ij)=\sigma(i)\sigma(j)$.
We know from (i) that $\sigma(ij)=ab$ for some $a,b$, and assuming
that $F(a,b)$ and $E(i,j)$, we have
\[
\models E_{ij} = F_{ab}\{\vec q:\equiv\rho(\vec p) \}
\]
which translates to
\begin{multline*}
\models \tif p_i~\tthen p_j~\telse r = \Big(\tif q_a~\tthen q_b~\telse r\Big)
\{\vec q:\equiv\rho(\vec p) \} \\
= \tif p_{\rho(a)}~\tthen p_{\rho(b)}~\telse r;
\end{multline*}
now Part (1) of Lemma~\ref{irrthm} gives that $i\equiv \rho(a)$, so $\sigma(i)=\sigma(\rho(a)=a$,
and similarly $\sigma(j)=b$.\smallskip

The same argument gives $\sigma(j)=\sigma(i)\sigma(j)$ in all four
possibilities of whether $E(i,z)$ and $F(a,b)$ hold; and then (ii)
implies very easily that $\sigma$ is an isomorphism, by repeated
applications of Part (1) of Lemma~\ref{irrthm} again.}
\end{dprob}

\def\test{}  
\def\dries{} 
\newcommand\fp{$\overline{\mathbb{F}}p$}
\bibliographystyle{ynmasl2}
\bibliography{ynmbasic}

\end{document}